\def\blfootnote{\xdef\@thefnmark{}\@footnotetext}
\newcommand\ccnote{
    \blfootnote{\copyright\,\, Alessandro Carlotto and Mario B. Schulz}
    \blfootnote{\ccLogo\, \ccAttribution\,\, Licensed under a \href{https://creativecommons.org/licenses/by/4.0/}{Creative Commons Attribution License (CC-BY)}.}
}
\numberwithin{equation}{section}
\renewcommand{\leq}{\leqslant}
\renewcommand{\geq}{\geqslant}
\renewcommand{\mathbb}{\varmathbb}
\newtheorem{theorem}{Theorem}[section]
\newtheorem{lemma}[theorem]{Lemma}
\theoremstyle{remark}
\newtheorem{remark}[theorem]{Remark}
\newtheorem{conj}[theorem]{Conjecture} 
\providecommand{\R}{\mathbb{R}}
\providecommand{\N}{\mathbb{N}}
\providecommand{\Sp}{\mathbb{S}}
\providecommand{\st}{\, :\ }
\DeclarePairedDelimiter\abs{\lvert}{\rvert}
\DeclarePairedDelimiter\interval{]}{[}
\DeclarePairedDelimiter\Interval{[}{[}
\DeclarePairedDelimiter\intervaL{]}{]}
\DeclarePairedDelimiter\IntervaL{[}{]}
\DeclareMathOperator{\Ogroup}{O}
\DeclareMathOperator{\SOgroup}{SO}
\DeclareMathOperator{\SUgroup}{SU}
\tikzset{
    scale plot marks/.is choice,
    scale plot marks/true/.style={},	
    scale plot marks/false/.code={
        \def\pgfuseplotmark##1{\pgftransformresetnontranslations\csname pgf@plot@mark@##1\endcsname}
    },
every mark/.append style={scale plot marks=false},
plus/.style={mark=+,mark size=2.25pt},
vdash/.style={mark=|,mark size=2.25pt},
hdash/.style={mark=-,mark size=2.25pt},
bullet/.style={mark=*,mark size=1.125pt},
trajectory/.style={semithick},
torustrajectory/.style={semithick,color={cmyk,1:magenta,0.5;cyan,1}},
spheretrajectory/.style={semithick,color={cmyk,1:magenta,0.5;yellow,1}},
}
\address{Alessandro Carlotto, Universit\`{a} di Trento,
Dipartimento di Matematica,
via Sommarive 14,
38123 Povo di Trento,
Italy}
\email{alessandro.carlotto@unitn.it}
\address{Mario B. Schulz, University of M\"unster, 
Mathematisches Institut, 
Einsteinstrasse 62,
48149 M\"unster,
Germany} 
\email{mario.schulz@uni-muenster.de} 
\begin{document}

\thispagestyle{empty}

\begin{minipage}{0.28\textwidth}
\begin{figure}[H]
\includegraphics[width=2.5cm,height=2.5cm,left]{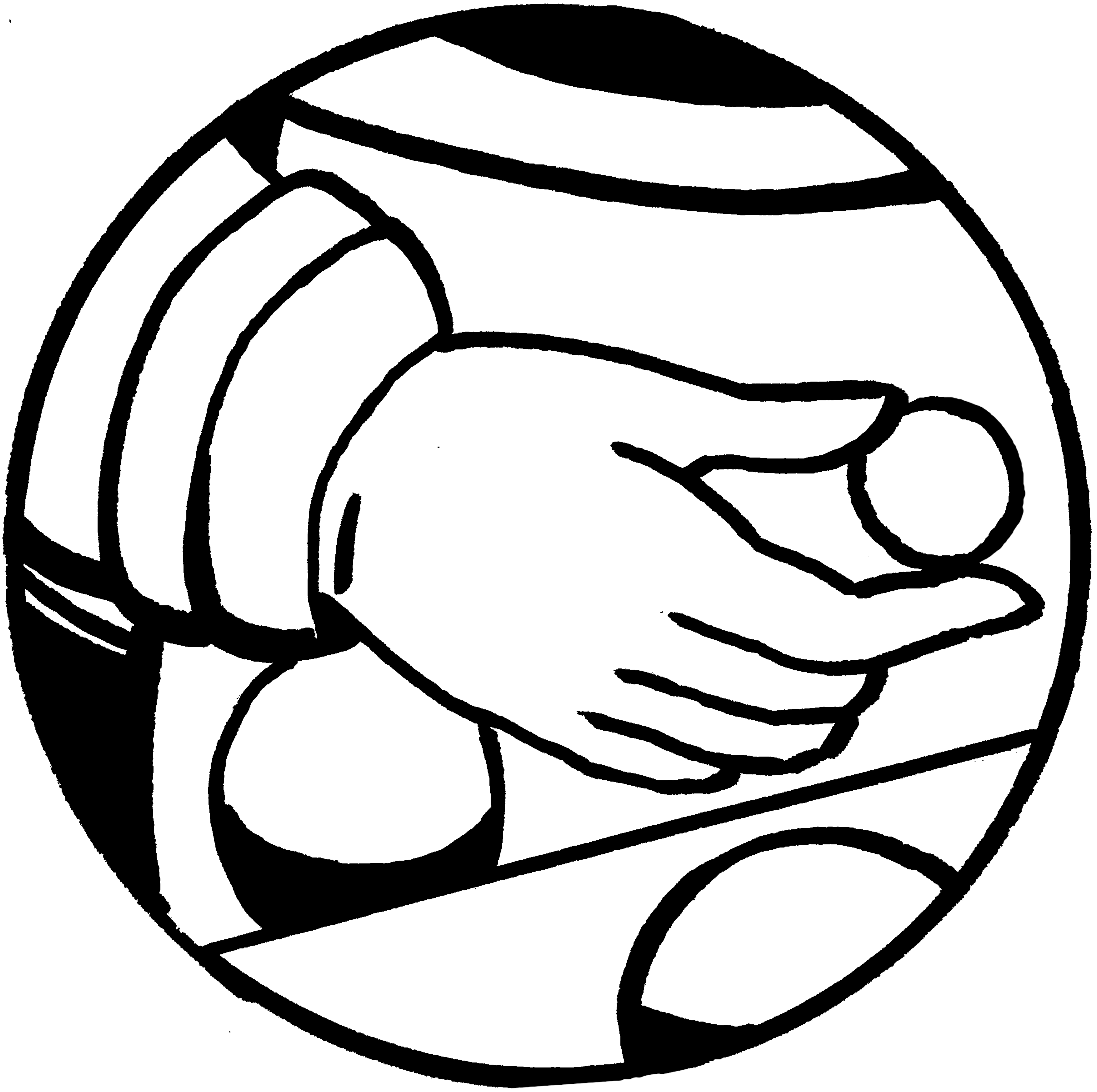}
\end{figure}
\end{minipage}
\begin{minipage}{0.7\textwidth} 
\begin{flushright}
Ars Inveniendi Analytica (2023), Paper No. 8, 33 pp.
\\
DOI 10.15781/gtrt-v523
\\
ISSN: 2769-8505
\end{flushright}
\end{minipage}

\ccnote

\vspace{1cm}


\begin{center}
\begin{huge}
\textit{Minimal hypertori in the four-dimensional sphere}
\end{huge}
\end{center}

\vspace{1cm}


\begin{minipage}[t]{.45\textwidth}
\begin{center}
{\large{\bf{Alessandro Carlotto}}} \\
\vskip0.15cm
\footnotesize{Universit\`{a} di Trento}
\end{center}
\end{minipage}
\hfill
\noindent
\begin{minipage}[t]{.45\textwidth}
\begin{center}
{\large{\bf{Mario B. Schulz}}} \\
\vskip0.15cm
\footnotesize{University of M\"unster}
\end{center}
\end{minipage} 

\vspace{1cm}


\begin{center}
\noindent \em{Communicated by Sigurd Angenent}
\end{center}
\vspace{1cm}


\noindent \textbf{Abstract.} \textit{We prove that the four-dimensional round sphere contains a minimally embedded hypertorus, as well as infinitely many, pairwise non-isometric, immersed ones. Our analysis also yields infinitely many, pairwise non-isometric, minimally embedded hyperspheres and thus provides a self-contained solution to Chern's spherical Bernstein conjecture in dimensions four and six.}
\vskip0.3cm

\noindent \textbf{Keywords.} minimal hypersurfaces, spherical Bernstein problem. 
\vspace{0.5cm}


\section{Introduction}\label{sec:intro}

Lawson \cite{Lawson1970} proved in 1970 that closed surfaces of any orientable topological type can be minimally embedded in the three-dimensional round sphere, and that, for non-orientable types, there always exists a minimal immersion with the sole exception of $\R\mathbb{P}^2$. For what concerns the former case, such minimal embeddings are known to be non-unique for surfaces of genus $g$ that is not prime, while uniqueness theorems are only at disposal when $g=0$ (indeed by an Hopf differential type argument due to Almgren \cite{Alm66}, and Calabi \cite{Calabi1967}) and when $g=1$ (by much more recent work of Brendle, see \cite{Brendle2013}).
It is therefore quite surprising that, if we look at higher-dimensional spheres, remarkably little is known: we simply lack any sort of characterisation of those $m$-dimensional manifolds that can be minimally embedded, or minimally immersed, in $\Sp^{m+1}$ (a notation we shall henceforth adopt to denote the standard, round sphere in $\R^{m+2}$).

A particularly interesting case is that of the four-dimensional sphere. To the best of our knowledge, till about a decade ago only \emph{three} topological types were known to enjoy such a property: the three-dimensional sphere (which obviously admits a totally geodesic realisation), the product $S^2\times S^1$ (for which one can consider the corresponding Clifford-type hypersurface) and a quotient of $\SOgroup(3)$ by a suitable action of the group $\mathbb{Z}_2\times \mathbb{Z}_2$ (the minimal embedding corresponds to one of the so-called Cartan isoparametric hypersurfaces, cf. \cite{KiNak87} and \cite{Sol90}). In fact, as we shall explain in the sequel of this introduction, the first two out of these three topological examples are actually known to be minimally embeddable in $\Sp^4$ in infinitely many, pairwise non-isometric, different ways (see \cites{Hsiang1983, Hsiang1987}). 

In recent years, we have witnessed the impressive development of the min-max theory for the area functional, which has led (among other things) to the positive solution of Yau's conjecture asserting the existence of infinitely many closed minimal hypersurfaces in any compact manifold without boundary, possibly accounting for a singular set in ambient dimension larger than seven, see in particular \cite{MarNev17}, \cite{IriMarNev18}, \cite{MarNevSon19}, \cite{LioMarNev18} and \cite{Son18}. One should note that it follows from work of Zhou \cite{Zhou20} on the so-called multiplicity one conjecture, see Theorem B therein, that every Riemannian manifold, of dimension less than eight and positive Ricci curvature, contains a sequence of closed minimal hypersurfaces with areas diverging as prescribed by the Weyl law for the corresponding min-max spectrum. This conclusion obviously applies, for instance, to the case of $\Sp^4$, however it is well-known that these variational methods \emph{do not}, except for very few special cases, provide topological control of the minimal hypersurfaces they produce, and thus it is still unknown whether such aforementioned hypersurfaces do indeed exhibit increasing topological complexity.

All that said, we wish to present here some advances on the question we stated above, specified to the most fundamental case of higher-dimensional tori (henceforth simply referred to as \emph{hypertori}), i.\,e. products of the form $T^m=S^1\times\ldots \times S^1$ of $m\geq 2$ copies of $S^1$. 
In particular, we shall prove that there exists a minimal embedding of $T^3$ into the round four-dimensional sphere, as well as infinitely many, pairwise non-isometric, immersed ones. 
As we will better explain below, when contextualising our results and methodology, this seems to be the first construction of a minimal embedding of $T^m$ into $\Sp^{m+1}$ for any $m\geq 3$: in other words, prior to our work the only known minimal embedding of an $m$-dimensional torus into a round $(m+1)$-dimensional sphere was the standard Clifford torus in $\Sp^3$. In particular, we hereby provide an affirmative answer to the (somewhat more general) question posed by Choe and Fraser in \cite{ChoeFra18}.
To avoid misunderstandings, we wish to stress that all immersions we study in the present article happen in codimension equal to one: if one allows for higher codimension, then on the one hand cheap examples exist in abundance and, on the other hand, one faces a much richer theory (we refer the reader to e.\,g. the seminal paper \cite{Bryant1982} by Bryant).
Actually, the results claimed above follow, as special cases (for $n=2$), from the combination of two somewhat more general statements. The first one reads as follows:

\begin{theorem}\label{thm:main}
For any $2\leq n\in\N$ there exists a minimal embedding of $S^{n-1}\times S^{n-1}\times S^1$ in $\Sp^{2n}$.  
\end{theorem}

We will later elaborate about why we expect the minimal embedding of $T^3$ in the round four-dimensional sphere (the case $n=2$ in the statement) to be unique up to ambient isometries, see Conjecture \ref{conj:unique}; however it is a matter of fact that if one allows for \emph{immersions} then the landscape changes quite a lot, as witnessed by this second main result:

\begin{theorem}\label{thm:mainImmersed}
Let $n\in\{2,3\}$. Then, there exist infinitely many, pairwise non-isometric, minimal immersions of $S^{n-1}\times S^{n-1}\times S^1$ in $\Sp^{2n}$.
\end{theorem}

We remark that, in the previous statement and throughout this paper, two (possibly immersed) hypersurfaces are said isometric if they are related by an ambient isometry; some authors would rather employ the word \emph{congruent} for this purpose, although the terminology is not completely standard.

Before adding some significant, additional remarks on these two statements we need to briefly digress on the methodology employed in their proofs. In short, we employ equivariant techniques (along the lines pioneered in \cite{HsiLaw71}), i.\,e. we consider a suitable isometric group action on $\R^{2n+1}$ and study the reduced equation one obtains in the quotient space $\Sp^{2n}/G$; more specifically, the group $G$ is the product $\Ogroup(n)\times\Ogroup(n)$ and we regard $\R^{2n+1}=\R^n\times\R^n\times \R$ (namely: we consider the representation $\rho_n\oplus\rho'_n\oplus\textbf{1}$ that is the outer direct sum of the standard representations of $\Ogroup(n)$ as a group acting on $\R^n$). 
The idea of considering this group action, and the corresponding reduced minimal surface equation, is very natural and was indeed successfully employed in 1983 by Hsiang \cite{Hsiang1983} to disprove (for spheres of dimension $4$ and $6$) Chern's spherical Bernstein conjecture, namely to answer the question whether a minimal embedding of the $m$-dimensional sphere into $\Sp^{m+1}$ should necessarily be totally geodesic (hence a standard equator). 
As recalled at the beginning of this introduction, a classical result obtained independently in \cite{Alm66} and \cite{Calabi1967} provides an affirmative answer for $m=2$, so it was quite notable, at least in certain respects, that \emph{infinitely many counterexamples} (i.\,e. infinitely many non-equatorial embeddings) can be obtained in higher-dimension. 

At a technical level, Hsiang's work amounts to proving the existence of certain trajectories for a planar dynamical system of second order (which we will describe in Section \ref{sec:setup}). 
One of the reasons why this is not a trivial task is that the ODE in question becomes singular at the boundary of the quotient space, and indeed Hsiang's construction relies on earlier work \cite{HsiHsi82} where the authors prove a local existence and uniqueness theorem which applies to the problem in question, thereby allowing for a \emph{singular shooting method}. Secondly, Hsiang crucially exploits a blow-up trick, which allows to bypass some analytic challenges of the problem, by ultimately appealing to a pre-existing, important result by Bombieri--De Giorgi--Giusti \cite{BdGG69} concerning the corresponding limit problem in $\R^m$, which happens to be reducible to a first-order planar ODE due to its scaling invariance.

Now, the conclusion of Theorem \ref{thm:main} comes from proving the existence of \emph{closed periodic orbits} for the reduced minimal surface equation and, as such, is patently \emph{not tractable} with Hsiang's methods. Such orbits, if any, keep away from the boundary of the quotient space and thus one cannot shoot from there, and also our problem cannot be tackled through blow-up shortcuts. That being said, we took a different path: we recasted the original equation as a $3 \times 3$ ODE system (thus to a non-planar problem) and developed a careful study of various classes of solutions, which ultimately led to the result by means of a suitable continuity argument. Such an analysis requires a priori estimates that turn out to be rather subtle, and yet the resulting outcome is a rather direct, self-contained proof. In fact, the proof of Theorem \ref{thm:mainImmersed} requires a significant amount of additional work, but forced us to derive some ancillary lemmata that immediately allow to gain, as a byproduct, a somewhat simpler proof of Hsiang's result:

\begin{theorem}\label{thm:mainHyperspheres}
Let $n\in\{2,3\}$. Then, there exist infinitely many, pairwise non-isometric, minimal, nonequatorial embeddings of $S^{2n-1}$ in $\Sp^{2n}$. 
\end{theorem}

Concerning both Theorem \ref{thm:mainImmersed} and \ref{thm:mainHyperspheres} we wish to add a comment about the dimensional assumptions: ample numerical evidence seems to suggest that the infinitely many trajectories corresponding to such minimal immersions of $S^{n-1}\times S^{n-1}\times S^1$ and minimal embeddings of hyperspheres, respectively, cease to exist when $n\geq 4$, and thus lead us to believe that our results should be sharp in that respect as well. These phenomena are described at the end of Section \ref{sec:immersed}, see also Conjecture \ref{conj:n>3} therein.

Getting back to hypertori, we would like to mention what follows: in the concluding remarks at page 550 of \cite{HsiHsi80} it is claimed that the methods developed there would also allow to prove the existence of minimal embeddings of $S^{n-1}\times S^{n-1}\times S^{n-1}\times S^1$ in the round sphere $\Sp^{3n-1}$ in $\R^{3n}$ for any $n\geq 2$ and, thus, that such an argument would also provide (in the sole case $n=2$) a minimal embedding of $T^4$ in $\Sp^5$. Although there are indeed some formal similarities between such a problem and those explicitly studied in \cite{HsiHsi80} (i.\,e. in the symmetric spaces $\SUgroup(3)/\SOgroup(3)$, $\SUgroup(3)$,  $\SUgroup(6)/\operatorname{Sp}(3)$ and $E_6/F_4$), and the structure of the orbit space is analogous (the quotient space is a `triangular domain' which, by virtue of symmetry arguments, can be split into six pairwise isometric subdomains, where one needs to construct a free boundary arc of prescribed geodesic curvature) we note that the ODE analysis is actually \emph{not} carried through there, nor in any later article we are aware of. For this reason, and for the sake of completeness, we show in Appendix \ref{sec:HsiangClaim} how some of the arguments we present in the first part of Section \ref{sec:embedded} can be modified, at modest additional cost, so to confirm the existence of a minimal embedding of $T^4$ in $\Sp^5$; as the reader will see, this result is arguably much simpler to prove than Theorem \ref{thm:main}. 
In any event, the authors of \cite{HsiHsi80} also wrote `\emph{One does not know whether there are minimal imbeddings of codimension-one torus in $\Sp^m$ for $m\neq 3,5$}' and, to our knowledge, our result is the first advance on this matter since 1980.

\bigskip\noindent\textbf{Acknowledgements.} The authors would like to thank Lucas Ambrozio, Renato Bettiol and Ben Sharp for a number of stimulating conversations, and to the anonymous referees for their valuable suggestions. This project has received funding from the European Research Council (ERC) under the European Union’s Horizon 2020 research and innovation programme (grant agreement No. 947923). 
The research of M.\,S. was partly funded by the EPSRC grant EP/S012907/1 
and by the Deutsche Forschungsgemeinschaft (DFG, German Research Foundation) under Germany's Excellence Strategy EXC 2044 -- 390685587, Mathematics M\"unster: Dynamics--Geometry--Structure, and the Collaborative Research Centre CRC 1442, Geometry: Deformations and Rigidity.

\section{Setup and preliminaries}\label{sec:setup}

Let $2\leq n\in\N$. 
The group $G=\Ogroup(n)\times\Ogroup(n)$ acts on the sphere $\Sp^{2n}$ via the representation $\rho_n\oplus\rho_n'\oplus \textbf{1}$. 
The quotient $\Sp^{2n}/G$ is equipped with spherical coordinates 
$(r,\theta)\in[0,\pi]\times[0,\frac{\pi}{2}]$, and the orbital distance metric 
\begin{align*}
g=dr^2+(\sin r)^2d\theta^2.
\end{align*}
We recall e.\,g. from \cite{HsiLaw71} that a curve $s\mapsto\gamma(s)=(r(s),\theta(s))$ in $\Sp^{2n}/G$ parametrised by arc-length lifts to a minimal hypersurface in $\Sp^{2n}$ if and only if it has vanishing geodesic curvature in the conformal metric $V^2g$ where $V=V(r,\theta)$ stands for the volume of the pre-image of the point $(r,\theta)$, so in our case $V(r,\theta)=\omega^{2}_{n-1} \sin^{2n-2}(r)\sin^{n-1}(2\theta)/2^{n-1}$ where $\omega_{n-1}$ denotes the volume of the unit sphere in $\R^n$. As a result, since with respect to the unit normal
\[
\eta=-\sin(r)\frac{d\theta}{ds}\frac{\partial}{\partial r}+\frac{1}{\sin(r)}\frac{dr}{ds}\frac{\partial}{\partial\theta}
\]
the geodesic curvature in metric $g$ of the curve in question equals
\[
\kappa_g=\frac{d\alpha}{ds}+\cos(r)\frac{d\theta}{ds}
\]
the well-known equation $\kappa_{V^2g}=\kappa_g-\nabla_{\eta}\ln V$ implies that the lift is a $G$-equivariant minimal hypersurface if and only if
\begin{align}\label{eqn:ode}
\frac{d\alpha}{ds}-\frac{(2n-2)\cot(2\theta)}{\sin(r)}\frac{dr}{ds}+(2n-1)\cos(r)\frac{d\theta}{ds}=0, 
\end{align}
where $\alpha$ is the (signed) angle between the vectors $d\gamma/ds$ and $\partial/\partial r$; to avoid ambiguities we agree that $\alpha=\pi/2$ (respectively $\alpha=-\pi/2$) for curves of the form $s\mapsto (r_0, s)$ (respectively $s\mapsto (r_0, -s)$) for any fixed $r_0\in\interval{0,\pi}$.

Parametrisation by arc-length implies    
\begin{align}\label{eqn:constraint}
1&=g\biggl(\frac{d\gamma}{ds},\frac{d\gamma}{ds}\biggr)=\biggl(\frac{dr}{ds}\biggr)^2
+(\sin r)^2\biggl(\frac{d\theta}{ds}\biggr)^2, &
\cos(\alpha)&=\frac{dr}{ds}, & 
\sin(\alpha)&=(\sin r)\frac{d\theta}{ds}. 
\end{align}
We wish to rewrite the differential equation above as an autonomous $3\times 3$ ODE system, i.\,e. to have it in the form  $dU/ds=f(U)$ where $U$ varies in a suitable subset of $\R^3$. Indeed, we can take $U=(r,\theta,\alpha)$ and thus consider the system of equations 
\begin{align}
\label{eqn:dr/ds}
\frac{dr}{ds}&=\cos(\alpha), \\[1ex]
\label{eqn:dt/ds}
\smash{\left\{\vphantom{
\begin{aligned}
\frac{dr}{ds}&=\cos(\alpha), \\[1ex]
\frac{d\theta}{ds}&=\frac{\sin(\alpha)}{\sin(r)}, \\[1ex]
\frac{d\alpha}{ds}&=\frac{(2n-2)\cot(2\theta)}{\sin(r)}\cos(\alpha)-(2n-1)\cot(r)\sin(\alpha).
\end{aligned}}\right.}\,
\frac{d\theta}{ds}&=\frac{\sin(\alpha)}{\sin(r)}, \\[1ex]
\label{eqn:da/ds}
\frac{d\alpha}{ds}&=\frac{(2n-2)\cot(2\theta)}{\sin(r)}\cos(\alpha)-(2n-1)\cot(r)\sin(\alpha).
\end{align}

In Section \ref{sec:embedded} we will design a suitable shooting method to construct solutions of the system \eqref{eqn:dr/ds},\eqref{eqn:dt/ds},\eqref{eqn:da/ds} such that the $(r,\theta)$-trajectory is simple and periodic. Instead, in Section \ref{sec:immersed} we will obtain on the one hand periodic orbits (with suitable symmetries) and any odd number of self-intersections
and on the other hand central-symmetric trajectories, without self-intersections, reaching the boundary of the quotient space orthogonally. 
When lifted back to $\Sp^{2n}$ these curves provide the minimally immersed hypersurfaces and the minimally embedded hyperspheres claimed in the statements of Theorem \ref{thm:mainImmersed} and \ref{thm:mainHyperspheres}, respectively.

\section{Existence of a minimally embedded hypertorus}\label{sec:embedded}

This section is devoted to the proof of Theorem \ref{thm:main}, but also serves as a preparation for some of the results we will present in the following sections. 

\begin{lemma}[Well-posedness]\label{lem:well-posed}
Given any $r_0\in\interval{0,\pi/2}$ there exists $0<s_*<\infty$ such that the system \eqref{eqn:dr/ds},\eqref{eqn:dt/ds},\eqref{eqn:da/ds} has a unique solution 
\[
(r,\theta,\alpha)\colon[0,s_*]\to B\vcentcolon=\intervaL{0,\tfrac{\pi}{2}}\times\intervaL{0,\tfrac{\pi}{4}}\times[-\tfrac{\pi}{2},0]
\] with initial data $(r,\theta,\alpha)(0)=(r_0,\pi/4,-\pi/2)$ on which the solution depends continuously such that $r(s_*)=\pi/2$ or $\alpha(s_*)=0$; in either case $\theta(s_*)>0$. 
Moreover,  $dr/ds\geq0$, $d\theta/ds\leq0$ and $d\alpha/ds\geq0$.
\end{lemma}

\begin{proof}
As long as \((r,\theta,\alpha)\in B \), equations 
\eqref{eqn:dr/ds}--\eqref{eqn:da/ds} imply  
$dr/ds\geq0$, $d\theta/ds\leq0$ and $d\alpha/ds\geq0$ (i.\,e. the motion is weakly monotone in each of the three coordinates; note also that there are no stationary points and \eqref{eqn:constraint} holds).
Therefore, solutions with initial data $(r,\theta,\alpha)(0)=(r_0,\pi/4,-\pi/2)$ stay in $B$ at least for a short time and can leave $B$ only via $r=\frac{\pi}{2}$, $\theta=0$ or $\alpha=0$ (see Figure \ref{fig:shooting}). 

Equation \eqref{eqn:da/ds} implies $d\alpha/ds(0)=(2n-1)\cot(r_0)>0$ since we assume $r_0\in\interval{0,\pi/2}$. 
Then, by monotonicity of $\alpha$ along the motion, $\cot(\alpha)<0$ for any $s>0$. 
If the trajectory in question intersects $\{\theta=\pi/8\}$ at some $s_0\in\interval{0,s_*}$ we can find $\delta_0>0$ such that $\cot(\alpha)\leq-\delta_0$ for all $s\in\Interval{s_0,s_*}$. 
Equations \eqref{eqn:dt/ds} and \eqref{eqn:da/ds} then imply 
\begin{align*}
\frac{d\alpha}{ds}
&=\Bigl((2n-2)\cot(2\theta)\cot(\alpha)-(2n-1)\cos(r) \Bigr)\frac{d\theta}{ds}
\geq-(2n-2)\cot(2\theta)\delta_0\frac{d\theta}{ds} 
\end{align*} 
for all $s\in\Interval{s_0,s_*}$. 
Integrating this differential inequality over an interval $[s_0,s]$ gives 
    \[
    \frac{\pi}{2\delta_0}\geq \frac{\alpha(s)-\alpha(s_0)}{\delta_0}\geq (n-1) \log\left(\frac{\sin(\pi/4)}{\sin(2\theta(s))}\right).
    \]
This rough bound shows that the trajectory in question must stay at positive distance away from $\{\theta=0\}$. 
Hence, again appealing to its monotonicity, the trajectory will either exit $B$ through the roof (namely: reaching $\alpha=0$) or through the side (namely: reaching $r=\pi/2$). Of course, if the trajectory does not even reach $\theta=\pi/8$ then the same conclusions still hold true.
  
In particular, for any $r_0\in\interval{0,\pi/2}$ and any sufficiently small $\eta>0$, the image through the flow map of $[r_0-\eta,r_0+\eta]$ will stay at positive distance from the plane $\{\theta=0\}$: in that region, the vector field generating the flow (cf. right-hand side of \eqref{eqn:dr/ds},\eqref{eqn:dt/ds},\eqref{eqn:da/ds}) is smooth and uniformly bounded. Hence given any $\varepsilon>0$ one can find $\delta\in (0,\eta)$ such that trajectories emanating from initial conditions less than $\delta$ apart will leave $B$ at times less than $\varepsilon$ apart and will be $\varepsilon$-close in $C^{\infty}$ for all times of their motion (in $B$). Thereby the proof is complete.
\end{proof}

\begin{lemma}\label{lem:dipping_down}
Given $r_0\in\interval{0,\pi/8}$ let $(r,\theta,\alpha)\colon[0,s_*]\to B$ be the solution constructed in Lemma \ref{lem:well-posed} and let $s_1\in\intervaL{0,s_*}$ be arbitrary. 
If $r(s_1)\geq2r_0$ then 
\begin{align*}
\theta(s_1)<\frac{\pi}{4}-\frac{1}{6n}.
\end{align*}
\end{lemma}

\begin{proof}
Let $\delta=1/(6n)$ be fixed and assume there exists $s_1\in\intervaL{0,s_*}$ with $r(s_1)\geq 2r_0$. (Otherwise there is nothing to prove.)
Towards a contradiction, suppose $\theta(s_1)\geq\pi/4-\delta$. 
By monotonicity of $\theta$, we then have $\theta(s)\geq\pi/4-\delta$ for all $s\in[0,s_1]$. Hence, 
\begin{align*}
0\leq\cot(2\theta)&\leq\tan(2\delta)=\vcentcolon b, &
\cot(\alpha)&\leq0, &
0&\leq\cos(r)\leq1, & 
\frac{d\theta}{ds}&\leq0 
\end{align*}
for all $s\in[0,s_1]$. 
Equations \eqref{eqn:dt/ds} and \eqref{eqn:da/ds} then imply  
\begin{align}
\frac{d\alpha}{ds}
&=\Bigl((2n-2)\cot(2\theta)\cot(\alpha)-(2n-1)\cos(r) \Bigr)\frac{d\theta}{ds}
\label{eqn:da/dt}
\leq(2n-1)\Bigl(b\cot(\alpha)-1\Bigr)\frac{d\theta}{ds}.
\end{align}
We observe that the function $f\colon\interval{-\pi/2,0}\to\R$ given by $f(x)=(b\cot(x)-1)^{-1}$ is the derivative of
\begin{align*}
F(x)
&=-\frac{b\log\bigl(b\cos(x)-\sin(x)\bigr)+x}{b^2+1}.
\end{align*}
Multiplying estimate \eqref{eqn:da/dt} by $f(\alpha)<0$ and integrating from $s=0$ to $s_1$ we obtain 
\begin{align}\label{eqn:20210415}
F\bigl(\alpha(s_1)\bigr)-F\bigl(-\tfrac{\pi}{2}\bigr)
\geq(2n-1)\bigl(\theta(s_1)-\tfrac{\pi}{4}\bigr)\geq-(2n-1)\delta.
\end{align}
Now, the function $[-\pi/2,0]\ni x\mapsto b\cos(x)-\sin(x)$ is concave and 
is checked to attain its minimum $b$ at $x=0$. 
Hence,  
\begin{align*}
F(\alpha)&\leq\frac{-b\log(b)-\alpha}{b^2+1}, &
F(-\tfrac{\pi}{2})
&=\frac{\pi}{2(b^2+1)}
\end{align*}
so estimate \eqref{eqn:20210415} implies 
\(
-b\log(b)-\alpha(s_1)-\frac{\pi}{2}
\geq-(b^2+1)(2n-1)\delta
\)
which is equivalent to
\begin{align}\label{eqn:alpha(s1)}
\alpha(s_1)&\leq-b\log(b)+(b^2+1)(2n-1)\delta-\frac{\pi}{2}.
\end{align}
Recalling that we had set $b=\tan(2\delta)$, it is evident that if $\delta>0$ is sufficiently small, then \eqref{eqn:alpha(s1)} implies $\alpha(s_1)<-\pi/4$; in fact, one can check $\delta=1/(6n)$ is sufficient to that aim.  
By monotonicity of $\alpha$, we have  
$\alpha(s)<-\pi/4$ for all $s\in[0,s_1]$ and equations \eqref{eqn:dr/ds} and \eqref{eqn:dt/ds} then imply 
\begin{align}\label{eqn:dt/dr}
\frac{d\theta}{ds}
&=\frac{\tan(\alpha)}{\sin(r)}\frac{dr}{ds}
\leq \frac{-1}{\sin(r)}\frac{dr}{ds}.
\end{align}
A primitive of $\interval{0,\pi/2}\ni r\mapsto-1/\sin(r)$
is given by the function $-\log(\tan(r/2))$. 
Since by assumption $r(s_1)\geq 2r_0=2r(0)$, integrating estimate \eqref{eqn:dt/dr} from $s=0$ to $s_1$ yields
\begin{align}\label{eqn:20210918-ts1}
\theta(s_1)-\frac{\pi}{4}
\leq\log\biggl(\frac{\tan(r_0/2)}{\tan(r_0)}\biggr)\leq\log\biggl(\frac{1}{2}\biggl)<-\delta
\end{align}
which gives a contradiction to our initial assumption.
\end{proof}

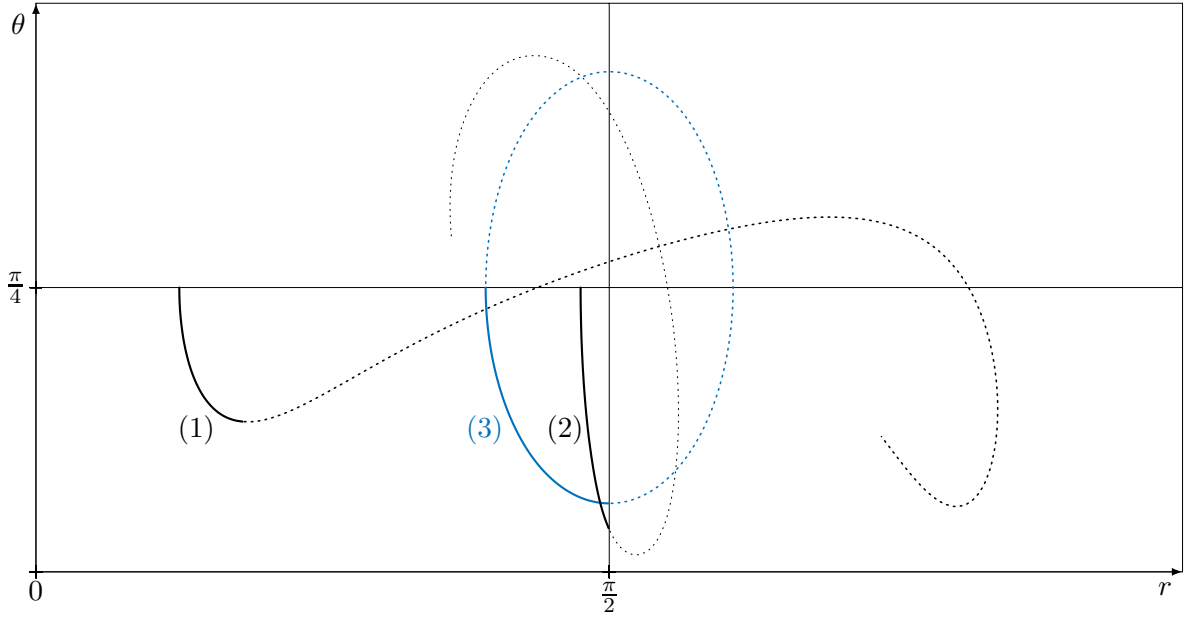
\begin{figure}\centering
\begin{tikzpicture}[line cap=round,line join=round,baseline={(0,0)},scale=\textwidth/3.5cm]
\draw[very thin](0,0)grid [xstep=pi/2,ystep=pi/4](pi,pi/2);
\draw[-latex](0,0)--(pi,0)node[below left]{$r$};
\draw[-latex](0,0)--(0,pi/2)node[below left]{$\theta$};
\draw plot[plus](0,0)node[below]{$0$};
\draw plot[plus](pi/2,0)node[below]{$\frac{\pi}{2}$};
\draw plot[plus](0,pi/4)node[left]{$\frac{\pi}{4}$};
\draw (0.44,pi/8)node{(1)};
\draw (1.45,pi/8)node{(2)};
\draw[torustrajectory](1.23,pi/8)node{(3)};
\draw[smooth,dotted,semithick]plot coordinates {
(0.5521,0.4168) (0.5824,0.4143) (0.6127,0.4173) (0.6428,0.4244) (0.6724,0.4345) (0.7017,0.4467) (0.7306,0.4604) (0.7593,0.4751) (0.7876,0.4904) (0.8157,0.5062) (0.8437,0.5220) (0.8715,0.5380) (0.8992,0.5539) (0.9268,0.5696) (0.9544,0.5852) (0.9820,0.6005) (1.0095,0.6155) (1.0371,0.6303) (1.0647,0.6447) (1.0923,0.6589) (1.1200,0.6727) (1.1477,0.6863) (1.1754,0.6995) (1.2032,0.7125) (1.2311,0.7252) (1.2590,0.7376) (1.2870,0.7497) (1.3151,0.7615) (1.3432,0.7731) (1.3715,0.7845) (1.3997,0.7955) (1.4281,0.8064) (1.4565,0.8169) (1.4850,0.8273) (1.5136,0.8374) (1.5423,0.8472) (1.5710,0.8569) (1.5998,0.8663) (1.6287,0.8754) (1.6577,0.8843) (1.6867,0.8930) (1.7159,0.9014) (1.7451,0.9095) (1.7744,0.9173) (1.8038,0.9249) (1.8333,0.9321) (1.8628,0.9390) (1.8925,0.9455) (1.9222,0.9516) (1.9521,0.9573) (1.9820,0.9625) (2.0120,0.9671) (2.0421,0.9712) (2.0722,0.9746) (2.1025,0.9773) (2.1327,0.9791) (2.1630,0.9800) (2.1933,0.9798) (2.2236,0.9784) (2.2538,0.9757) (2.2840,0.9714) (2.3139,0.9653) (2.3436,0.9572) (2.3730,0.9469) (2.4020,0.9339) (2.4304,0.9181) (2.4581,0.8991) (2.4849,0.8764) (2.5105,0.8494) (2.5348,0.8179) (2.5574,0.7816) (2.5779,0.7403) (2.5960,0.6942) (2.6112,0.6433) (2.6231,0.5878) (2.6312,0.5285) (2.6350,0.4668) (2.6339,0.4045) (2.6275,0.3439) (2.6153,0.2881) (2.5973,0.2405) (2.5735,0.2051) (2.5454,0.1850) (2.5153,0.1807) (2.4854,0.1897) (2.4573,0.2078) (2.4311,0.2315) (2.4065,0.2583) (2.3831,0.2867) (2.3605,0.3156) (2.3384,0.3448) (2.3166,0.3737) 
};
\draw[smooth,thick]plot coordinates {
(0.3927,0.7854) (0.3935,0.7461) (0.3960,0.7074) (0.4001,0.6700) (0.4056,0.6343) (0.4127,0.6008) (0.4210,0.5698) (0.4306,0.5417) (0.4413,0.5166) (0.4529,0.4945) (0.4654,0.4755) (0.4786,0.4593) (0.4924,0.4459) (0.5066,0.4352) (0.5213,0.4270) (0.5361,0.4211) (0.5511,0.4172) (0.5661,0.4151) 
};
\draw[smooth,torustrajectory,dotted,semithick]plot coordinates {
(1.5763,0.1891) (1.6054,0.1919) (1.6340,0.1990) (1.6614,0.2098) 
(1.6873,0.2241) (1.7114,0.2411) (1.7336,0.2604) (1.7542,0.2816) (1.7731,0.3044) (1.7905,0.3286) (1.8064,0.3538) (1.8210,0.3801) (1.8342,0.4071) (1.8463,0.4348) (1.8571,0.4632) (1.8669,0.4920) (1.8756,0.5214) (1.8833,0.5511) (1.8899,0.5811) (1.8956,0.6114) (1.9004,0.6420) (1.9042,0.6727) (1.9070,0.7036) (1.9090,0.7346) (1.9100,0.7657) (1.9101,0.7968) (1.9093,0.8279) (1.9076,0.8589) (1.9050,0.8898) (1.9014,0.9206) (1.8969,0.9512) (1.8915,0.9816) (1.8851,1.0117) (1.8777,1.0415) (1.8693,1.0710) (1.8598,1.1000) (1.8492,1.1285) (1.8375,1.1564) (1.8245,1.1836) (1.8103,1.2100) (1.7949,1.2356) (1.7779,1.2602) (1.7594,1.2835) (1.7392,1.3051) (1.7173,1.3249) (1.6936,1.3424) (1.6684,1.3573) (1.6416,1.3690) (1.6136,1.3772) (1.5847,1.3812) (1.5556,1.3811) (1.5267,1.3767) (1.4985,1.3682) (1.4713,1.3559) (1.4456,1.3403) (1.4219,1.3220) (1.4003,1.3019) (1.3804,1.2800) (1.3622,1.2566) (1.3454,1.2319) (1.3301,1.2062) (1.3161,1.1797) (1.3033,1.1523) (1.2918,1.1243) (1.2813,1.0958) (1.2720,1.0667) (1.2638,1.0373) (1.2565,1.0074) (1.2502,0.9773) (1.2449,0.9468) (1.2405,0.9162) (1.2371,0.8854) (1.2346,0.8545) (1.2330,0.8235) (1.2323,0.7924) 
};
\draw[smooth,torustrajectory,thick]plot coordinates {
(1.2321,0.7854) (1.2323,0.7699) (1.2326,0.7545) (1.2332,0.7390) (1.2340,0.7236) (1.2350,0.7082) (1.2363,0.6928) (1.2378,0.6774) (1.2395,0.6621) (1.2414,0.6468) (1.2436,0.6316) (1.2460,0.6164) (1.2486,0.6013) (1.2515,0.5862) (1.2546,0.5712) (1.2580,0.5563) (1.2616,0.5414) (1.2655,0.5267) (1.2696,0.5120) (1.2740,0.4974) (1.2786,0.4830) (1.2835,0.4686) (1.2887,0.4544) (1.2941,0.4403) (1.2998,0.4264) (1.3059,0.4126) (1.3122,0.3990) (1.3188,0.3856) (1.3257,0.3723) (1.3329,0.3593) (1.3405,0.3465) (1.3484,0.3339) (1.3567,0.3216) (1.3654,0.3096) (1.3744,0.2979) (1.3839,0.2865) (1.3938,0.2756) (1.4041,0.2651) (1.4148,0.2550) (1.4259,0.2454) (1.4375,0.2364) (1.4494,0.2280) (1.4618,0.2203) (1.4746,0.2133) (1.4877,0.2070) (1.5012,0.2016) (1.5151,0.1971) (1.5292,0.1936) (1.5436,0.1910) (1.5581,0.1895)
(pi/2,0.1891) 
};
\draw[smooth,dotted]plot coordinates {
(1.5599,0.1448) (1.5706,0.1175) (1.5839,0.0913) (1.6014,0.0679) (1.6250,0.0508) (1.6537,0.0476) (1.6798,0.0608) (1.6990,0.0830) (1.7132,0.1089) (1.7240,0.1365) (1.7325,0.1649) (1.7393,0.1938) (1.7448,0.2230) (1.7492,0.2525) (1.7528,0.2821) (1.7556,0.3119) (1.7577,0.3416) (1.7592,0.3714) (1.7602,0.4013) (1.7605,0.4311) (1.7604,0.4610) (1.7598,0.4908) (1.7588,0.5207) (1.7573,0.5505) (1.7554,0.5802) (1.7530,0.6100) (1.7502,0.6396) (1.7471,0.6693) (1.7435,0.6988) (1.7395,0.7283) (1.7351,0.7577) (1.7303,0.7870) (1.7251,0.8162) (1.7195,0.8453) (1.7135,0.8743) (1.7070,0.9032) (1.7002,0.9320) (1.6929,0.9606) (1.6851,0.9891) (1.6769,1.0174) (1.6682,1.0456) (1.6589,1.0735) (1.6492,1.1013) (1.6388,1.1287) (1.6278,1.1559) (1.6161,1.1828) (1.6037,1.2094) (1.5904,1.2356) (1.5762,1.2613) (1.5609,1.2864) (1.5444,1.3106) (1.5265,1.3339) (1.5070,1.3558) (1.4857,1.3761) (1.4624,1.3942) (1.4370,1.4090) (1.4095,1.4197) (1.3806,1.4253) (1.3510,1.4255) (1.3218,1.4200) (1.2945,1.4088) (1.2698,1.3929) (1.2475,1.3731) (1.2277,1.3502) (1.2101,1.3246) (1.1946,1.2971) (1.1813,1.2682) (1.1700,1.2386) (1.1605,1.2084) (1.1526,1.1776) (1.1463,1.1462) (1.1414,1.1145) (1.1378,1.0824) (1.1356,1.0502) (1.1346,1.0179) (1.1349,0.9855) (1.1363,0.9532) (1.1388,0.9210) 
};
\draw[smooth,thick]plot coordinates {
(1.4923,0.7854) (1.4923,0.7708) (1.4924,0.7561) (1.4925,0.7415) (1.4927,0.7269) (1.4930,0.7122) (1.4933,0.6976) (1.4936,0.6830) (1.4940,0.6684) (1.4944,0.6537) (1.4949,0.6391) (1.4955,0.6245) (1.4961,0.6099) (1.4968,0.5953) (1.4975,0.5807) (1.4982,0.5660) (1.4991,0.5514) (1.5000,0.5368) (1.5009,0.5223) (1.5019,0.5077) (1.5030,0.4931) (1.5041,0.4785) (1.5053,0.4639) (1.5066,0.4494) (1.5079,0.4348) (1.5094,0.4203) (1.5109,0.4057) (1.5124,0.3912) (1.5141,0.3767) (1.5159,0.3623) (1.5178,0.3478) (1.5198,0.3333) (1.5218,0.3189) (1.5240,0.3045) (1.5262,0.2900) (1.5286,0.2756) (1.5311,0.2612) (1.5337,0.2468) (1.5366,0.2324) (1.5396,0.2181) (1.5428,0.2038) (1.5463,0.1897) (1.5501,0.1756) (1.5543,0.1616) (1.5588,0.1477) (1.5638,0.1340) (1.5692,0.1205)  
};
\end{tikzpicture}
\caption{Implementing the shooting method:
Trajectory (1) exists $B$ through the ``roof'' $\{\alpha=0\}$. 
Trajectory (2) exits $B$ through the ``side'' $\{r=\pi/2\}$. 
Trajectory (3) is the desired curve. 
}%
\label{fig:shooting}%
\end{figure} 

\begin{lemma}[Exit through the roof 
$\{\alpha=0\}$, 
see Figure \ref{fig:shooting}\;(1)]\label{lem:roof}
There exists a constant $c_n>0$ depending only on $2\leq n\in\N$ such that if the initial data $r_0\in\interval{0,\pi/2}$ is sufficiently small, then the solution constructed in Lemma~\ref{lem:well-posed} satisfies $\alpha(s_*)=0$ and $r(s_*)\leq c_n r_0$. 
\end{lemma}

\begin{proof}
Let $r_0\in\interval{0,\pi/8}$ be arbitrary. 
We may assume $r(s_*)>2r_0$, otherwise the claim follows directly. 
Let $0<s_1<s_*$ such that $r(s_1)=2r_0$. 
Then, 
\begin{align*}
\theta(s)<\frac{\pi}{4}-\frac{1}{6n}
\end{align*}
 for all $s\in[s_1,s_*]$ by Lemma \ref{lem:dipping_down} and by monotonicity of $\theta$. 
This implies $\cot(2\theta)>\tan(1/(3n))$ for all $s\in[s_1,s_*]$ and we obtain 
\begin{align}\label{eqn:da/ds-estimate}
\frac{d\alpha}{ds}
&=\Biggl(\frac{(2n-2)\cot(2\theta)}{\sin(r)} -(2n-1)\cot(r)\tan(\alpha)\Biggr)\frac{dr}{ds}
\geq\frac{(2n-2)}{r}\tan\biggl(\frac{1}{3n}\biggr)\frac{dr}{ds}
\end{align}
for all $s\in[s_1,s_*]$. 
Integrating \eqref{eqn:da/ds-estimate} yields
\begin{align}\label{eqn:alpha(s)}
\frac{\pi}{2}\geq \alpha(s_*)-\alpha(s_1)&\geq(2n-2)\tan\biggl(\frac{1}{3n}\biggr)\log\biggl(\frac{r(s_*)}{2r_0}\biggr)
\end{align}
which implies 
\[r(s_*)\leq2r_0\exp\Biggl(\frac{\pi}{4n-4}\cot\biggl(\frac{1}{3n}\biggr)\Biggr)=\vcentcolon c_n r_0\]
 and thus proves the second claim. 
If the initial value $r_0>0$ is chosen such that $c_n r_0<\pi/2$ then $r(s_*)<\pi/2$ and we obtain $\alpha(s_*)=0$ from Lemma~\ref{lem:well-posed}. 
\end{proof}

\begin{lemma}[Exit through the side
$\{r=\pi/2\}$, 
see Figure \ref{fig:shooting}\;(2)]\label{lem:side}
If the initial data $r_0\in\interval{0,\pi/2}$ is sufficiently close to $\pi/2$, then the solution constructed in Lemma \ref{lem:well-posed} satisfies $r(s_*)=\pi/2$. 
\end{lemma}

\begin{proof}
Our task amounts to finding  initial data such that $\alpha(s_*)<0$. 
We perform the following change of variables, which turns out to be especially useful in analysing the motion for initial data $r_0\in\interval{0,\pi/2}$ close to $\pi/2$. 
\begin{align*}
x&=\tan(r), & 
y&=\cot(2\theta), &
z&=-\cot(\alpha), 
\\
\cos(r)&=\frac{1}{\sqrt{x^2+1}}, &
\sin(2\theta)&=\frac{1}{\sqrt{y^2+1}},
&
\sin(\alpha)&=-\frac{1}{\sqrt{z^2+1}},
&
\\
\sin(r)&=\frac{x}{\sqrt{x^2+1}},
&
\cos(2\theta)&=\frac{y}{\sqrt{y^2+1}},
&
\cos(\alpha)&=\frac{z}{\sqrt{z^2+1}}. 
\end{align*}
For $s\in\Interval{0,s_*}$ the system \eqref{eqn:dr/ds},\eqref{eqn:dt/ds},\eqref{eqn:da/ds} takes the equivalent form
\begin{align}
\label{eqn:dx/ds}
\frac{dx}{ds}
&=\frac{(x^2+1)z}{\sqrt{z^2+1}},
\\[1ex]
\label{eqn:dy/ds}
\smash{\left\{\vphantom{
\begin{aligned}
\frac{dx}{ds}&=\frac{(x^2+1)z}{\sqrt{z^2+1}},\\[1ex]
\frac{dy}{ds}&=\frac{2(y^2+1)\sqrt{x^2+1}}{x\sqrt{z^2+1}},\\[1ex]
\frac{dz}{ds}&=\frac{(2n-2)yz\sqrt{z^2+1}\sqrt{x^2+1}}{x}+\frac{(2n-1)}{x}\sqrt{z^2+1}.
\end{aligned}}\right.}\,
\frac{dy}{ds}
&=\frac{2(y^2+1)\sqrt{x^2+1}}{x\sqrt{z^2+1}},
\\[1ex]
\label{eqn:dz/ds}
\frac{dz}{ds}
&=\frac{(2n-2)yz\sqrt{z^2+1}\sqrt{x^2+1}}{x}+\frac{(2n-1)}{x}\sqrt{z^2+1}.
\end{align}
Note that the domain for the ODE above is $(x,y,z)\in Q=\interval{0,\infty}\times\Interval{0,\infty}\times \Interval{0,\infty}$.
The variables $x,y,z$ are all weakly increasing along the motion, with initial values 
$x(0)=x_0\vcentcolon=\tan(r_0)>0$ and $y(0)=0=z(0)$.

\emph{Step 1: Preliminary bound on $y$ for the first part of the motion.} 
Equations \eqref{eqn:dy/ds} and \eqref{eqn:dz/ds} imply 
\begin{align}\label{eqn:dz/dy}
\frac{dz}{ds}
&=\Biggl(
\frac{(n-1)yz(z^2+1)} {(y^2+1) }
+\frac{(2n-1)(z^2+1)} {2(y^2+1)\sqrt{x^2+1}}\Biggr)
\frac{dy}{ds}.
\end{align}
Let $s_1\in\interval{0,s_*}$ be fixed, to be chosen. 
Let  $x_1=x(s_1)$, $y_1=y(s_1)$, $z_1=z(s_1)$ and then set
\begin{align*}
\varepsilon=\frac{(2n-1)}{2\sqrt{x_1^2+1}}.
\end{align*}
Since $\frac{dz}{ds}(0)>\varepsilon\frac{dy}{ds}(0)$ 
there exists $s_\varepsilon\in\intervaL{0,s_1}$ such that 
$z\geq \varepsilon y$ for all $s\in [0,s_\varepsilon]$. 
In particular, 
\begin{align*}
\frac{dz}{ds}
&\geq \Biggl(
\frac{(n-1)y^2\varepsilon(z^2+1)} {(y^2+1) }
+\frac{(z^2+1)\varepsilon} {(y^2+1)}\Biggr)
\frac{dy}{ds} 
\geq(z^2+1)\varepsilon\frac{dy}{ds}
\end{align*}
for all $s\in[0,s_\varepsilon]$, which implies 
$\arctan(z)\geq\varepsilon y$ or, equivalently, 
$z\geq\tan(\varepsilon y)$
for all $s\in[0,s_\varepsilon]$. 
Thus, in fact, $s_\varepsilon=s_1$ since $\tan(\varepsilon y)>\varepsilon y$ if $y>0$. 
To conclude, we obtain the preliminary bound:
\begin{align}\label{eqn:y1estimate}
y_1\leq \frac{z_1}{\varepsilon}\leq z_1\sqrt{x_1^2+1}.
\end{align}
\begin{remark}
The bound \eqref{eqn:y1estimate} is global in time in the sense that one can take any value $s_1=s$ in the interval $]0,s_{\ast}[$, but we will introduce a stopping condition at a suitably chosen intermediate time $s_1$ and use the estimate above for the \emph{first part of the motion} only.
\end{remark}

\emph{Step 2: Preliminary bound on $y$ for the second part of the motion.} 
Equation \eqref{eqn:dz/dy} also implies (recalling that $n\geq 2$ throughout our discussion)
\begin{align}
\label{eqn:20210425-2}
\frac{d}{ds}\log\biggl(\frac{z^2}{z^2+1}\biggr)
=
\frac{2}{z(z^2+1)}\frac{dz}{ds}
&\geq\frac{2y}{y^2+1}\frac{dy}{ds}
= \frac{d}{ds}\log(y^2+1)
\end{align}
for all $s\in\interval{0,s_*}$. 
Integrating \eqref{eqn:20210425-2} from $s_1$ to $s\in\Interval{s_1,s_*}$ we obtain  
\begin{align}\label{eqn:20210425-3}
\log\Biggl(\frac{z^2(z_1^2+1)}{(z^2+1)z_1^2}\Biggr)
&\geq\log\Biggl(\frac{y^2+1}{y_1^2+1}\Biggr). 
\end{align}
Estimates \eqref{eqn:y1estimate} and \eqref{eqn:20210425-3} imply 
for any $s\in\Interval{s_1,s_*}$
\begin{align}\notag
y^2+1
&\leq(y_1^2+1)\frac{z^2(z_1^2+1)}{(z^2+1)z_1^2}
\leq\Bigl(x_1^2+1+z_1^{-2}\Bigr)\frac{z^2(z_1^2+1)}{(z^2+1)}.
\shortintertext{In particular, }
\label{eqn:y^2+1}
\frac{y\sqrt{z^2+1}}{z}
&\leq\sqrt{x_1^2+1+z_1^{-2}} \sqrt{z_1^2+1}.
\end{align}
\emph{Step 3: Global bound on $z$.}  
Equations \eqref{eqn:dx/ds} and \eqref{eqn:dz/ds} in combination with \eqref{eqn:y^2+1} imply 
\begin{align}\notag
\frac{dz}{ds}
&=\Biggl(\frac{(2n-2)y(z^2+1)}{x\sqrt{x^2+1}}+\frac{(2n-1)(z^2+1)}{x(x^2+1)z}\Biggr)
\frac{dx}{ds}
\\
&=\Biggl(\frac{(2n-2)y\sqrt{z^2+1}}{\sqrt{1+x^{-2}}~z}+\frac{(2n-1)\sqrt{z^2+1}}{x(1+x^{-2})z^2}\Biggr)
\frac{z\sqrt{z^2+1}}{x^2}
\frac{dx}{ds}
\label{eqn:20210505}
\\
&\leq C_1\frac{z\sqrt{z^2+1}}{x^2}
\frac{dx}{ds}
\label{eqn:20210425-5}
\end{align}
for all $s\in\Interval{s_1,s_*}$, where we
have set 
\begin{align*}
C_1\vcentcolon=(2n-1)\sqrt{z_1^2+1}\Biggl( \sqrt{x_1^2+1+z_1^{-2}}\sqrt{z_1^2+1}+\frac{1}{x_1 z_1^2}\Biggr)
\end{align*}
and used that the function $0< z\mapsto z^{-2}\sqrt{z^2+1}$ in the second summand of \eqref{eqn:20210505} is decreasing. 
Dividing \eqref{eqn:20210425-5} by $z\sqrt{z^2+1}$ and integrating, we obtain  
\begin{align}\label{eqn:integrate(z)}
\int^{s_*}_{s_1}\frac{1}{z\sqrt{z^2+1}}\frac{dz}{ds}\,ds
&\leq\frac{C_1}{x_1}
\leq(2n-1)\sqrt{z_1^2+1}\Biggl( \frac{\sqrt{x_0^2+1+z_1^{-2}}}{x_0}\sqrt{z_1^2+1}+\frac{1}{z_1^2x_0^2} \Biggr),
\end{align}
%
Towards a contradiction, suppose that $s\mapsto z(s)$ is unbounded for any choice of initial value $x_0>1$.  
Then, in particular, we may choose $s_1\in\interval{0,s_*}$ such that $z_1=z(s_1)=1/x_0$.  
With this choice, the right-hand side of \eqref{eqn:integrate(z)} is bounded from above by $12n$ uniformly in $x_0>1$, while the left-hand side of \eqref{eqn:integrate(z)} is bounded from below by 
\begin{align*}
\int^{1}_{1/x_0}\frac{1}{z\sqrt{z^2+1}}\,dz
\end{align*}
which diverges as $x_0\to\infty$. 
This contradiction yields a global upper bound on $z$ provided that $x_0$ is chosen sufficiently large. 
Rephrasing this outcome back in the original variables, the third component $\alpha$ of trajectories with $r_0$ close enough to $\pi/2$ remains negative (bounded away from zero), which means that such trajectories exit the domain $B$ from the side $r=\pi/2$, as claimed. 
\end{proof}

We can now proceed and wrap things together to prove our main theorem.

\begin{proof}[Proof of Theorem \ref{thm:main}]
Appealing to Lemma \ref{lem:roof} and Lemma \ref{lem:side} respectively, we can find  $r'_0, r''_0 \in\interval{0,\pi/2}$ with $r'_0<r''_0$ such that the trajectories emanating from $(r'_0, \pi/4,-\pi/2)$ and $(r''_0, \pi/4, -\pi/2)$ leave the domain $B$ from the roof ($\alpha=0$) and from the side ($r=\pi/2$) respectively. Now, consider the image through the flow map of the relatively compact interval $[r'_0, r''_0]\subset\interval{0,\pi/2}$. It is understood that each trajectory is followed till $\partial B$, where there is a transverse crossing, and not beyond. In particular, by considering the evaluation of each trajectory at the exit time, the flow defines a $C^0$ curve, whose image will be henceforth denoted by $\Gamma$. Note that (by continuous dependence on initial data) $\Gamma$ is a connected set and by definition (keeping in mind Lemma \ref{lem:well-posed}) its image will be contained in the dihedron 
\[
\Delta\vcentcolon=(B \cap \left\{\alpha=0 \right\}) \cup (B \cap \left\{r=\pi/2\right\}).
\]
Furthermore, $\Gamma$ contains points both on $B \cap \left\{\alpha=0 \right\}$ and on $B \cap \left\{r=\pi/2\right\}$. Hence, by connectedness $\Gamma$ shall also contain point on the intersection
\[
(B \cap \left\{\alpha=0 \right\}) \cap (B \cap \left\{r=\pi/2\right\}).
\]
In other words, if we get back to the picture in the planar domain with coordinates $(r,\theta)$ only,  we have singled out a monotone arc that starts at a point $(r_0, \pi/4)$ and reaches the segment $r=\pi/2$ orthogonally.
Hence, by simply reflecting such a trajectory along the axes $r=\pi/2$ and $\theta=\pi/4$ of our planar domain $(r,\theta)\in\interval{0,\pi}\times\interval{0,\pi/2}$ we get a smooth closed orbit solving our ODE system. Thereby, by virtue of the equivariant reduction we presented at the beginning of the present section, the conclusion follows. 
\end{proof}

\begin{remark}
Concerning a different geometric problem, we recall here that Angenent \cite{Angenent1992} used the shooting method to construct an embedded, rotationally symmetric, self-shrinking $S^{n-1}\times S^1$ in $\R^{n+1}$.  Concerning, instead, the contributions we will present in the next section, note that, later, more examples of immersed, rotationally symmetric self-shrinkers were found in \cite{Drugan2018}.  
\end{remark}

Numerical simulations seem to clearly indicate that the closed periodic orbits constructed above are unique, and thus the corresponding minimal embedding of $S^{n-1}\times S^{n-1}\times S^1$ in $\Sp^{2n}$ should be unique in the class of $G$-equivariant maps for any $n\geq 2$. In fact, in the case when $n=2$ we wish to formulate here a stronger conjecture:

\begin{conj}\label{conj:unique} There exists, up to ambient isometry, a unique minimal embedding of the three-dimensional torus in the round four-dimensional sphere.
\end{conj}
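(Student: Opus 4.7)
\medskip

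The conjecture naturally splits into two problems of quite different nature: (i) uniqueness of the closed periodic orbit of the ODE system \eqref{eqn:dr/ds}--\eqref{eqn:da/ds} within the $G$-equivariant class, and (ii) proving that \emph{every} minimal embedding of $T^3$ in $\Sp^4$ is in fact $G$-equivariant (up to ambient isometry) for $G=\Ogroup(2)\times\Ogroup(2)$. I would tackle (i) first, as it looks substantially more tractable, and regard (ii) as the main conceptual obstacle.

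For (i), the target is to show that the set of initial data $r_0 \in \interval{0,\pi/2}$ producing trajectories that hit the side $\{r=\pi/2\}$ orthogonally is a singleton. Lemmata \ref{lem:roof} and \ref{lem:side} already partition the initial data into two nonempty open sets according to whether the exit occurs through the roof $\{\alpha=0\}$ or through the side $\{r=\pi/2\}$. The plan is to prove that the ``exit location map'' is strictly monotone in $r_0$: smaller $r_0$ yields earlier exit through the roof, larger $r_0$ later exit through the side, with a unique critical value yielding orthogonal impact. Monotonicity should follow from a comparison principle for the autonomous system \eqref{eqn:dr/ds}--\eqref{eqn:da/ds}: two distinct trajectories with different initial data $r_0 < r_0'$, both starting on $\{\theta=\pi/4,\alpha=-\pi/2\}$, cannot cross in the full $(r,\theta,\alpha)$-phase space, and a careful analysis of the relative order along the motion (using the preliminary bounds derived in the proof of Lemma \ref{lem:side} and the monotonicity statements of Lemma \ref{lem:well-posed}) should force the shooting value to be unique. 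An alternative route is to linearise the system along the closed orbit constructed in Theorem \ref{thm:main}, check that the associated Floquet multiplier transverse to the invariant symmetry axes is not equal to $1$, and conclude local (hence global) rigidity of the orbit.

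For (ii), the only viable strategy I can see is a Brendle-type maximum principle argument modelled on \cite{Brendle2013}. One would introduce a two-point function on the product $\Sigma\times\Sigma$ of the hypersurface with itself, of the general form
\[
Z(x,y)=\Psi\bigl(|A|(x)\bigr)\,d(x,y)-\Phi\bigl(\text{symmetric data}(x,y)\bigr),
\]
engineered so that a Simons-type identity for the second fundamental form of a minimal hypersurface in $\Sp^4$ produces a degenerate elliptic inequality for $Z$. At an interior maximum one would then extract that the shape operator admits a parallel eigenstructure compatible with the $\Ogroup(2)\times\Ogroup(2)$ symmetry, whence equivariance follows from a standard symmetry reduction. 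The hard part, and the reason why the problem is genuinely more delicate than Brendle's, is twofold: the induced flat metric on $T^3$ has a three-dimensional Teichm\"uller space (as opposed to a two-dimensional one for $T^2$), so an Almgren--Calabi type Hopf differential argument is unavailable a priori; and the hypersurface now has three principal curvatures at each point instead of two, so the algebraic constraints produced by the maximum principle are correspondingly weaker and need to be supplemented by curvature identities specific to the codimension-one setting in $\Sp^4$.

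A complementary, softer approach worth exploring is variational: if $\mathcal{A}_*$ denotes the area of the embedding produced in Theorem \ref{thm:main}, one could attempt to show that any minimal embedding $T^3\hookrightarrow \Sp^4$ has area exactly $\mathcal{A}_*$ (possibly using a lower bound from the min-max width in a suitable saturation, combined with an upper bound from $G$-equivariance of the constructed example), and then invoke compactness of minimal hypersurfaces of bounded area and index in $\Sp^4$ together with a rigidity statement at the minimum. This would bypass part (ii) at the cost of proving sharp area estimates, which is itself nontrivial but may be more accessible than the direct maximum-principle route.
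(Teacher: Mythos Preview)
The statement you are addressing is a \emph{conjecture}, not a theorem: the paper offers no proof, only the remark that numerical evidence supports uniqueness of the closed orbit in the equivariant class and that the full statement should be viewed as a higher-dimensional analogue of Lawson's conjecture (resolved by Brendle). There is therefore nothing to compare your proposal against; what you have written is a research outline, not a proof, and should be assessed as such.

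On part (i): the comparison-principle argument has a genuine gap. Non-crossing of trajectories in the three-dimensional phase space $(r,\theta,\alpha)$ is automatic from ODE uniqueness, but this does not by itself yield monotonicity of the exit map $r_0\mapsto(\text{exit point on }\partial B)$. Two trajectories that never meet in $\R^3$ can still project to curves in the $(r,\theta)$-plane whose exit behaviour is non-monotone in $r_0$; you would need to show that the order in some scalar quantity (e.g.\ $\alpha$ as a function of $r$, or $\theta$ at fixed $r$) is preserved along the flow, and the estimates in Lemmata~\ref{lem:roof} and~\ref{lem:side} are not sharp enough to deliver this. The Floquet alternative gives only local uniqueness near the constructed orbit, so even if the multiplier is nondegenerate you would still need a separate global argument to exclude other closed orbits.

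On part (ii): you correctly identify this as the main obstacle, and your sketch is honest about its speculative nature. No two-point function with the required properties is known for minimal hypersurfaces in $\Sp^4$; Brendle's argument for the Clifford torus relies heavily on the fact that a minimal surface in $\Sp^3$ has exactly two principal curvatures $\pm\lambda$, which forces $|A|$ to control the full shape operator. With three principal curvatures this link breaks, and the Simons identity alone does not close the inequality. The variational route likewise requires a sharp area lower bound for minimal embeddings of $T^3$ that is not currently available. In short, both approaches to (ii) are plausible directions but each is missing its central ingredient; the conjecture remains open.
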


This can be regarded as a higher-dimensional counterpart of Lawson's conjecture, confirmed by Brendle in \cite{Brendle2013}. 
It is not clear at the moment whether we have compelling evidence to envision a similar landscape in $\Sp^{m+1}$ when $m>3$ (and especially for $m>4$, cf. Appendix~\ref{sec:HsiangClaim}), i.\,e. whether minimal embeddings of $T^{m}$ in $\Sp^{m+1}$ should actually exist and, if so, whether such embeddings should be unique. 
These questions stand as fascinating, challenging open problems in the field.

\section{Existence of infinitely many minimally immersed hypertori and minimally embedded hyperspheres}\label{sec:immersed}

In this section it is convenient to introduce the shifted variable $\vartheta(s)=\theta(s)-\pi/4$. 
Then, the system \eqref{eqn:dr/ds},\eqref{eqn:dt/ds},\eqref{eqn:da/ds} takes the form
\begin{align}
\label{eqn:dr/ds2}
\frac{dr}{ds}&=\cos(\alpha), \\[1ex]
\label{eqn:dt/ds2}
\smash{\left\{\vphantom{
\begin{aligned}
\frac{dr}{ds}&=\cos(\alpha), \\[1ex]
\frac{d\vartheta}{ds}&=\frac{\sin(\alpha)}{\sin(r)}, \\[1ex]
\frac{d\alpha}{ds}&=-\frac{(2n-2)\tan(2\vartheta)}{\sin(r)}\cos(\alpha)-(2n-1)\cot(r)\sin(\alpha).
\end{aligned}}\right.}\,
\frac{d\vartheta}{ds}&=\frac{\sin(\alpha)}{\sin(r)}, \\[1ex]
\label{eqn:da/ds2}
\frac{d\alpha}{ds}&=-\frac{(2n-2)\tan(2\vartheta)}{\sin(r)}\cos(\alpha)-(2n-1)\cot(r)\sin(\alpha).
\end{align}
We will study solutions of the system \eqref{eqn:dr/ds2},\eqref{eqn:dt/ds2},\eqref{eqn:da/ds2} with initial data $(r,\vartheta,\alpha)(0)=(r_0,0,\alpha_0)$, where $r_0\in\Interval{\pi/2,\pi}$ and $\alpha_0\in\interval{0,\pi/2}$.  
We will then show that if $n\in\{2,3\}$ and if $\alpha_0>0$ is sufficiently small, the trajectory will intersect $\{\vartheta=0\}$ again for some $s>0$.  
Moreover, $\alpha$ will be arbitrarily close to zero at the point of intersection if $\alpha_0>0$ is sufficiently small. 
Since the problem is symmetric with respect to the reflection 
$(r,\vartheta,\alpha)\mapsto(r,-\vartheta,-\alpha)$ we may iterate our estimates, and will prove the existence of trajectories whose second component $\vartheta$ has arbitrarily many zeros.

\begin{lemma}\label{lem:well-posed_centre}
Given any $r_0\in\Interval{\pi/2,\pi}$ and any $\alpha_0\in\interval{0,\pi/2}$ there exists $0<s_*\leq\infty$
such that the system \eqref{eqn:dr/ds2},\eqref{eqn:dt/ds2},\eqref{eqn:da/ds2} has a unique solution 
\[
(r,\vartheta,\alpha)\colon\Interval{0,s_*}\to D\vcentcolon=\Interval{\tfrac{\pi}{2},\pi}\times\interval{-\tfrac{\pi}{4},\tfrac{\pi}{4}}\times\interval{-\tfrac{\pi}{2},\tfrac{\pi}{2}}
\] 
with initial data $(r,\vartheta,\alpha)(0)=(r_0,0,\alpha_0)$, on which the solution depends continuously, satisfying
$dr/ds>0$ and one of the following alternatives: 
\begin{align}\label{eqn:20210831-r=pi}
\lim_{s\to s_*}r(s)&=\pi,
&
\limsup_{s\to s_*}\abs{\vartheta(s)}&=\frac{\pi}{4},
&
\limsup_{s\to s_*}\abs{\alpha(s)}&=\frac{\pi}{2}. 
\end{align}
Furthermore, if $s_{\ast}=\infty$, then the third condition holds true.
The conditions in \eqref{eqn:20210831-r=pi} are not mutually exclusive (cf. Lemma \ref{lem:iteration}), but if either $\alpha(s)$ or $\vartheta(s)$ converges to a nonzero limit as $s\to s_*$ then 
\begin{align}\label{eqn:20210831-r<pi}
\lim_{s\to s_*}r(s)<\pi.
\end{align}
\end{lemma}

\begin{proof}
By our choice of initial data, there exists a solution to system \eqref{eqn:dr/ds2},\eqref{eqn:dt/ds2},\eqref{eqn:da/ds2} which stays in $D$ at least for a short time. 
Since $-\pi/2<\alpha<\pi/2$, equation \eqref{eqn:dr/ds2} implies $dr/ds=\cos(\alpha)>0$ as claimed. 
The vector field on the right-hand side of the ODE system has no stationary points in $D$; in fact  $(dr/ds)^2+(d\vartheta/ds)^2\geq1$ by \eqref{eqn:constraint}. 
As a result, keeping in mind the very definition of the (bounded) domain $D$, the motion will either satisfy one of the conditions \eqref{eqn:20210831-r=pi} for finite $s_{\ast}$ or else will exist for all positive times and, in that case, must necessarily satisfy $\limsup_{s\to \infty}\abs{\alpha(s)}=\frac{\pi}{2}$ by \eqref{eqn:dr/ds2}.
Thus, there exists $0<s_*\leq\infty$ such that at least one of the alternatives in \eqref{eqn:20210831-r=pi} holds. 
Let us now justify the final claim in the statement instead.

\emph{Case 1:} $\lim_{s\to s_*}\alpha(s)=\alpha_*\neq0$ exists. 
Since the problem is invariant under the reflection $(r,\vartheta,\alpha)\mapsto(r,-\vartheta,-\alpha)$ we may assume $\alpha_*>0$ without loss of generality. 
By continuity of $\alpha$ there exist $\varepsilon\in\interval{0,\pi/2}$ and $0<s_\varepsilon<s_*$ such that $\alpha(s)\geq\varepsilon$ for all $s\in\Interval{s_\varepsilon,s_*}$. 
Equations \eqref{eqn:dr/ds2} and \eqref{eqn:dt/ds2} then imply 
\begin{align}\label{eqn:20210730-1}
\frac{d\vartheta}{ds}
&=\frac{\tan(\alpha)}{\sin(r)}\frac{dr}{ds}
\geq\frac{\varepsilon}{\sin(r)}\frac{dr}{ds}.
\end{align}
Integrating \eqref{eqn:20210730-1} from $s=s_\varepsilon$ to any $s\in\interval{s_\varepsilon,s_*}$ yields 
\begin{align}\label{eqn:20210730-2}
\frac{\pi}{2}\geq\vartheta(s)-\vartheta(s_\varepsilon)\geq
\varepsilon\log\Biggl(\tan\biggl(\frac{r(s)}{2}\biggr)\Biggr)
-\varepsilon\log\Biggl(\tan\biggl(\frac{r(s_\varepsilon)}{2}\biggr)\Biggr).
\end{align}
Passing to the limit $s\to s_*$ in \eqref{eqn:20210730-2}, keeping in mind the monotonicity of $s\mapsto r(s)$, proves \eqref{eqn:20210831-r<pi}. 

\emph{Case 2:} $\lim_{s\to s_*}\vartheta(s)=\vartheta_*\neq0$ exists. 
Without loss of generality we may assume $\vartheta_*>0$ by symmetry of the problem.  
Therefore, there exist $\varepsilon>0$ and $0<s_\varepsilon<s_*$ such that $\tan(2\vartheta)\geq\varepsilon$ for all $s\in\Interval{s_\varepsilon,s_*}$. 
Equations \eqref{eqn:dr/ds2}--\eqref{eqn:da/ds2} imply 
\begin{align}\label{eqn:20210730-3}
\frac{d\alpha}{ds}&=-(2n-2)\frac{\tan(2\vartheta)}{\sin(r)}\frac{dr}{ds}-(2n-1)\cos(r)\frac{d\vartheta}{ds}. 
\end{align}
Integrating \eqref{eqn:20210730-3} from $s=s_\varepsilon$ to any $\sigma\in\interval{s_\varepsilon,s_*}$ by parts, we obtain 
\begin{align}\label{eqn:20230607}
\int^{\sigma}_{s_\varepsilon}\frac{(2n-2)\varepsilon}{\sin(r)}\frac{dr}{ds}\,ds
&\leq-\int^{\sigma}_{s_\varepsilon}\frac{d\alpha}{ds}\,ds
-(2n-1)\int^{\sigma}_{s_\varepsilon}\cos(r)\frac{d\vartheta}{ds}\,ds  
\\\notag
&=\Bigl(-\alpha(s)-(2n-1)\vartheta(s)\cos\bigl(r(s)\bigr)\Bigr)\Big\vert^{\sigma}_{s_\varepsilon}
-(2n-1)\int^{\sigma}_{s_\varepsilon}\vartheta\sin(r)\frac{dr}{ds}\,ds  
\\\notag
&\leq -\alpha(\sigma)+\alpha(s_\varepsilon)-(2n-1)\Bigl(\vartheta(\sigma)\cos\bigl(r(\sigma)\bigr)-\vartheta(s_\varepsilon)\cos\bigl(r(s_\varepsilon)\bigr)\Bigr)
\leq\pi+n\pi.
\end{align}
Since $r(s)$ is strictly increasing in $s$, its limit as $s\to s_*$ exists and \eqref{eqn:20210831-r<pi} follows by letting $\sigma\to s_*$ in \eqref{eqn:20230607} and arguing exactly as we did in closing the treatment of Case 1.  
\end{proof}

\subsection{Preparation: Estimates for small initial angles}

\begin{lemma}[{see Figure \ref{fig:centre2}}]\label{lem:20210612-1}
Given initial data $r_0\in\Interval{\pi/2,\pi}$ and $\alpha_0\in\interval{0,\pi/2}$ let $(r,\vartheta,\alpha)\colon\allowbreak\Interval{0,s_*}\to D$ be the solution constructed in Lemma~\ref{lem:well-posed_centre}.  
Then there exists $s_1\in\interval{0,s_*}$ such that 
\begin{align*}
\vartheta(s_1)&=\frac{1}{4n}\Bigl(\frac{\pi}{2}-\alpha_0\Bigr)\alpha_0.  
\end{align*}
Moreover, $0<\vartheta(s)\leq\vartheta(s_1)$ and $\tan\bigl(\alpha(s)\bigr)\geq \frac{1}{2}\tan(\alpha_0)$ for all $s\in\intervaL{0,s_1}$.
\end{lemma}

\begin{proof}
Let $s_1$ be the least upper bound (sup) of the set of all $b\in\Interval{0,s_*}$
with the property that for all $s\in\IntervaL{0,b}$
\begin{align}\label{eqn:20210830-s1}
0\leq\vartheta(s)\leq\frac{1}{4n}\Bigl(\frac{\pi}{2}-\alpha_0\Bigr)\alpha_0.
\end{align}
Then, $s_1>0$ because $\vartheta(0)=0$ and $\alpha_0\in \interval{0,\pi/2}$. 
Similarly and independently we consider 
$s_\alpha\vcentcolon=\sup\{b\in\Interval{0,s_*}\st \forall s\in\Interval{0,b} ~~\alpha(s)>0\}$. 
By equation~\eqref{eqn:dt/ds2} we have $d\vartheta/ds\geq 0$ in the interval $\Interval{0,s_{\alpha}}$ and $\cos(r)\leq 0$ since $r\in[\pi/2,\pi]$. 
Thus, \eqref{eqn:dt/ds2} and \eqref{eqn:da/ds2} imply
\begin{align}\label{eqn:20210612-da/dt}
\frac{d\alpha}{ds}
&=\Bigl(-(2n-2)\tan(2\vartheta)\cot(\alpha)-(2n-1)\cos(r) \Bigr)\frac{d\vartheta}{ds}
\geq -(2n-2)\tan(2\vartheta)\cot(\alpha) \frac{d\vartheta}{ds}
\end{align}
for all $s\in\Interval{0,s_{\alpha}}$. 
Multiplying estimate \eqref{eqn:20210612-da/dt} by $\tan(\alpha)>0$, we obtain 
\begin{align}
\label{eqn:20210612-0}
-\frac{d}{ds}\log\bigl(\cos(\alpha)\bigr)
=\tan(\alpha)\frac{d\alpha}{ds}
&\geq-(2n-2)\tan(2\vartheta) \frac{d\vartheta}{ds}
=(n-1)\frac{d}{ds}\log\bigl(\cos(2\vartheta)\bigr).
\end{align}
Integrating \eqref{eqn:20210612-0} from $s=0$ to any $s\in\Interval{0,s_{\alpha}}$ implies  
\begin{align}\label{eqn:20210612-1}
\frac{\cos(\alpha_0)}{\cos(\alpha)}
&\geq\bigl(\cos(2\vartheta)\bigr)^{n-1}.
\intertext{Estimate \eqref{eqn:20210612-1} is equivalent to }
\notag
\cos(\alpha)&\leq\bigl(\cos(2\vartheta)\bigr)^{1-n}\cos(\alpha_0).
\end{align}
Since $\tan\bigl(\arccos(x)\bigr)=\sqrt{x^{-2}-1}$ for $0<x\leq1$, we obtain 
for any $s\in\Interval{0,s_{\alpha}}$ 
\begin{align} \label{eqn:20210612-2}
\tan(\alpha)&\geq
\sqrt{\frac{\cos^{2n-2}(2\vartheta)}{\cos^2(\alpha_0)}-1} 
=\tan(\alpha_0)\sqrt{\frac{\cos^{2n-2}(2\vartheta)-\cos^2(\alpha_0)}{\sin^2(\alpha_0)}}.
\end{align}
On the other hand, in $\Interval{0,s_1}$ the assumption $0\leq2\vartheta\leq(\pi/2-\alpha_0)\alpha_0/(2n)\leq\alpha_0/n$ implies 
$\cos(2\vartheta)\geq\cos(\alpha_0/n)$;  
the map $f\colon\intervaL{0,\pi/2}\to\R$ given by 
\begin{align*}
f(a)= \frac{\cos^{2n-2}(a/n)-\cos^2(a)}{\sin^2(a)}
\end{align*}
is decreasing with minimum value $f(\pi/2)>1/4$ (since $n\geq 2$), hence we obtain 
\begin{align}\label{eqn:20210612-4}
\tan(\alpha)\geq \tan(\alpha_0)\sqrt{f(\alpha_0)}
\geq \frac{1}{2}\tan(\alpha_0)
\end{align}
for all $s\in\Interval{0,s_{\alpha}}\cap \Interval{0,s_1}$. 
In particular, \eqref{eqn:20210612-4} implies that $\alpha$ is strictly positive in the intersection of both intervals. 
Hence, $s_1\leq s_\alpha$ because otherwise, we would have $s_\alpha<s_1\leq s_*$ and thus $\alpha(s_\alpha)=0$ in contradiction to  \eqref{eqn:20210612-4}. 
In particular, $\vartheta$ is strictly increasing in $\Interval{0,s_1}$. 
It remains to show $s_1<s_*$ (in particular: finiteness of $s_1$ when $s_{\ast}=\infty$) because then the upper bound in \eqref{eqn:20210830-s1} must be attained and the claim follows. Since equations \eqref{eqn:dt/ds2} and \eqref{eqn:da/ds2} imply $d\alpha/ds\leq (2n-1)d\vartheta/ds$ in $\Interval{0,s_1}$ we have 
\begin{align}\label{eqn:20210705-alpha}
\alpha(s)
&\leq\alpha_0+(2n-1)\vartheta(s)
\leq\alpha_0+\frac{1}{2}\Bigl(\frac{\pi}{2}-\alpha_0\Bigr)\alpha_0<\frac{\pi}{2}
\end{align}
for all $s\in\Interval{0,s_1}$. 
Indeed, the function $h\colon[0,\pi/2]\to\R$ given by $h(a)=a+(\pi/2-a)a/2$ satisfies $h'(a)=1+\pi/4-a>0$. 
Therefore $h(\alpha_0)<h(\pi/2)=\pi/2$ for any choice of $\alpha_0\in\interval{0,\pi/2}$. If $s_1=s_*$ then \eqref{eqn:20210705-alpha} and \eqref{eqn:20210830-s1} together with Lemma \ref{lem:well-posed_centre} \eqref{eqn:20210831-r=pi} imply $r(s)\to\pi$ as $s\to s_*$. 
However, since $\vartheta$ is bounded and strictly increasing in $\Interval{0,s_1}$, the limit 
$\lim_{s\to s_*}\vartheta(s)>0$ exists. 
Hence, Lemma \ref{lem:well-posed_centre} \eqref{eqn:20210831-r<pi} applies, which yields a contradiction.  
As a result, $s_1<s_*$ and the claim follows.  
\end{proof}

\begin{lemma}[{see Figure \ref{fig:centre2}}]\label{lem:20210705}
Let $n\in\{2,3\}$ and let $\delta>0$ be arbitrary. 
Then there exists $0<a_\delta<\pi/2$ such that for any $\alpha_0\in\intervaL{0,a_\delta}$ and any $r_0\in\Interval{\pi/2,\pi}$ the solution $(r,\vartheta,\alpha)\colon\Interval{0,s_*}\to D$ 
with $(r,\vartheta,\alpha)(0)=(r_0,0,\alpha_0)$ as constructed in Lemma~\ref{lem:well-posed_centre} allows for $s_2\in\interval{0,s_*}$ such that 
\[\alpha(s_2)=0\]
and, in addition, $\alpha(s)>0$ and $\tan\bigl(2\vartheta(s)\bigr)<\delta$ for all $s\in\Interval{0,s_2}$. 
\end{lemma}%

\begin{figure}\centering
\begin{tikzpicture}[line cap=round,line join=round,baseline={(0,0)},scale=\textwidth/3.5cm] 
\draw[very thin](0,0)grid [xstep=pi/2,ystep=pi/4](pi,pi/2);
\draw[-latex](0,pi/4)--++(pi,0)node[below left]{$r$};
\draw[-latex](0,0)--(0,pi/2)node[below left]{$\vartheta$};
\draw plot[plus](0,0)node[below]{$0$};
\draw plot[plus](0,0)node[left]{$-\frac{\pi}{4}$};
\draw plot[plus](pi/2,0)node[below]{$\frac{\pi}{2}$};
\draw plot[plus](pi,0)node[below]{$\pi$};
\draw plot[plus](0,pi/4)node[left]{$0$};
\draw[smooth,semithick]plot coordinates {  
(2.0944,0.7854) (2.1100,0.7870) (2.1256,0.7887) (2.1412,0.7904) (2.1568,0.7922) (2.1723,0.7940) (2.1879,0.7960) (2.2035,0.7979) (2.2191,0.8000) (2.2346,0.8022) (2.2502,0.8044) (2.2657,0.8067) (2.2813,0.8091) (2.2968,0.8116) (2.3124,0.8141) (2.3279,0.8168) (2.3434,0.8196) (2.3589,0.8225) (2.3745,0.8255) (2.3900,0.8286) (2.4055,0.8318) (2.4210,0.8352) (2.4365,0.8387) (2.4519,0.8423) (2.4674,0.8461) (2.4829,0.8500) (2.4983,0.8541) (2.5138,0.8584) (2.5292,0.8628) (2.5447,0.8673) (2.5601,0.8721) (2.5755,0.8770) (2.5910,0.8821) (2.6064,0.8874) (2.6218,0.8928) (2.6372,0.8985) (2.6526,0.9043) (2.6680,0.9102) (2.6834,0.9163) (2.6989,0.9226) (2.7143,0.9289) (2.7297,0.9353) (2.7452,0.9417) (2.7607,0.9479) (2.7762,0.9539) (2.7917,0.9596) (2.8072,0.9646) (2.8228,0.9685) (2.8383,0.9713) (2.8539,0.9725) (2.8696,0.9715) (2.8854,0.9673) (2.9010,0.9587) (2.9163,0.9440) (2.9312,0.9214) (2.9454,0.8886) (2.9584,0.8431) (2.9699,0.7825) (2.9789,0.7058) (2.9847,0.6145) (2.9865,0.5143) 
};
\draw plot[bullet](2.8539,0.9725)coordinate(s2)node[anchor=-90]{$s_2$};
\draw plot[bullet](2.9865,0.5143)node[anchor=180]{$s_*$};
\draw plot[vdash](2*pi/3,pi/4)node[below]{$r_0$}; 
\draw plot[hdash](pi/2,0.9725)coordinate(t2)node[left]{$\vartheta(s_2)$};
\draw[densely dashed](t2)--(s2);
\draw plot[bullet](2.23,{pi/4+(pi/2-pi/36)*(pi/36)/8})coordinate(s1)node[anchor=-90]{$s_1$};
\end{tikzpicture}
\caption{Visualisation of Lemmata \ref{lem:20210612-1} and \ref{lem:20210705} for $n=2$, $r_0=2\pi/3$ and $\alpha_0=\pi/36$.}%
\label{fig:centre2}%
\end{figure}
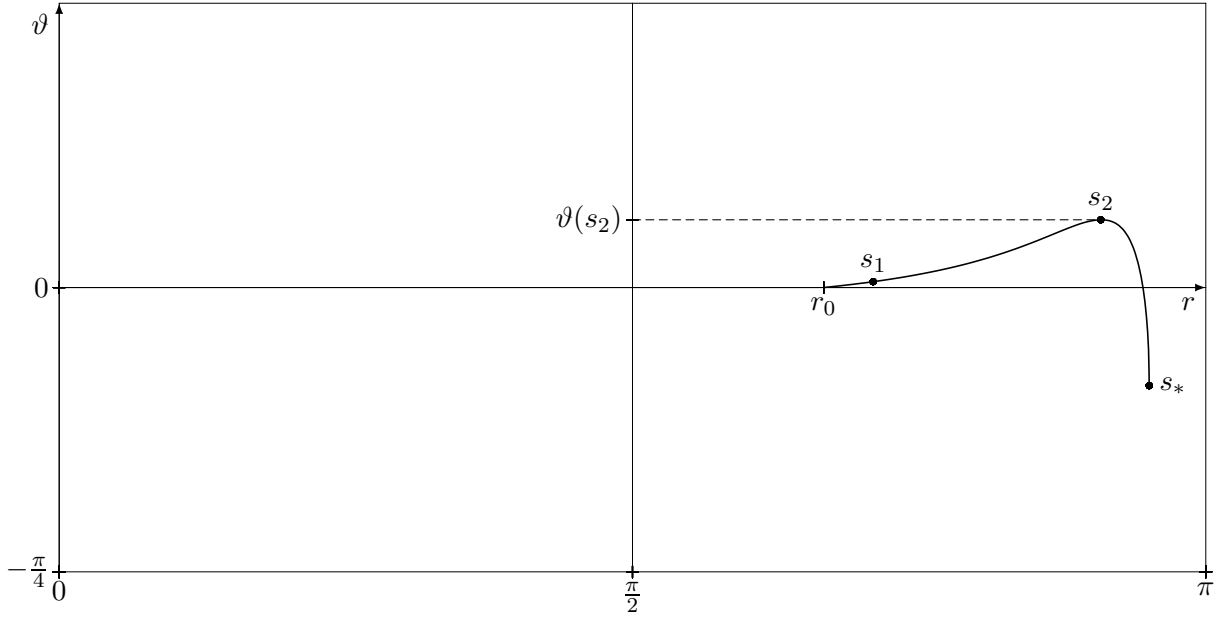%

\begin{proof} 
Let $s_1\in\interval{0,s_*}$ be as in Lemma \ref{lem:20210612-1}. 
and let $s_2$ be the least upper bound of the set of all $b\in\Interval{0,s_*}$ with the property that for all $s\in\Interval{0,b}$
\begin{align}\label{eqn:20210831-s2}
0<\alpha(s)<\tfrac{1}{3}.
\end{align}
We claim that if $\alpha_0>0$ is sufficiently small, then $s_2>s_1$. 
Since Lemma \ref{lem:20210612-1} implies that $\alpha$ is positive in $[0,s_1]$ it suffices to prove $\alpha(s)<1/3$ for all $s\in[0,s_1]$.  
As in estimate~\eqref{eqn:20210705-alpha}  we obtain for all $s\in[0,s_1]$
\begin{align} 
\alpha(s)
\leq\alpha_0+(2n-1)\vartheta(s)
\leq\alpha_0+\frac{2n-1}{4n}\Bigl(\frac{\pi}{2}-\alpha_0\Bigr)\alpha_0 \leq2\alpha_0. 
\end{align}
In particular, we have $\alpha(s)<1/3$ for all $s\in[0,s_1]$ provided that $\alpha_0<1/6$. 
Thus, $s_2>s_1$ for any choice of  $0<\alpha_0<1/6$.  
Since $\vartheta(s)>0$ for all $s\in\Interval{s_1,s_2}$ there exists $\beta\colon\Interval{s_1,s_2}\to\R$ such that 
\begin{align*}
\alpha(s)=\vartheta(s)\,\beta(s)
\end{align*}
for all $s\in\Interval{s_1,s_2}$.  
Equations \eqref{eqn:dt/ds2} and \eqref{eqn:da/ds2} then imply 
\begin{align}\notag
\vartheta\frac{d\beta}{ds}
&=\Bigl(-(2n-2)\tan(2\vartheta)\cot(\alpha)-\beta-(2n-1)\cos(r)\Bigr)\frac{d\vartheta}{ds}
\\\notag
&\leq\Bigl(-4(n-1)\vartheta\cot(\alpha)-\frac{\alpha}{\vartheta}+(2n-1)\Bigr)\frac{d\vartheta}{ds}
\\\label{eqn:20210629-tdv/ds}
&\leq\Bigl(-4\sqrt{(n-1)\alpha\cot(\alpha)}+(2n-1)\Bigr)\frac{d\vartheta}{ds}
\end{align}
in $\Interval{s_1,s_2}$, where the last inequality above relies on the standard AM-GM inequality.
The assumptions $n\in\{2,3\}$ and $0<\alpha< 1/3$ in $\Interval{s_1,s_2}$ imply 
\begin{align}\label{eqn:20210629-negative}
-4\sqrt{(n-1)\alpha\cot(\alpha)}+(2n-1)<-\frac{1}{2}.
\end{align}
The negativity of the left-hand side of \eqref{eqn:20210629-negative} is crucial here and in fact fails for $n\geq4$ even if $\alpha$ is arbitrarily small.  
Dividing estimate \eqref{eqn:20210629-tdv/ds} by $\vartheta>0$ and integrating from $s=s_1$ to any $s\in\Interval{s_1,s_2}$, we thus obtain 
\begin{align}\label{eqn:20210629-log}
\beta(s) 
-\beta(s_1)&\leq
\frac{1}{2}\log\bigl(\vartheta(s_1)\bigr)
-\frac{1}{2}\log\bigl(\vartheta(s)\bigr).
\end{align}
Moreover, since $\vartheta(s_1)=(\pi/2-\alpha_0)\alpha_0/(4n)\geq\alpha_0/9$ for $\alpha_0<1/6$ and $n\in\{2,3\}$ we have 
\begin{align*}
\beta(s_1)=\frac{\alpha(s_1)}{\vartheta(s_1)}
\leq\frac{\alpha_0}{\vartheta(s_1)}+(2n-1)
\leq9+5=14.
\end{align*}
We also have $\vartheta(s_1)\leq\pi\alpha_0/(8n)<\alpha_0/5$. 
Estimate \eqref{eqn:20210629-log} then implies for any $s\in\Interval{s_1,s_2}$  
\begin{align}\label{eqn:20210629-alpha}
\alpha&\leq\biggl(14
+\frac{1}{2}\log\Bigl(\frac{\alpha_0}{5}\Bigr)\biggr)\vartheta
-\frac{\vartheta}{2}\log(\vartheta).
\end{align}%
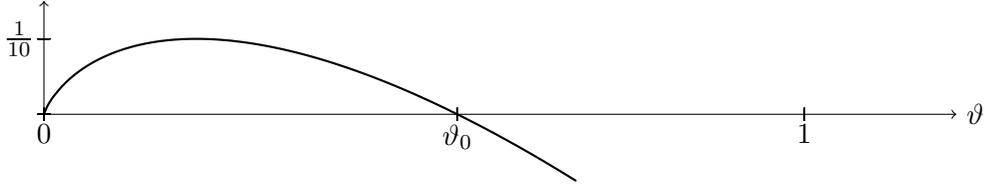
\begin{figure}\centering
\begin{tikzpicture}[line cap=round,line join=round,baseline={(0,0)},scale=10]
\draw[->](-0,0)--(1.2,0)node[right]{$\vartheta$};
\draw[->](0,-0)--(0,0.15);
\draw plot[plus](0,0)node[below]{$0$};
\draw plot[vdash](1,0)node[below]{$1$};
\draw plot[vdash]({exp(1)/5},0)node[below]{$\vartheta_0$};
\draw plot[hdash](0,1/10)node[left]{$\frac{1}{10}$};
\draw[thick,domain=0.00001:0.7,smooth,samples=180,variable=\t]plot({\t},{
(-0.3047189562170498)*\t-\t*ln(sqrt(\t))
});
\end{tikzpicture}
\caption{Plot of the function $\vartheta\mapsto\Bigl(14+\frac{1}{2}\log\bigl(\alpha_0/5\bigr)\Bigr)\vartheta
-\frac{1}{2}\vartheta\log(\vartheta)$ for $\alpha_0=e^{-27}$.}%
\label{fig:plot-t-t*log(t)}%
\end{figure}%
The function on the right-hand side of \eqref{eqn:20210629-alpha} can be made arbitrarily small by choosing $\alpha_0>0$ sufficiently small. 
In particular, we can achieve (say) $\alpha<1/10$ for all $s\in\Interval{s_1,s_2}$. 
Moreover, as illustrated in Figure \ref{fig:plot-t-t*log(t)}, the right-hand side of \eqref{eqn:20210629-alpha} as a function of $\vartheta>0$ vanishes for some $\vartheta_0>0$ which again is arbitrarily small if $\alpha_0>0$ is chosen sufficiently small. 
Since $\vartheta$ is strictly increasing in $\Interval{0,s_2}$, we obtain $\vartheta(s)<\vartheta_0$ for all $s\in\Interval{0,s_2}$. 

It remains to prove $s_2<s_*$ (which, again, also means finiteness of $s_2$ when $s_{\ast}=\infty$) because then the lower bound in \eqref{eqn:20210831-s2} must be attained and $\alpha(s_2)=0$ would follow. 
Towards a contradiction we assume $s_2=s_*$.  
Then, the bounds on $\vartheta$ and $\alpha$ imply $r(s)\to\pi$ as $s\to s_*$ by Lemma \ref{lem:well-posed_centre} \eqref{eqn:20210831-r=pi}. 
However, since $\vartheta$ is strictly increasing it has a nonzero limit as $s\to s_*$. 
Consequently,  Lemma \ref{lem:well-posed_centre} \eqref{eqn:20210831-r<pi} applies, which yields a contradiction. 
\end{proof}

\begin{lemma}\label{lem:small_alpha}
Let $n\in\{2,3\}$ and let $0<\varepsilon<\pi/2$ be arbitrary. 
Then there exists $a_\varepsilon\in\interval{0,\pi/2}$ 
such that for any $r_0\in\Interval{\pi/2,\pi}$ and any $\alpha_0\in\intervaL{0,a_\varepsilon}$ 
the solution $(r,\vartheta,\alpha)\colon\Interval{0,s_*}\to D$ 
with initial data $(r,\vartheta,\alpha)(0)=(r_0,0,\alpha_0)$ as constructed in Lemma~\ref{lem:well-posed_centre} allows for $s_3\in\interval{0,s_*}$ such that
\begin{align*}
\vartheta(s_3)&=0 , & \abs{\alpha(s_3)}&<\varepsilon 
\end{align*}
and such that $\vartheta(s)>0$ for all $s\in\interval{0,s_3}$. 
\end{lemma}

\begin{proof}
Given any $\delta>0$ let $0<\alpha_0<a_\delta$ and $s_2\in\interval{0,s_*}$ be as in Lemma \ref{lem:20210705}. 
Furthermore, consider $s_3\vcentcolon=\sup\{b\in\Interval{0,s_*}\st\forall s\in[0,b]~\vartheta(s)\geq0\}$.
By Lemma \ref{lem:20210705} we have $\alpha>0$ and hence $d\vartheta/ds>0$ in the interval $\Interval{0,s_2}$. Therefore, $s_2<s_3$. 
Concerning the subsequent part of the motion, equation \eqref{eqn:da/ds2} implies
\begin{align}\label{eqn:20210831-da/ds<0}
\frac{d\alpha}{ds}\leq-\frac{(2n-2)\tan(2\vartheta)}{\sin(r)}\cos(\alpha)<0 
\end{align} 
as long as $\vartheta>0$ and $\alpha\leq0$. 
In the interval $\interval{s_2,s_3}$, we thus have $\alpha<0$ and $d\vartheta/ds<0$.
Therefore, the bound $\tan(2\vartheta)\leq\delta$ from Lemma \ref{lem:20210705} extends to the interval $\Interval{s_2,s_3}$.  
Equations \eqref{eqn:dt/ds2} and \eqref{eqn:da/ds2} then yield 
\begin{align}\label{eqn:20210615-da/dt}
\frac{d\alpha}{ds}
&=\Bigl(-(2n-2)\tan(2\vartheta) \cot(\alpha)-(2n-1)\cos(r)\Bigr)\frac{d\vartheta}{ds}
\geq\Bigl(-\delta\cot(\alpha)+1\Bigr) (2n-1) \frac{d\vartheta}{ds}
\end{align}
in $\Interval{s_2,s_3}$. 
Dividing inequality \eqref{eqn:20210615-da/dt} by $(-\delta\cot(\alpha)+1)>0$ and integrating from $s=s_2$ to any $s\in\Interval{s_2,s_3}$ we obtain 
\begin{align*}
\frac{\alpha+\delta\log\bigl(\delta\cos(\alpha)-\sin(\alpha)\bigr)}{\delta^2+1}
-\frac{\delta\log(\delta)}{\delta^2+1}
&\geq(2n-1)\bigl(\vartheta-\vartheta(s_2)\bigr)
\geq-(2n-1)\delta 
\end{align*}
which implies
\begin{align}
\label{eqn:20210615-a<delta}
\frac{\alpha(s)}{\delta^2+1}&\geq-(2n-1)\delta-\frac{\delta\log(\delta+1)}{\delta^2+1}
+\frac{\delta\log(\delta)}{\delta^2+1}
\end{align}
for any $s\in\Interval{s_2,s_3}$. 
The right-hand side of \eqref{eqn:20210615-a<delta} converges to $0$ as $\delta\searrow0$. 
Therefore, we may choose $\delta>0$ such that \eqref{eqn:20210615-a<delta} implies $\alpha(s)>-\varepsilon$ for all $s\in\Interval{s_2,s_3}$. 

We must have $s_3<s_*$ because otherwise, Lemma \ref{lem:well-posed_centre} \eqref{eqn:20210831-r=pi} implies $r(s)\to\pi$ as $s\to s_*$ which by Lemma \ref{lem:well-posed_centre} \eqref{eqn:20210831-r<pi} contradicts the fact that $\alpha(s)$ must have a nonzero limit as $s\to s_*$ due to \eqref{eqn:20210831-da/ds<0}. 
Consequently, $\vartheta(s_3)=0$ and the claim follows. 
\end{proof}

\begin{lemma}\label{lem:small_alpha-iteration}
Let $n\in\{2,3\}$ and let $0<\varepsilon<\pi/2$ and $1\leq k\in\N$ be arbitrary. 
Then there exists $a_{\varepsilon,k}\in\interval{0,\pi/2}$ 
such that for any $\alpha_0\in\intervaL{0,a_{\varepsilon,k}}$ the solution $(r,\vartheta,\alpha)\colon\Interval{0,s_*}\to D$ 
with initial data $(r,\vartheta,\alpha)(0)=(\pi/2,0,\alpha_0)$ as constructed in Lemma~\ref{lem:well-posed_centre} allows 
$0<\sigma_1<\sigma_2<\ldots<\sigma_k<s_*$ such that for all $j\in\{1,\ldots,k\}$ 
\begin{align*}
\vartheta(\sigma_j)&=0, &
\abs{\alpha(\sigma_j)}&\leq\varepsilon.
\end{align*}
\end{lemma}

\begin{proof}
We prove the claim by induction in $k$. 
The case $k=1$ follows by Lemma \ref{lem:small_alpha} with $\sigma_1=s_3$. 
Given any fixed $\varepsilon>0$ let $a_\varepsilon\in\interval{0,\pi/2}$ be as in Lemma \ref{lem:small_alpha}. 
If the statement is true for a certain $k\in\N$, we may apply it with $\min\{\varepsilon,a_\varepsilon\}$ in place of $\varepsilon$ to further obtain $\abs{\alpha(\sigma_k)}\leq a_\varepsilon$. 
Since the problem is invariant under the reflection 
$(r,\vartheta,\alpha)\mapsto(r,-\vartheta,-\alpha)$ we may reapply Lemmata \ref{lem:well-posed_centre}--\ref{lem:small_alpha} with initial data 
$\bigl(r(\sigma_k),0,\abs{\alpha(\sigma_k)}\bigr)$ to conclude that the original solution allows $\sigma_{k+1}\in\interval{\sigma_k,s_*}$ with $\vartheta(\sigma_{k+1})=0$ and $\abs{\alpha(\sigma_{k+1})}\leq\varepsilon$.  
\end{proof}

The following estimate can be interpreted as a complement to Lemma \ref{lem:small_alpha}, which we will crucially employ at the end of the next section.

\begin{lemma}\label{lem:zeros}
Given $r_0\in\interval{\pi/2,\pi}$ and $\alpha_0\in\interval{0,\pi/2}$ let $(r,\vartheta,\alpha)\colon\Interval{0,s_*}\to D$ be the solution to system \eqref{eqn:dr/ds2},\eqref{eqn:dt/ds2},\eqref{eqn:da/ds2} with initial data $(r,\vartheta,\alpha)(0)=(r_0,0,\alpha_0)$. 
If there exists $\sigma\in\interval{0,s_*}$ such that $\vartheta(\sigma)=0$ and $\vartheta(s)>0$ for all $s\in\interval{0,\sigma}$, then
\begin{align*}
\abs[\bigg]{\frac{\alpha(\sigma)}{\alpha_0}}&\geq\max\left\{1,~\frac{2n-1}{2n-2}\abs[\big]{\cos(r_0)}\right\}.  
\end{align*}
Moreover, there exists $s_2\in\interval{0,\sigma}$ such that  
\begin{align*}
\max_{s\in[0,\sigma]}\abs{\vartheta(s)}&=\vartheta(s_2)\geq\frac{\alpha_0}{2n-2}. 
\end{align*}
\end{lemma}

\begin{proof}
By assumption, $\vartheta$ must attain an interior maximum in $[0,\sigma]$. 
In particular, by equations \eqref{eqn:dt/ds2} and \eqref{eqn:da/ds2}, there exists $s_2\in\interval{0,\sigma}$ such that 
\begin{align}\label{eqn:20210915-1}
\vartheta(s_2)&>0, & \alpha(s_2)&=0. 
\end{align}
By equation \eqref{eqn:da/ds2}, we have $d\alpha/ds(s_2)<0$, and in fact $d\alpha/ds(s)<0$  as long as $\vartheta(s)>0$ and $\alpha(s)\leq0$. 
Therefore, $\alpha(s)$ is strictly decreasing in $\Interval{s_2,\sigma}$ and $s_2\in\interval{0,\sigma}$ is uniquely determined by property \eqref{eqn:20210915-1}.  
As in \eqref{eqn:20210612-1} above, we can show 
\begin{align}\label{eqn:20210915>}
\bigl(\cos\bigl(2\vartheta(s_2)\bigr)\bigr)^{n-1}&\leq\cos(\alpha_0). 
\shortintertext{This implies}
\label{eqn:20210915-cos(t2)}
\cos\bigl(2\vartheta(s_2)\bigr)&\leq\bigl(\cos(\alpha_0)\bigr)^{\frac{1}{n-1}}
\leq\cos\Bigl(\frac{\alpha_0}{n-1}\Bigr), 
\end{align}
where the last inequality follows for all $\alpha_0\in\interval{0,\pi/2}$ and all $2\leq n\in\N$ simply by computing 
\begin{align*}
\frac{d}{d\alpha}\bigl(\cos(\alpha)\bigr)^{\frac{1}{n-1}}
&=\frac{-\sin(\alpha)}{n-1}\bigl(\cos(\alpha)\bigr)^{-\frac{n-2}{n-1}}
\leq\frac{-\sin(\alpha)}{n-1} 
\leq\frac{-1}{n-1}\sin\Bigl(\frac{\alpha}{n-1}\Bigr)
=\frac{d}{d\alpha}\cos\Bigl(\frac{\alpha}{n-1}\Bigr).
\end{align*}
In particular, \eqref{eqn:20210915-cos(t2)} implies 
\begin{align}\label{eqn:20210915-t2}
\vartheta(s_2)&\geq\frac{\alpha_0}{2n-2}.
\end{align} 
For any $s\in[s_2,\sigma]$ we have 
$d\vartheta/ds(s)\leq 0$ and so
\begin{align}\label{eqn:20210915=}
\frac{d\alpha}{ds}
&=\Bigl(-(2n-2)\tan(2\vartheta)\cot(\alpha)-(2n-1)\cos(r) \Bigr)\frac{d\vartheta}{ds}
\leq -(2n-2)\tan(2\vartheta)\cot(\alpha)\frac{d\vartheta}{ds}.
\end{align}
Multiplying by $\tan(\alpha)<0$ we obtain exactly the same estimate as in \eqref{eqn:20210612-0}, here valid for $s\in[s_2,\sigma]$. 
Integration implies
$-\log\bigl(\cos\bigl(\alpha(\sigma)\bigr)\bigr) 
\geq-(n-1)\log\bigl(\cos\bigl(2\vartheta(s_2)\bigr)\bigr)$
which is equivalent to 
\begin{align}\label{eqn:20210915<}
 \cos\bigl(\alpha(\sigma)\bigr)
\leq \bigl(\cos\bigl(2\vartheta(s_2)\bigr)\bigr)^{n-1}.
\end{align}
Combining \eqref{eqn:20210915>} and \eqref{eqn:20210915<} we obtain $\cos(\alpha(\sigma))\leq\cos(\alpha_0)$ which implies $\abs{\alpha(\sigma)}\geq \abs{\alpha_0}$. 
In analogy to estimate \eqref{eqn:20210915=} we have $d\alpha/ds\leq-(2n-1)\cos(r_0)d\vartheta/ds$ for $s\in[s_2,\sigma]$. 
Integration then yields 
\begin{align*}
\alpha(\sigma)\leq (2n-1)\cos(r_0)\,\vartheta(s_2)
\leq \tfrac{2n-1}{2n-2}\cos(r_0)\,\alpha_0
\end{align*}
where we used $\cos(r_0)\leq0$ and estimate \eqref{eqn:20210915-t2}. 
Since $\alpha(\sigma)<0<\alpha_0$ the claim follows. 
\end{proof}

\begin{remark}\label{rem:MonotonTheta}
For later reference, we note that it follows from the proof of the previous lemma that 
\[
f\vcentcolon
=\Bigl(\frac{dr}{ds}\Bigr)^{-1}\frac{d\vartheta}{ds}
=\frac{\tan(\alpha)}{\sin(r)}
\]
is weakly decreasing on $[s_2,\sigma]$ and, in addition, it satisfies $-f(\sigma)\geq f(0)$ because by monotonicity in the first coordinate $\pi>r(\sigma)\geq r_0\geq\pi/2$ and by what we just proved $-\alpha(\sigma)\geq\alpha_0$.
\end{remark}

\subsection{Start of the Induction: The \texorpdfstring{$\boldsymbol\infty$}{infinity}-figure}

The construction of an immersed minimal hypertorus marks the start of an iteration process which will ultimately provide a family of infinitely many examples. 
We apply a continuity argument twice: it first yields an auxiliary trajectory that serves as input for when we use the argument a second time. 
The entire approach, which we are about to explain in detail, is summarised visually in Figure \ref{fig:shooting2}. 

\begin{lemma}[{see Figure \ref{fig:shooting2}\;(1)}]\label{lem:20210828}
For every $\rho\in\interval{\pi/2,\pi}$ there exists $a\in\interval{0,\pi/2}$ such that for all 
$r_0\in\Interval{\rho,\pi}$ and all 
$\alpha_0\in\Interval{a,\pi/2}$ the solution $(r,\vartheta,\alpha)\colon\Interval{0,s_*}\to D$  with $(r,\vartheta,\alpha)(0)=(r_0,0,\alpha_0)$ as constructed in Lemma~\ref{lem:well-posed_centre} satisfies 
\(\vartheta(s)>0\) 
for all $s\in\interval{0,s_*}$ and $\alpha(s)\to\pi/2$ as $s\nearrow s_*$.  
\end{lemma}
 
\begin{proof}
Let $\rho\in\interval{\pi/2,\pi}$ be fixed and $r_0\in\Interval{\rho,\pi}$ arbitrary. 
We recall from equations \eqref{eqn:dr/ds2} and \eqref{eqn:dt/ds2} that $r$ is strictly increasing, and that $\vartheta$ is also strictly increasing as long as $\alpha>0$. 
Equation \eqref{eqn:da/ds2} implies $d\alpha/ds(0)=-(2n-1)\cot(r_0)\sin(\alpha_0)>0$. 
Hence there exists $s_\alpha>0$ which is largest with the property that $\alpha$ is strictly increasing in $s\in\Interval{0,s_\alpha}$. 
Let $s_\vartheta$ be the least upper bound of the set of all $b\in\Interval{0,s_*}$ with the property that 
for all $s\in[0,b]$ 
\begin{align}\label{eqn:20210828-2}
\tan\bigl(2\vartheta(s)\bigr)\leq-\cos(\rho)\tan(\alpha_0).
\end{align}  
We claim $s_\alpha\geq s_\vartheta$. 
Equations \eqref{eqn:dt/ds2} and \eqref{eqn:da/ds2} imply 
\begin{align*} 
\frac{d\alpha}{ds}
&=\Bigl(-(2n-2)\tan(2\vartheta)\cot(\alpha)-(2n-1)\cos(r) \Bigr)\frac{d\vartheta}{ds}. 
\end{align*}
If $s_\alpha<s_\vartheta$, then $d\alpha/ds(s_\alpha)=0$. 
Thus, since $-\cos(r)$ and $\tan(\alpha)$ are both strictly increasing in $\Interval{0,s_\alpha}$, 
\begin{align*}
\tan\bigl(2\vartheta(s_\alpha)\bigr)
&=-\frac{(2n-1)}{(2n-2)}\cos\bigl(r(s_\alpha)\bigr)
\tan\bigl(\alpha(s_\alpha)\bigr)
>-\cos(\rho)\tan(\alpha_0)
\end{align*}
in contradiction to \eqref{eqn:20210828-2}. 
Hence, the claim holds and we have indeed $s_\alpha\geq s_\vartheta$. 
In the interval $\Interval{0,s_\alpha}$, the estimate 
\begin{align}\label{eqn:20210828-1}
\frac{d\alpha}{ds}
&\geq \Bigl(-(2n-2)\tan(2\vartheta)\cot(\alpha_0)-(2n-1)\cos(\rho) \Bigr)\frac{d\vartheta}{ds} 
\end{align}
is true. 
Integrating \eqref{eqn:20210828-1} from $s=0$ to any $s\in\interval{0,s_\alpha}$, we obtain 
\begin{align}\label{eqn:20210828-3}
\frac{\pi}{2}-\alpha_0\geq
\alpha-\alpha_0&\geq(n-1)\cot(\alpha_0)\log\bigl(\cos(2\vartheta)\bigr)-(2n-1)\cos(\rho)\vartheta. 
\end{align}
Now, towards a contradiction, suppose that $s_{\vartheta}<s_*$. 
Then, by definition of $s_{\vartheta}$, 
\begin{align*}
\tan\bigl(2\vartheta(s_{\vartheta})\bigr)&=-\cos(\rho)\tan(\alpha_0)=\vcentcolon c_0
  &\Rightarrow~
\cos\bigl(2\vartheta(s_{\vartheta})\bigr)&=
\bigl(c_0^2+1\bigr)^{-\frac{1}{2}}. 
\end{align*}
Multiplying estimate \eqref{eqn:20210828-3} by $-2/\cos(\rho)>0$, we obtain  
\begin{align}\notag
\frac{\pi-2\alpha_0}{-\cos(\rho)}
+2(n-1)\frac{\log\bigl(\cos(2\vartheta)\bigr)}{\cos(\rho)\tan(\alpha_0)}
&\geq(2n-1)2\vartheta
\intertext{which, specified to $s=s_\vartheta$, yields}
\label{eqn:20210921-1}
\frac{\pi-2\alpha_0}{-\cos(\rho)}
+(n-1)\frac{\log\bigl(c_0^2+1\bigr)}{c_0}
&\geq(2n-1)\arctan(c_0).
\end{align}
Since $c_0\to\infty$ as $\alpha_0\nearrow\pi/2$, the left-hand side of \eqref{eqn:20210921-1} converges to zero as $\alpha_0\nearrow\pi/2$ while the right-hand side of \eqref{eqn:20210921-1} converges to $(2n-1)\pi/2$. 
This contradiction proves  $s_\vartheta=s_*$ provided that $\alpha_0$ is sufficiently close to $\pi/2$. 
Since $s_\alpha\geq s_\vartheta$ we also have $s_\alpha=s_*$ in this case.  
In particular, all three variables are strictly increasing for $s\in\Interval{0,s_*}$.  
Hence, $\vartheta(s)\to\vartheta_*>0$ and $\alpha(s)\to\alpha_*>0$ as $s\nearrow s_*$. 
Thus, by Lemma \ref{lem:well-posed_centre} \eqref{eqn:20210831-r<pi}, we have $\lim_{s\to s_*}r(s)<\pi$. 
Moreover, estimate \eqref{eqn:20210828-2} implies $\vartheta_*<\pi/4$. 
Therefore, $\alpha_*=\pi/2$ follows by Lemma \ref{lem:well-posed_centre} \eqref{eqn:20210831-r=pi}. 
\end{proof}

\begin{remark}\label{rem:20210829}
The assumption $r_0\neq\pi/2$ is crucial in the proof of Lemma \ref{lem:20210828}. 
In fact, (e.\,g. by analysing the second derivative of $\alpha$) one can show for $r_0=\pi/2$ and any choice of $\alpha_0\in\interval{0,\pi/2}$ that $\alpha(s)$ is actually \emph{decreasing} while $\vartheta\geq0$. 
In what follows, we specialise to the case $r_0=2\pi/3$. 
\end{remark}

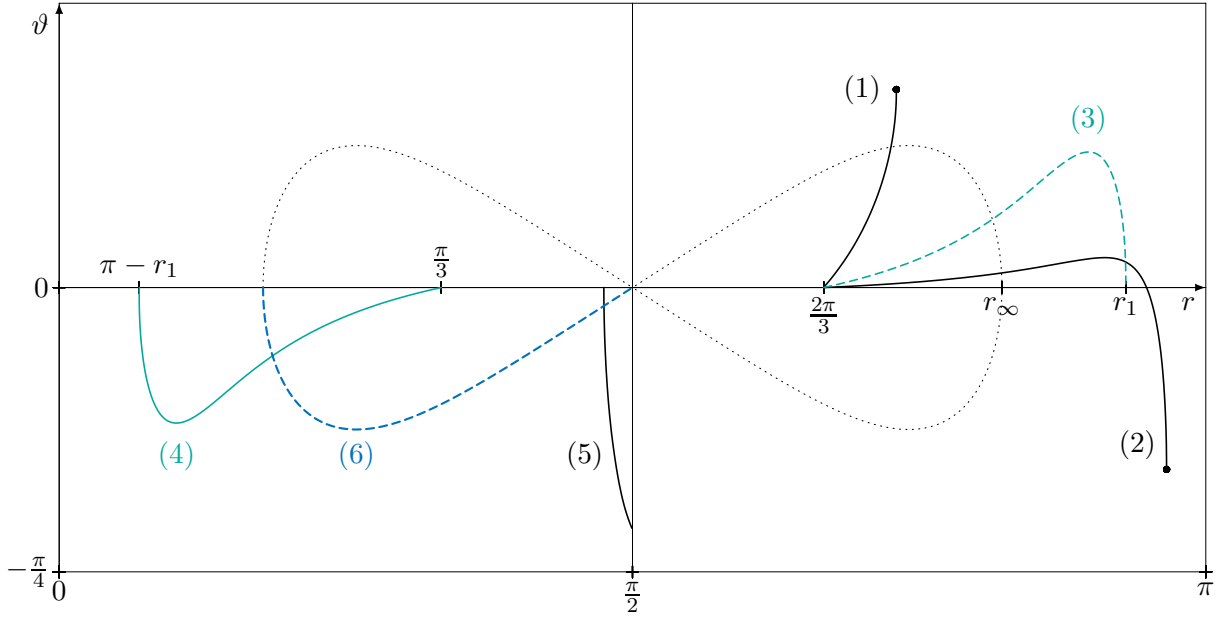
\begin{figure}[t]\centering
\begin{tikzpicture}[line cap=round,line join=round,baseline={(0,0)},scale=\textwidth/3.5cm]
\draw[very thin](0,0)grid [xstep=pi/2,ystep=pi/4](pi,pi/2);
\draw[-latex](0,pi/4)--++(pi,0)node[below left]{$r$};
\draw[-latex](0,0)--(0,pi/2)node[below left]{$\vartheta$};
\draw plot[plus](0,0)node[below]{$0$};
\draw plot[plus](0,0)node[left]{$-\frac{\pi}{4}$};
\draw plot[plus](pi/2,0)node[below]{$\frac{\pi}{2}$};
\draw plot[plus](pi,0)node[below]{$\pi$};
\draw plot[plus](0,pi/4)node[left]{$0$};
\draw plot[vdash](2*pi/3,pi/4)node[below]{$\frac{2\pi}{3}$}; 
\draw plot[vdash](2.9220,pi/4)node[below]{$r_1$};  
\draw plot[vdash](pi-0.5586,pi/4)node[below]{$r_\infty$};  
\draw plot[vdash](pi-2.9220,pi/4)node[above]{$\pi-r_1$};  
\draw plot[vdash](pi/3,pi/4)node[above]{$\frac{\pi}{3}$};  
\draw plot[bullet](2.2937,1.3326)node[left=.4ex]{(1)};
\draw plot[bullet](3.0341,0.2829)node[above left]{(2)};
\draw[color={cmyk,1:yellow,0.5;cyan,1}]
(2.8199,pi/2-0.39) node[above]{(3)}
(pi-2.8199,0.39) node[below]{(4)};
\draw (1.5198,0.39) node[below left]{(5)};
\draw[torustrajectory] (0.8146,0.39) node[below]{(6)};
\draw[smooth,dotted,scale=-1,shift={(-pi,-pi/2)}]plot coordinates { 
(0.5586,0.7854) (0.5587,0.7708) (0.5591,0.7562) (0.5599,0.7417) (0.5609,0.7273) (0.5622,0.7129) (0.5637,0.6987) (0.5656,0.6847) (0.5677,0.6708) (0.5701,0.6572) (0.5727,0.6437) (0.5757,0.6305) (0.5789,0.6176) (0.5823,0.6050) (0.5860,0.5926) (0.5899,0.5806) (0.5941,0.5689) (0.5985,0.5576) (0.6031,0.5466) (0.6079,0.5360) (0.6130,0.5258) (0.6182,0.5159) (0.6237,0.5065) (0.6293,0.4974) (0.6351,0.4887) (0.6411,0.4805) (0.6472,0.4726) (0.6535,0.4652) (0.6600,0.4581) (0.6665,0.4515) (0.6732,0.4453) (0.6800,0.4394) (0.6870,0.4340) (0.6940,0.4289) (0.7011,0.4242) (0.7083,0.4199) (0.7156,0.4159) (0.7230,0.4123) (0.7304,0.4090) (0.7379,0.4061) (0.7455,0.4035) (0.7531,0.4012) (0.7607,0.3993) (0.7683,0.3976) (0.7760,0.3962) (0.7837,0.3951) (0.7914,0.3943) (0.7992,0.3937) (0.8069,0.3933) (0.8146,0.3932) (0.8224,0.3934) (0.8301,0.3937) (0.8378,0.3942) (0.8456,0.3950) (0.8533,0.3959) (0.8610,0.3970) (0.8686,0.3983) (0.8763,0.3997) (0.8839,0.4013) (0.8916,0.4031) (0.8992,0.4050) (0.9067,0.4070) (0.9143,0.4091) (0.9218,0.4114) (0.9293,0.4137) (0.9368,0.4162) (0.9443,0.4188) (0.9517,0.4214) (0.9591,0.4242) (0.9665,0.4270) (0.9738,0.4300) (0.9811,0.4330) (0.9885,0.4360) (0.9957,0.4392) (1.0030,0.4424) (1.0102,0.4457) (1.0174,0.4490) (1.0246,0.4524) (1.0318,0.4558) (1.0389,0.4593) (1.0460,0.4628) (1.0531,0.4664) (1.0602,0.4700) (1.0672,0.4736) (1.0743,0.4773) (1.0813,0.4810) (1.0883,0.4848) (1.0952,0.4886) (1.1022,0.4924) (1.1091,0.4962) (1.1160,0.5001) (1.1230,0.5040) (1.1298,0.5079) (1.1367,0.5118) (1.1436,0.5158) (1.1504,0.5197) (1.1572,0.5237) (1.1641,0.5277) (1.1709,0.5317) (1.1776,0.5357) (1.1844,0.5398) (1.1912,0.5438) (1.1979,0.5479) (1.2047,0.5520) (1.2114,0.5561) (1.2181,0.5602) (1.2248,0.5643) (1.2315,0.5684) (1.2382,0.5725) (1.2449,0.5766) (1.2516,0.5808) (1.2582,0.5849) (1.2649,0.5891) (1.2715,0.5932) (1.2782,0.5974) (1.2848,0.6015) (1.2914,0.6057) (1.2980,0.6099) (1.3046,0.6141) (1.3113,0.6182) (1.3178,0.6224) (1.3244,0.6266) (1.3310,0.6308) (1.3376,0.6350) (1.3442,0.6392) (1.3507,0.6434) (1.3573,0.6476) (1.3639,0.6518) (1.3704,0.6560) (1.3770,0.6602) (1.3835,0.6644) (1.3901,0.6686) (1.3966,0.6728) (1.4031,0.6770) (1.4097,0.6812) (1.4162,0.6854) (1.4227,0.6896) (1.4293,0.6938) (1.4358,0.6980) (1.4423,0.7022) (1.4488,0.7064) (1.4553,0.7106) (1.4618,0.7148) (1.4684,0.7190) (1.4749,0.7233) (1.4814,0.7275) (1.4879,0.7317) (1.4944,0.7359) (1.5009,0.7401) (1.5074,0.7443) (1.5139,0.7485) (1.5204,0.7527) (1.5269,0.7569) (1.5334,0.7611) (1.5399,0.7653) (1.5464,0.7696) (1.5528,0.7738) (1.5593,0.7780) (1.5658,0.7822) (1.5723,0.7864) 
};
\draw[smooth,dotted,xscale=-1,shift={(-pi,0)}]plot coordinates { 
(0.5586,0.7854) (0.5587,0.7708) (0.5591,0.7562) (0.5599,0.7417) (0.5609,0.7273) (0.5622,0.7129) (0.5637,0.6987) (0.5656,0.6847) (0.5677,0.6708) (0.5701,0.6572) (0.5727,0.6437) (0.5757,0.6305) (0.5789,0.6176) (0.5823,0.6050) (0.5860,0.5926) (0.5899,0.5806) (0.5941,0.5689) (0.5985,0.5576) (0.6031,0.5466) (0.6079,0.5360) (0.6130,0.5258) (0.6182,0.5159) (0.6237,0.5065) (0.6293,0.4974) (0.6351,0.4887) (0.6411,0.4805) (0.6472,0.4726) (0.6535,0.4652) (0.6600,0.4581) (0.6665,0.4515) (0.6732,0.4453) (0.6800,0.4394) (0.6870,0.4340) (0.6940,0.4289) (0.7011,0.4242) (0.7083,0.4199) (0.7156,0.4159) (0.7230,0.4123) (0.7304,0.4090) (0.7379,0.4061) (0.7455,0.4035) (0.7531,0.4012) (0.7607,0.3993) (0.7683,0.3976) (0.7760,0.3962) (0.7837,0.3951) (0.7914,0.3943) (0.7992,0.3937) (0.8069,0.3933) (0.8146,0.3932) (0.8224,0.3934) (0.8301,0.3937) (0.8378,0.3942) (0.8456,0.3950) (0.8533,0.3959) (0.8610,0.3970) (0.8686,0.3983) (0.8763,0.3997) (0.8839,0.4013) (0.8916,0.4031) (0.8992,0.4050) (0.9067,0.4070) (0.9143,0.4091) (0.9218,0.4114) (0.9293,0.4137) (0.9368,0.4162) (0.9443,0.4188) (0.9517,0.4214) (0.9591,0.4242) (0.9665,0.4270) (0.9738,0.4300) (0.9811,0.4330) (0.9885,0.4360) (0.9957,0.4392) (1.0030,0.4424) (1.0102,0.4457) (1.0174,0.4490) (1.0246,0.4524) (1.0318,0.4558) (1.0389,0.4593) (1.0460,0.4628) (1.0531,0.4664) (1.0602,0.4700) (1.0672,0.4736) (1.0743,0.4773) (1.0813,0.4810) (1.0883,0.4848) (1.0952,0.4886) (1.1022,0.4924) (1.1091,0.4962) (1.1160,0.5001) (1.1230,0.5040) (1.1298,0.5079) (1.1367,0.5118) (1.1436,0.5158) (1.1504,0.5197) (1.1572,0.5237) (1.1641,0.5277) (1.1709,0.5317) (1.1776,0.5357) (1.1844,0.5398) (1.1912,0.5438) (1.1979,0.5479) (1.2047,0.5520) (1.2114,0.5561) (1.2181,0.5602) (1.2248,0.5643) (1.2315,0.5684) (1.2382,0.5725) (1.2449,0.5766) (1.2516,0.5808) (1.2582,0.5849) (1.2649,0.5891) (1.2715,0.5932) (1.2782,0.5974) (1.2848,0.6015) (1.2914,0.6057) (1.2980,0.6099) (1.3046,0.6141) (1.3113,0.6182) (1.3178,0.6224) (1.3244,0.6266) (1.3310,0.6308) (1.3376,0.6350) (1.3442,0.6392) (1.3507,0.6434) (1.3573,0.6476) (1.3639,0.6518) (1.3704,0.6560) (1.3770,0.6602) (1.3835,0.6644) (1.3901,0.6686) (1.3966,0.6728) (1.4031,0.6770) (1.4097,0.6812) (1.4162,0.6854) (1.4227,0.6896) (1.4293,0.6938) (1.4358,0.6980) (1.4423,0.7022) (1.4488,0.7064) (1.4553,0.7106) (1.4618,0.7148) (1.4684,0.7190) (1.4749,0.7233) (1.4814,0.7275) (1.4879,0.7317) (1.4944,0.7359) (1.5009,0.7401) (1.5074,0.7443) (1.5139,0.7485) (1.5204,0.7527) (1.5269,0.7569) (1.5334,0.7611) (1.5399,0.7653) (1.5464,0.7696) (1.5528,0.7738) (1.5593,0.7780) (1.5658,0.7822) (1.5723,0.7864) 
};
\draw[smooth,dotted,yscale=-1,shift={(0,-pi/2)}]plot coordinates { 
(0.5586,0.7854) (0.5587,0.7708) (0.5591,0.7562) (0.5599,0.7417) (0.5609,0.7273) (0.5622,0.7129) (0.5637,0.6987) (0.5656,0.6847) (0.5677,0.6708) (0.5701,0.6572) (0.5727,0.6437) (0.5757,0.6305) (0.5789,0.6176) (0.5823,0.6050) (0.5860,0.5926) (0.5899,0.5806) (0.5941,0.5689) (0.5985,0.5576) (0.6031,0.5466) (0.6079,0.5360) (0.6130,0.5258) (0.6182,0.5159) (0.6237,0.5065) (0.6293,0.4974) (0.6351,0.4887) (0.6411,0.4805) (0.6472,0.4726) (0.6535,0.4652) (0.6600,0.4581) (0.6665,0.4515) (0.6732,0.4453) (0.6800,0.4394) (0.6870,0.4340) (0.6940,0.4289) (0.7011,0.4242) (0.7083,0.4199) (0.7156,0.4159) (0.7230,0.4123) (0.7304,0.4090) (0.7379,0.4061) (0.7455,0.4035) (0.7531,0.4012) (0.7607,0.3993) (0.7683,0.3976) (0.7760,0.3962) (0.7837,0.3951) (0.7914,0.3943) (0.7992,0.3937) (0.8069,0.3933) (0.8146,0.3932) (0.8224,0.3934) (0.8301,0.3937) (0.8378,0.3942) (0.8456,0.3950) (0.8533,0.3959) (0.8610,0.3970) (0.8686,0.3983) (0.8763,0.3997) (0.8839,0.4013) (0.8916,0.4031) (0.8992,0.4050) (0.9067,0.4070) (0.9143,0.4091) (0.9218,0.4114) (0.9293,0.4137) (0.9368,0.4162) (0.9443,0.4188) (0.9517,0.4214) (0.9591,0.4242) (0.9665,0.4270) (0.9738,0.4300) (0.9811,0.4330) (0.9885,0.4360) (0.9957,0.4392) (1.0030,0.4424) (1.0102,0.4457) (1.0174,0.4490) (1.0246,0.4524) (1.0318,0.4558) (1.0389,0.4593) (1.0460,0.4628) (1.0531,0.4664) (1.0602,0.4700) (1.0672,0.4736) (1.0743,0.4773) (1.0813,0.4810) (1.0883,0.4848) (1.0952,0.4886) (1.1022,0.4924) (1.1091,0.4962) (1.1160,0.5001) (1.1230,0.5040) (1.1298,0.5079) (1.1367,0.5118) (1.1436,0.5158) (1.1504,0.5197) (1.1572,0.5237) (1.1641,0.5277) (1.1709,0.5317) (1.1776,0.5357) (1.1844,0.5398) (1.1912,0.5438) (1.1979,0.5479) (1.2047,0.5520) (1.2114,0.5561) (1.2181,0.5602) (1.2248,0.5643) (1.2315,0.5684) (1.2382,0.5725) (1.2449,0.5766) (1.2516,0.5808) (1.2582,0.5849) (1.2649,0.5891) (1.2715,0.5932) (1.2782,0.5974) (1.2848,0.6015) (1.2914,0.6057) (1.2980,0.6099) (1.3046,0.6141) (1.3113,0.6182) (1.3178,0.6224) (1.3244,0.6266) (1.3310,0.6308) (1.3376,0.6350) (1.3442,0.6392) (1.3507,0.6434) (1.3573,0.6476) (1.3639,0.6518) (1.3704,0.6560) (1.3770,0.6602) (1.3835,0.6644) (1.3901,0.6686) (1.3966,0.6728) (1.4031,0.6770) (1.4097,0.6812) (1.4162,0.6854) (1.4227,0.6896) (1.4293,0.6938) (1.4358,0.6980) (1.4423,0.7022) (1.4488,0.7064) (1.4553,0.7106) (1.4618,0.7148) (1.4684,0.7190) (1.4749,0.7233) (1.4814,0.7275) (1.4879,0.7317) (1.4944,0.7359) (1.5009,0.7401) (1.5074,0.7443) (1.5139,0.7485) (1.5204,0.7527) (1.5269,0.7569) (1.5334,0.7611) (1.5399,0.7653) (1.5464,0.7696) (1.5528,0.7738) (1.5593,0.7780) (1.5658,0.7822) (1.5723,0.7864) 
};
\draw[smooth,trajectory]plot coordinates { 
(2.0944,0.7854) (2.0998,0.7918) (2.1052,0.7982) (2.1106,0.8047) (2.1159,0.8113) (2.1211,0.8180) (2.1263,0.8248) (2.1314,0.8316) (2.1364,0.8385) (2.1414,0.8455) (2.1464,0.8526) (2.1513,0.8598) (2.1561,0.8670) (2.1609,0.8744) (2.1656,0.8818) (2.1702,0.8893) (2.1748,0.8968) (2.1793,0.9045) (2.1837,0.9122) (2.1881,0.9201) (2.1924,0.9280) (2.1966,0.9360) (2.2008,0.9440) (2.2048,0.9522) (2.2089,0.9604) (2.2128,0.9687) (2.2167,0.9771) (2.2204,0.9855) (2.2242,0.9941) (2.2278,1.0027) (2.2313,1.0114) (2.2348,1.0202) (2.2382,1.0290) (2.2415,1.0379) (2.2447,1.0469) (2.2479,1.0560) (2.2509,1.0651) (2.2539,1.0743) (2.2567,1.0836) (2.2595,1.0929) (2.2622,1.1024) (2.2648,1.1118) (2.2673,1.1214) (2.2697,1.1310) (2.2720,1.1406) (2.2742,1.1504) (2.2763,1.1601) (2.2782,1.1700) (2.2801,1.1799) (2.2819,1.1898) (2.2835,1.1998) (2.2851,1.2098) (2.2865,1.2199) (2.2878,1.2300) (2.2890,1.2401) (2.2901,1.2503) (2.2910,1.2606) (2.2918,1.2708) (2.2925,1.2811) (2.2930,1.2914) (2.2934,1.3017) (2.2936,1.3120) (2.2937,1.3223) (2.2937,1.3326) 
};
\draw[smooth,trajectory]plot coordinates { 
(2.0944,0.7854) (2.1021,0.7857) (2.1099,0.7861) (2.1176,0.7864) (2.1253,0.7868) (2.1331,0.7872) (2.1408,0.7875) (2.1485,0.7879) (2.1563,0.7883) (2.1640,0.7887) (2.1717,0.7891) (2.1795,0.7895) (2.1872,0.7899) (2.1949,0.7903) (2.2026,0.7908) (2.2104,0.7912) (2.2181,0.7916) (2.2258,0.7921) (2.2336,0.7926) (2.2413,0.7930) (2.2490,0.7935) (2.2568,0.7940) (2.2645,0.7945) (2.2722,0.7950) (2.2799,0.7955) (2.2877,0.7960) (2.2954,0.7966) (2.3031,0.7971) (2.3109,0.7977) (2.3186,0.7982) (2.3263,0.7988) (2.3340,0.7994) (2.3418,0.8000) (2.3495,0.8006) (2.3572,0.8012) (2.3649,0.8018) (2.3727,0.8025) (2.3804,0.8032) (2.3881,0.8038) (2.3958,0.8045) (2.4036,0.8052) (2.4113,0.8059) (2.4190,0.8066) (2.4267,0.8074) (2.4345,0.8081) (2.4422,0.8089) (2.4499,0.8097) (2.4576,0.8105) (2.4654,0.8113) (2.4731,0.8121) (2.4808,0.8129) (2.4885,0.8138) (2.4962,0.8147) (2.5040,0.8156) (2.5117,0.8165) (2.5194,0.8174) (2.5271,0.8183) (2.5348,0.8193) (2.5426,0.8203) (2.5503,0.8213) (2.5580,0.8223) (2.5657,0.8233) (2.5734,0.8244) (2.5811,0.8255) (2.5889,0.8266) (2.5966,0.8277) (2.6043,0.8288) (2.6120,0.8300) (2.6197,0.8312) (2.6274,0.8323) (2.6352,0.8336) (2.6429,0.8348) (2.6506,0.8360) (2.6583,0.8373) (2.6660,0.8386) (2.6737,0.8399) (2.6815,0.8412) (2.6892,0.8426) (2.6969,0.8439) (2.7046,0.8453) (2.7123,0.8467) (2.7200,0.8480) (2.7278,0.8494) (2.7355,0.8508) (2.7432,0.8522) (2.7509,0.8536) (2.7586,0.8549) (2.7664,0.8563) (2.7741,0.8576) (2.7818,0.8588) (2.7895,0.8601) (2.7972,0.8613) (2.8050,0.8624) (2.8127,0.8636) (2.8204,0.8646) (2.8282,0.8656) (2.8359,0.8664) (2.8437,0.8671) (2.8514,0.8676) (2.8591,0.8679) (2.8669,0.8680) (2.8746,0.8677) (2.8824,0.8671) (2.8901,0.8661) (2.8978,0.8646) (2.9055,0.8626) (2.9132,0.8600) (2.9209,0.8566) (2.9285,0.8523) (2.9361,0.8470) (2.9437,0.8406) (2.9513,0.8327) (2.9589,0.8231) (2.9664,0.8115) (2.9738,0.7975) (2.9811,0.7808) (2.9883,0.7610) (2.9952,0.7374) (3.0019,0.7096) (3.0082,0.6770) (3.0141,0.6391) (3.0196,0.5951) (3.0244,0.5444) (3.0284,0.4870) (3.0315,0.4234) (3.0335,0.3547) (3.0341,0.2829) 
};
\draw[smooth,trajectory,densely dashed,color={cmyk,1:yellow,0.5;cyan,1}]plot coordinates { 
(2.0944,0.7854) (2.1020,0.7871) (2.1096,0.7888) (2.1172,0.7905) (2.1248,0.7923) (2.1324,0.7940) (2.1400,0.7959) (2.1475,0.7977) (2.1551,0.7996) (2.1627,0.8015) (2.1703,0.8035) (2.1778,0.8055) (2.1854,0.8075) (2.1929,0.8096) (2.2005,0.8117) (2.2080,0.8138) (2.2156,0.8160) (2.2231,0.8182) (2.2306,0.8204) (2.2382,0.8227) (2.2457,0.8251) (2.2532,0.8274) (2.2607,0.8298) (2.2682,0.8323) (2.2757,0.8348) (2.2832,0.8374) (2.2907,0.8400) (2.2982,0.8426) (2.3056,0.8453) (2.3131,0.8480) (2.3206,0.8508) (2.3280,0.8537) (2.3355,0.8566) (2.3429,0.8595) (2.3504,0.8625) (2.3578,0.8656) (2.3652,0.8687) (2.3726,0.8718) (2.3800,0.8751) (2.3874,0.8784) (2.3948,0.8817) (2.4022,0.8851) (2.4096,0.8886) (2.4170,0.8921) (2.4243,0.8957) (2.4317,0.8994) (2.4390,0.9031) (2.4464,0.9069) (2.4537,0.9108) (2.4610,0.9147) (2.4684,0.9188) (2.4757,0.9228) (2.4830,0.9270) (2.4903,0.9313) (2.4976,0.9356) (2.5048,0.9400) (2.5121,0.9444) (2.5194,0.9490) (2.5266,0.9536) (2.5339,0.9584) (2.5411,0.9632) (2.5484,0.9680) (2.5556,0.9730) (2.5628,0.9780) (2.5700,0.9832) (2.5772,0.9884) (2.5844,0.9937) (2.5916,0.9991) (2.5988,1.0045) (2.6060,1.0100) (2.6132,1.0157) (2.6204,1.0214) (2.6276,1.0271) (2.6348,1.0330) (2.6420,1.0389) (2.6492,1.0449) (2.6564,1.0509) (2.6636,1.0570) (2.6708,1.0632) (2.6781,1.0693) (2.6853,1.0755) (2.6925,1.0817) (2.6998,1.0879) (2.7071,1.0941) (2.7144,1.1002) (2.7217,1.1063) (2.7291,1.1123) (2.7364,1.1181) (2.7438,1.1238) (2.7513,1.1293) (2.7588,1.1346) (2.7663,1.1396) (2.7738,1.1442) (2.7814,1.1484) (2.7891,1.1521) (2.7967,1.1553) (2.8044,1.1576) (2.8122,1.1592) (2.8199,1.1598) (2.8276,1.1593) (2.8354,1.1575) (2.8430,1.1544) (2.8506,1.1496) (2.8581,1.1429) (2.8655,1.1341) (2.8726,1.1231) (2.8795,1.1097) (2.8862,1.0938) (2.8924,1.0754) (2.8982,1.0544) (2.9036,1.0309) (2.9084,1.0049) (2.9125,0.9764) (2.9160,0.9457) (2.9188,0.9130) (2.9208,0.8789) (2.9220,0.8439) (2.9224,0.8085) (2.9220,pi/4) 
};

\draw[smooth,trajectory,color={cmyk,1:yellow,0.5;cyan,1},scale=-1,shift={(-pi,-pi/2)}]plot coordinates { 
(2.0944,0.7854) (2.1020,0.7871) (2.1096,0.7888) (2.1172,0.7905) (2.1248,0.7923) (2.1324,0.7940) (2.1400,0.7959) (2.1475,0.7977) (2.1551,0.7996) (2.1627,0.8015) (2.1703,0.8035) (2.1778,0.8055) (2.1854,0.8075) (2.1929,0.8096) (2.2005,0.8117) (2.2080,0.8138) (2.2156,0.8160) (2.2231,0.8182) (2.2306,0.8204) (2.2382,0.8227) (2.2457,0.8251) (2.2532,0.8274) (2.2607,0.8298) (2.2682,0.8323) (2.2757,0.8348) (2.2832,0.8374) (2.2907,0.8400) (2.2982,0.8426) (2.3056,0.8453) (2.3131,0.8480) (2.3206,0.8508) (2.3280,0.8537) (2.3355,0.8566) (2.3429,0.8595) (2.3504,0.8625) (2.3578,0.8656) (2.3652,0.8687) (2.3726,0.8718) (2.3800,0.8751) (2.3874,0.8784) (2.3948,0.8817) (2.4022,0.8851) (2.4096,0.8886) (2.4170,0.8921) (2.4243,0.8957) (2.4317,0.8994) (2.4390,0.9031) (2.4464,0.9069) (2.4537,0.9108) (2.4610,0.9147) (2.4684,0.9188) (2.4757,0.9228) (2.4830,0.9270) (2.4903,0.9313) (2.4976,0.9356) (2.5048,0.9400) (2.5121,0.9444) (2.5194,0.9490) (2.5266,0.9536) (2.5339,0.9584) (2.5411,0.9632) (2.5484,0.9680) (2.5556,0.9730) (2.5628,0.9780) (2.5700,0.9832) (2.5772,0.9884) (2.5844,0.9937) (2.5916,0.9991) (2.5988,1.0045) (2.6060,1.0100) (2.6132,1.0157) (2.6204,1.0214) (2.6276,1.0271) (2.6348,1.0330) (2.6420,1.0389) (2.6492,1.0449) (2.6564,1.0509) (2.6636,1.0570) (2.6708,1.0632) (2.6781,1.0693) (2.6853,1.0755) (2.6925,1.0817) (2.6998,1.0879) (2.7071,1.0941) (2.7144,1.1002) (2.7217,1.1063) (2.7291,1.1123) (2.7364,1.1181) (2.7438,1.1238) (2.7513,1.1293) (2.7588,1.1346) (2.7663,1.1396) (2.7738,1.1442) (2.7814,1.1484) (2.7891,1.1521) (2.7967,1.1553) (2.8044,1.1576) (2.8122,1.1592) (2.8199,1.1598) (2.8276,1.1593) (2.8354,1.1575) (2.8430,1.1544) (2.8506,1.1496) (2.8581,1.1429) (2.8655,1.1341) (2.8726,1.1231) (2.8795,1.1097) (2.8862,1.0938) (2.8924,1.0754) (2.8982,1.0544) (2.9036,1.0309) (2.9084,1.0049) (2.9125,0.9764) (2.9160,0.9457) (2.9188,0.9130) (2.9208,0.8789) (2.9220,0.8439) (2.9224,0.8085) (2.9220,pi/4) 
};
\draw[smooth,trajectory]plot coordinates { 
(1.4923,0.7854) (1.4923,0.7708) (1.4924,0.7561) (1.4925,0.7415) (1.4927,0.7269) (1.4930,0.7122) (1.4933,0.6976) (1.4936,0.6830) (1.4940,0.6684) (1.4944,0.6537) (1.4949,0.6391) (1.4955,0.6245) (1.4961,0.6099) (1.4968,0.5953) (1.4975,0.5807) (1.4982,0.5660) (1.4991,0.5514) (1.5000,0.5368) (1.5009,0.5223) (1.5019,0.5077) (1.5030,0.4931) (1.5041,0.4785) (1.5053,0.4639) (1.5066,0.4494) (1.5079,0.4348) (1.5094,0.4203) (1.5109,0.4057) (1.5124,0.3912) (1.5141,0.3767) (1.5159,0.3623) (1.5178,0.3478) (1.5198,0.3333) (1.5218,0.3189) (1.5240,0.3045) (1.5262,0.2900) (1.5286,0.2756) (1.5311,0.2612) (1.5337,0.2468) (1.5366,0.2324) (1.5396,0.2181) (1.5428,0.2038) (1.5463,0.1897) (1.5501,0.1756) (1.5543,0.1616) (1.5588,0.1477) (1.5638,0.1340) (1.5692,0.1205)  
};
\draw[smooth,torustrajectory,densely dashed,thick]plot coordinates { 
(0.5586,0.7854) (0.5587,0.7708) (0.5591,0.7562) (0.5599,0.7417) (0.5609,0.7273) (0.5622,0.7129) (0.5637,0.6987) (0.5656,0.6847) (0.5677,0.6708) (0.5701,0.6572) (0.5727,0.6437) (0.5757,0.6305) (0.5789,0.6176) (0.5823,0.6050) (0.5860,0.5926) (0.5899,0.5806) (0.5941,0.5689) (0.5985,0.5576) (0.6031,0.5466) (0.6079,0.5360) (0.6130,0.5258) (0.6182,0.5159) (0.6237,0.5065) (0.6293,0.4974) (0.6351,0.4887) (0.6411,0.4805) (0.6472,0.4726) (0.6535,0.4652) (0.6600,0.4581) (0.6665,0.4515) (0.6732,0.4453) (0.6800,0.4394) (0.6870,0.4340) (0.6940,0.4289) (0.7011,0.4242) (0.7083,0.4199) (0.7156,0.4159) (0.7230,0.4123) (0.7304,0.4090) (0.7379,0.4061) (0.7455,0.4035) (0.7531,0.4012) (0.7607,0.3993) (0.7683,0.3976) (0.7760,0.3962) (0.7837,0.3951) (0.7914,0.3943) (0.7992,0.3937) (0.8069,0.3933) (0.8146,0.3932) (0.8224,0.3934) (0.8301,0.3937) (0.8378,0.3942) (0.8456,0.3950) (0.8533,0.3959) (0.8610,0.3970) (0.8686,0.3983) (0.8763,0.3997) (0.8839,0.4013) (0.8916,0.4031) (0.8992,0.4050) (0.9067,0.4070) (0.9143,0.4091) (0.9218,0.4114) (0.9293,0.4137) (0.9368,0.4162) (0.9443,0.4188) (0.9517,0.4214) (0.9591,0.4242) (0.9665,0.4270) (0.9738,0.4300) (0.9811,0.4330) (0.9885,0.4360) (0.9957,0.4392) (1.0030,0.4424) (1.0102,0.4457) (1.0174,0.4490) (1.0246,0.4524) (1.0318,0.4558) (1.0389,0.4593) (1.0460,0.4628) (1.0531,0.4664) (1.0602,0.4700) (1.0672,0.4736) (1.0743,0.4773) (1.0813,0.4810) (1.0883,0.4848) (1.0952,0.4886) (1.1022,0.4924) (1.1091,0.4962) (1.1160,0.5001) (1.1230,0.5040) (1.1298,0.5079) (1.1367,0.5118) (1.1436,0.5158) (1.1504,0.5197) (1.1572,0.5237) (1.1641,0.5277) (1.1709,0.5317) (1.1776,0.5357) (1.1844,0.5398) (1.1912,0.5438) (1.1979,0.5479) (1.2047,0.5520) (1.2114,0.5561) (1.2181,0.5602) (1.2248,0.5643) (1.2315,0.5684) (1.2382,0.5725) (1.2449,0.5766) (1.2516,0.5808) (1.2582,0.5849) (1.2649,0.5891) (1.2715,0.5932) (1.2782,0.5974) (1.2848,0.6015) (1.2914,0.6057) (1.2980,0.6099) (1.3046,0.6141) (1.3113,0.6182) (1.3178,0.6224) (1.3244,0.6266) (1.3310,0.6308) (1.3376,0.6350) (1.3442,0.6392) (1.3507,0.6434) (1.3573,0.6476) (1.3639,0.6518) (1.3704,0.6560) (1.3770,0.6602) (1.3835,0.6644) (1.3901,0.6686) (1.3966,0.6728) (1.4031,0.6770) (1.4097,0.6812) (1.4162,0.6854) (1.4227,0.6896) (1.4293,0.6938) (1.4358,0.6980) (1.4423,0.7022) (1.4488,0.7064) (1.4553,0.7106) (1.4618,0.7148) (1.4684,0.7190) (1.4749,0.7233) (1.4814,0.7275) (1.4879,0.7317) (1.4944,0.7359) (1.5009,0.7401) (1.5074,0.7443) (1.5139,0.7485) (1.5204,0.7527) (1.5269,0.7569) (1.5334,0.7611) (1.5399,0.7653) (1.5464,0.7696) (1.5528,0.7738) (1.5593,0.7780) (1.5658,0.7822) (1.5723,0.7864) 
};
\end{tikzpicture}
\caption{Trajectories used to construct the ``$\infty$-figure'': 
\ref{trajectory(1)}--\ref{trajectory(3)} start from fixed $r_0=2\pi/3$ with varying $\alpha_0\in\interval{0,\pi/2}$ while \ref{trajectory(4)}--\ref{trajectory(6)} start with fixed $\alpha_0=-\pi/2$ and $r_0\in\interval{0,\pi/2}$ varies.}
\label{fig:shooting2}%
\begin{enumerate}[label={(\arabic*)}]
\item\label{trajectory(1)} see Lemma \ref{lem:20210828}: 
If $\alpha_0$ is close to $\pi/2$ and $r_0>\pi/2$ then 
$\alpha(s)\to\pi/2$ as $s\nearrow s_*$.  
\item\label{trajectory(2)} see Lemma \ref{lem:small_alpha}: If $\alpha_0$ is close to $0$ then the  trajectory intersects $\{\vartheta=0\}$ for some $s>0$. 
\item\label{trajectory(3)} see Lemma \ref{lem:20210829-green}: 
There exists $\alpha_0\in\interval{0,\pi/2}$ such that 
$(\vartheta,\alpha)(s)\to(0,-\pi/2)$ as $s\nearrow s_*$.
\item\label{trajectory(4)} reflection of trajectory \ref{trajectory(3)}: 
Intersection with $\{\vartheta=0\}\cap\{r<\pi/2\}$ for $r_0=\pi-r_1$. 
\item\label{trajectory(5)} see Lemma \ref{lem:side}: Intersection with $\{r=\pi/2\}$ if $r_0\in\interval{0,\pi/2}$ is close to $\pi/2$ and $\alpha_0=-\pi/2$. 
\item\label{trajectory(6)} see Lemma \ref{lem:20210829-blue}:  Intersection with $\{r=\pi/2\}\cap\{\vartheta=0\}$ for some $r_0\in\interval{0,\pi/2}$ and $\alpha_0=-\pi/2$. 
\end{enumerate}
\end{figure}

\begin{lemma}[{see Figure \ref{fig:shooting2}\;(3)}]\label{lem:20210829-green}
Let $n\in\{2,3\}$ and $r_0=2\pi/3$. 
There exists $\alpha_0\in\interval{0,\pi/2}$ such that the solution 
$(r,\vartheta,\alpha)\colon\Interval{0,s_*}\to D$ with initial data $(r_0,0,\alpha_0)$ satisfies $\vartheta(s)>0$ for all $s\in\interval{0,s_*}$ and 
\begin{align*}
\lim_{s\to s_*}(r,\vartheta,\alpha)(s)=(r_1,0,-\tfrac{\pi}{2})
\end{align*}
for some $r_1\in\interval{r_0,\pi}$. 
\end{lemma}

\begin{proof}
On the one hand, Lemma \ref{lem:small_alpha} implies that if $\alpha_0\in\interval{0,\pi/2}$ is chosen sufficiently small, then there exists $\sigma_1\in\interval{0,s_*}$ such that $\vartheta(\sigma_1)=0$ and $\vartheta(s)>0$ for all $s\in\interval{0,\sigma_1}$.  
On the other hand, for any choice of $\alpha_0\in\interval{0,\pi/2}$ which is sufficiently close to $\pi/2$ Lemma \ref{lem:20210828} implies $\vartheta(s)>0$ for all $s\in\interval{0,s_*}$. 
Therefore, there exists $\tilde\alpha_0\in\interval{0,\pi/2}$ which is largest with the property that for all $\alpha_0\in\interval{0,\tilde\alpha_0}$ the corresponding trajectory intersects $\{\vartheta=0\}$ for some $s=\sigma_1>0$.  

As shown in the proof of Lemma \ref{lem:zeros}, such a trajectory has a unique $s_2\in\interval{0,\sigma_1}$ where $\vartheta$ attains a local maximum and $\alpha(s_2)=0$ vanishes. 
We claim that $\vartheta(s_2)$ stays a positive distance away from $\{\vartheta=\pi/4\}$ for any choice of $\alpha_0\in\interval{0,\tilde\alpha_0}$. 
Indeed, equations \eqref{eqn:dt/ds2} and \eqref{eqn:da/ds2} imply 
$d\alpha/ds\leq-(2n-1)\cos(r)d\vartheta/ds$ 
in $[s_2,\sigma_1]$, where $\cos(r)\leq\cos(r_0)=-1/2$ by monotonicity of $r$ and our choice of $r_0=2\pi/3$.  
Hence, if the trajectory intersects $\{\vartheta=\pi/8\}$ at some $\tau\in\interval{s_2,\sigma_1}$, we obtain $\alpha(\tau)\geq(n-1/2)\pi/8-\pi/2$ by integrating the differential inequality over $[\tau,\sigma_1]$. 
In particular, $\cot(\alpha)<-1/2$ in $\interval{s_2,\tau}$ since $\alpha$ is strictly decreasing there. 
Equations \eqref{eqn:dt/ds2} and \eqref{eqn:da/ds2} then imply 
$d\alpha/ds\leq(n-1)\tan(2\vartheta)d\vartheta/ds$ in $\interval{s_2,\tau}$.  
Integrating this inequality over $\interval{s_2,\tau}$ then proves that $\vartheta(s_2)$ is uniformly bounded away from $\pi/4$.  
Let
\(
(\tilde{r},\tilde{\vartheta},\tilde{\alpha})\colon\Interval{0,\tilde{s}_*}\to D
\) 
be the solution to system \eqref{eqn:dr/ds2},\eqref{eqn:dt/ds2},\eqref{eqn:da/ds2} with initial data $(\tilde{r},\tilde{\vartheta},\tilde{\alpha})(0)=(2\pi/3,0,\tilde{\alpha}_0)$ as defined above. 
If there exists $\sigma_1\in\interval{0,\tilde{s}_*}$ such that $\tilde\vartheta(\sigma_1)=0$, then $\tilde\alpha(\sigma_1)\neq0$ because otherwise $\tilde\vartheta(s)$ and $\tilde\alpha(s)$ vanish identically. 
Then, let us recall that, by definition of $D$, we also have $\tilde\alpha(\sigma_1)>-\pi/2$. 
Therefore, $d\tilde\vartheta/ds(\sigma_1)\neq0$ and ${\sigma}_1$ is a nondegenerate zero of the map $s\mapsto\tilde\vartheta(s)$. 
By continuous dependence on the initial data there exists some $\tilde{\alpha}'_0>\tilde\alpha_0$ such that all solutions with $\alpha_0<\tilde{\alpha}'_0$ still intersect $\{\vartheta=0\}$. 
This contradicts our choice of $\tilde\alpha_0$.
Therefore, $\tilde\vartheta(s)>0$ for all $s\in\interval{0,\tilde{s}_*}$. 
Moreover, again by continuous dependence on the initial data, we have
\begin{align*} 
\lim_{s\to s_*}\tilde\vartheta(s)=0.
\end{align*}
Hence, $\tilde\vartheta$ attains an interior, positive local maximum at some $s_2\in\interval{0,\tilde{s}_*}$, where $\tilde\alpha(s_2)=0$.  
Moreover, as long as $\tilde\vartheta>0$ and $\tilde\alpha\leq0$ equation \eqref{eqn:da/ds2} implies $d\tilde\alpha/ds<0$. 
Therefore, $\tilde\alpha(s)$ is strictly decreasing in $\interval{s_2,\tilde{s}_*}$ and the limit 
\[
\lim_{s\to s_*}\tilde\alpha(s)=\tilde\alpha_*<0 
\]
exists. 
Lemma \ref{lem:well-posed_centre} \eqref{eqn:20210831-r<pi} then implies 
$\lim_{s\to s_*}\tilde{r}(s)<\pi$ and thus necessarily $\tilde\alpha_*=-\pi/2$ by Lemma~\ref{lem:well-posed_centre}~\eqref{eqn:20210831-r=pi}.   
\end{proof}

The following lemma differs from the previous one only in the fact that we assume $r_0=\pi/2$ rather than $r_0=2\pi/3$. 
However, the proof is very different because (as mentioned in Remark \ref{rem:20210829}) we cannot rely on Lemma \ref{lem:20210828} in the case $r_0=\pi/2$. 
Instead we aim for the reflection of the desired trajectory using the results from Section \ref{sec:embedded}. 

\begin{lemma}[{see Figure \ref{fig:shooting2}\;(6)}]\label{lem:20210829-blue}
Let $n\in\{2,3\}$. 
There exists $\alpha_\infty\in\interval{0,\pi/2}$ such that the solution 
$(r,\vartheta,\alpha)\colon\Interval{0,s_\infty}\to D$ with initial data $(r,\vartheta,\alpha)(0)=(\pi/2,0,\alpha_\infty)$ satisfies $\vartheta(s)>0$ for all $s\in\interval{0,s_\infty}$ and 
\begin{align*}
\lim_{s\to s_\infty}(r,\vartheta,\alpha)(s)=(r_\infty,0,-\tfrac{\pi}{2})
\end{align*}
for some $r_\infty\in\interval{\pi/2,\pi}$. 
\end{lemma}

\begin{proof}
As in Section \ref{sec:embedded}, we consider solutions of the system \eqref{eqn:dr/ds2},\eqref{eqn:dt/ds2},\eqref{eqn:da/ds2} with initial data 
\begin{align}\label{eqn:20210829-initial}
r(0)&=r_0\in\interval{0,\tfrac{\pi}{2}},& 
\vartheta(0)&=0,&
\alpha(0)&=-\tfrac{\pi}{2}
\end{align}
(recalling $\vartheta=\theta-\pi/4$) but now we extend the domain $\intervaL{0,\tfrac{\pi}{2}}
\times\intervaL{-\tfrac{\pi}{4},0}
\times[-\tfrac{\pi}{2},0]$ we had employed there to 
\begin{align*}
\tilde{B}\vcentcolon=\intervaL{0,\tfrac{\pi}{2}}
\times\intervaL{-\tfrac{\pi}{4},0}
\times[-\tfrac{\pi}{2},\tfrac{\pi}{2}].
\end{align*}
We recall that the motion is weakly monotone in each of the three coordinates as long as $\alpha\leq0$. 
If $\alpha$ becomes positive then we might lose its monotonicity but we still have $dr/ds\geq 0$. 
As long as $\vartheta<0$ and $r<\pi/2$, equation~\eqref{eqn:da/ds2} ensures the implications 
\begin{align*}
\alpha=0&~\Rightarrow~\frac{d\alpha}{ds}>0, &
\alpha=\frac{\pi}{2}&~\Rightarrow~\frac{d\alpha}{ds}<0.
\end{align*}
This implies that if $\alpha(s')\geq0$ for some $s'\in\interval{0,s_*}$ then $\alpha(s)\in\interval{0,\pi/2}$ for all $s\in\interval{s',s_*}$ and, in addition, $\limsup_{s\to s_*}\alpha(s)<\pi/2$. Furthermore,  
by Lemma~\ref{lem:well-posed} we also have 
$\vartheta(s_*)>-\pi/4$
as long as we stay in the regime $\alpha\leq0$ for the whole lifespan of the solution; 
if instead $\alpha$ ever becomes positive, then the inequality $d\vartheta/ds\geq0$ ensures the inversion of the motion (so that in particular the trajectory will stay a positive distance away from $\left\{\vartheta=-\pi/4\right\}$).
So, to summarize, the trajectory can exit $\tilde{B}$ only at $r=\pi/2$ or at $\vartheta=0$.
On top of that, keeping in mind that the trajectory satisfies the constraint \eqref{eqn:constraint}, since $r$ is weakly increasing and $d\vartheta/ds$ can change its sign at most once, the trajectory exits $\tilde{B}$ in finite time, i.\,e. here we have $s_*<\infty$ as in Lemma \ref{lem:well-posed}.

On the one hand, Lemma \ref{lem:side} states that if $r_0\in\interval{0,\pi/2}$ is chosen sufficiently close to $\pi/2$ then the trajectory exits the domain $\tilde{B}$ at $r(s_*)=\pi/2$. 
On the other hand, 
we may consider a central reflection of the trajectory constructed in Lemma \ref{lem:20210829-green} 
with respect to $r=\pi/2, \vartheta=0, \alpha=0$ and suitably reparametrise to obtain a solution 
$(\tilde{r},\tilde{\vartheta},\tilde{\alpha})\colon\Interval{0,s_*}\to\tilde{B}$ with initial data 
\begin{align*}
(\tilde{r},\tilde{\vartheta},\tilde{\alpha})(0)=(\pi-r_1,0,-\tfrac{\pi}{2})
\end{align*}
where $r_1\in\interval{\pi/2,\pi}$ is as in Lemma \ref{lem:20210829-green} (see also Figure \ref{fig:shooting2}) 
which exits the domain $\tilde{B}$ through $\{\vartheta=0\}$. 
Hence, arguing as in the proof of Theorem \ref{thm:main} there exists some $r_0\in\interval{0,\pi/2}$ such that the corresponding solution with initial data \eqref{eqn:20210829-initial} satisfies $r(s_*)=\pi/2$ and $\vartheta(s_*)=0$. Reflecting back and setting $r_\infty=\pi-r_0$ proves the claim. 
\end{proof}

\subsection{The Iteration process}

\begin{lemma}\label{lem:iteration}
Let $n\in\{2,3\}$. 
For all $1\leq k\in\N$ there exist $0<\hat\alpha_0^{(k)}<\check\alpha_0^{(k)}<\pi/2$ such that $\check\alpha_0^{(k+1)}<\hat\alpha_0^{(k)}$ for all $k$ and such that 
the solutions
\begin{align*}
\bigl(\hat{r}^{(k)},\hat{\vartheta}^{(k)},\hat{\alpha}^{(k)}\bigr)\colon\Interval{0,\hat{s}_*^{(k)}}&\to D, &
\bigl(\check{r}^{(k)},\check{\vartheta}^{(k)},\check{\alpha}^{(k)}\bigr)\colon\Interval{0,\check{s}_*^{(k)}}&\to D 
\intertext{with initial data $\bigl(\pi/2,0,\hat\alpha_0^{(k)}\bigr)$ respectively $\bigl(\pi/2,0,\check\alpha_0^{(k)}\bigr)$ as constructed in Lemma~\ref{lem:well-posed_centre} satisfy }
\lim_{s\to\hat{s}_*^{(k)}}\bigl(\hat{\vartheta}^{(k)},\hat{\alpha}^{(k)}\bigr)(s)&=(-1)^{k}\Bigl(\frac{\pi}{4},\frac{\pi}{2}\Bigr), & 
\lim_{s\to\check{s}_*^{(k)}}\bigl(\check{\vartheta}^{(k)},\check{\alpha}^{(k)}\bigr)(s)&=(-1)^{k}\Bigl(0,\frac{\pi}{2}\Bigr)
\end{align*}
and such that the closures of their images intersect the plane $\{\vartheta=0\}$ exactly $k+1$ times.
\end{lemma} 

\begin{figure}\centering
\begin{tikzpicture}[line cap=round,line join=round,baseline={(0,0)},scale=\textwidth/3.5cm]
\draw[very thin](0,0)grid [xstep=pi/2,ystep=pi/4](pi,pi/2);
\draw[-latex](0,0)--(pi,0)node[below left]{$r$};
\draw[-latex](0,0)--(0,pi/2)node[below left]{$\theta$};
\draw plot[plus](0,0)node[below]{$0$};
\draw plot[plus](pi/2,0)node[below]{$\frac{\pi}{2}$};
\draw plot[plus](0,pi/4)node[left]{$\frac{\pi}{4}$};
\draw[smooth,spheretrajectory,yscale=-1,shift={(0,-pi/2)}]plot coordinates {(1.5708,0.7854) (1.5782,0.7848) (1.5855,0.7843) (1.5929,0.7837) (1.6003,0.7832) (1.6077,0.7826) (1.6151,0.7821) (1.6224,0.7815) (1.6298,0.7810) (1.6372,0.7804) (1.6446,0.7799) (1.6519,0.7793) (1.6593,0.7788) (1.6667,0.7782) (1.6741,0.7777) (1.6814,0.7771) (1.6888,0.7766) (1.6962,0.7760) (1.7036,0.7755) (1.7110,0.7749) (1.7183,0.7744) (1.7257,0.7738) (1.7331,0.7733) (1.7405,0.7727) (1.7478,0.7722) (1.7552,0.7716) (1.7626,0.7711) (1.7700,0.7705) (1.7773,0.7700) (1.7847,0.7694) (1.7921,0.7689) (1.7995,0.7683) (1.8069,0.7678) (1.8142,0.7672) (1.8216,0.7667) (1.8290,0.7661) (1.8364,0.7656) (1.8437,0.7650) (1.8511,0.7645) (1.8585,0.7639) (1.8659,0.7634) (1.8733,0.7628) (1.8806,0.7623) (1.8880,0.7617) (1.8954,0.7612) (1.9028,0.7606) (1.9101,0.7601) (1.9175,0.7595) (1.9249,0.7590) (1.9323,0.7584) (1.9397,0.7579) (1.9470,0.7574) (1.9544,0.7568) (1.9618,0.7563) (1.9692,0.7557) (1.9766,0.7552) (1.9839,0.7547) (1.9913,0.7541) (1.9987,0.7536) (2.0061,0.7530) (2.0135,0.7525) (2.0209,0.7520) (2.0282,0.7515) (2.0356,0.7509) (2.0430,0.7504) (2.0504,0.7499) (2.0578,0.7493) (2.0651,0.7488) (2.0725,0.7483) (2.0799,0.7478) (2.0873,0.7473) (2.0947,0.7467) (2.1021,0.7462) (2.1094,0.7457) (2.1168,0.7452) (2.1242,0.7447) (2.1316,0.7442) (2.1390,0.7437) (2.1464,0.7432) (2.1538,0.7427) (2.1611,0.7422) (2.1685,0.7418) (2.1759,0.7413) (2.1833,0.7408) (2.1907,0.7403) (2.1981,0.7399) (2.2055,0.7394) (2.2129,0.7390) (2.2202,0.7385) (2.2276,0.7381) (2.2350,0.7377) (2.2424,0.7372) (2.2498,0.7368) (2.2572,0.7364) (2.2646,0.7360) (2.2720,0.7356) (2.2794,0.7352) (2.2867,0.7348) (2.2941,0.7345) (2.3015,0.7341) (2.3089,0.7338) (2.3163,0.7334) (2.3237,0.7331) (2.3311,0.7328) (2.3385,0.7325) (2.3459,0.7322) (2.3533,0.7320) (2.3607,0.7317) (2.3681,0.7315) (2.3755,0.7313) (2.3829,0.7311) (2.3903,0.7309) (2.3977,0.7307) (2.4051,0.7306) (2.4125,0.7305) (2.4199,0.7304) (2.4272,0.7303) (2.4346,0.7303) (2.4420,0.7303) (2.4494,0.7303) (2.4568,0.7303) (2.4642,0.7304) (2.4716,0.7305) (2.4790,0.7307) (2.4864,0.7309) (2.4938,0.7311) (2.5012,0.7314) (2.5086,0.7317) (2.5160,0.7321) (2.5234,0.7325) (2.5308,0.7330) (2.5382,0.7335) (2.5456,0.7341) (2.5530,0.7347) (2.5603,0.7354) (2.5677,0.7362) (2.5751,0.7371) (2.5825,0.7380) (2.5899,0.7390) (2.5972,0.7401) (2.6046,0.7413) (2.6120,0.7426) (2.6193,0.7440) (2.6267,0.7455) (2.6341,0.7472) (2.6414,0.7489) (2.6487,0.7508) (2.6561,0.7528) (2.6634,0.7550) (2.6707,0.7574) (2.6780,0.7599) (2.6853,0.7625) (2.6926,0.7654) (2.6999,0.7685) (2.7072,0.7718) (2.7144,0.7753) (2.7217,0.7790) (2.7289,0.7831) (2.7361,0.7873) (2.7432,0.7919) (2.7504,0.7968) (2.7575,0.8021) (2.7646,0.8077) (2.7717,0.8137) (2.7787,0.8201) (2.7857,0.8269) (2.7927,0.8342) (2.7996,0.8419) (2.8065,0.8502) (2.8133,0.8590) (2.8200,0.8685) (2.8267,0.8786) (2.8333,0.8894) (2.8399,0.9009) (2.8463,0.9132) (2.8527,0.9263) (2.8590,0.9403) (2.8651,0.9551) (2.8712,0.9710) (2.8771,0.9879) (2.8828,1.0058) (2.8884,1.0248) (2.8939,1.0450) (2.8992,1.0664) (2.9042,1.0890) (2.9091,1.1130) (2.9137,1.1383) (2.9181,1.1649) (2.9222,1.1929) (2.9260,1.2222) (2.9296,1.2528) (2.9328,1.2847) (2.9357,1.3178) (2.9382,1.3520) (2.9404,1.3872) (2.9422,1.4233) (2.9436,1.4601) (2.9446,1.4974) (2.9452,1.5351) (2.9453,1.5686) };
\draw[smooth,spheretrajectory,xscale=-1,shift={(-pi,0)},dotted]plot coordinates {(1.5708,0.7854) (1.5782,0.7848) (1.5855,0.7843) (1.5929,0.7837) (1.6003,0.7832) (1.6077,0.7826) (1.6151,0.7821) (1.6224,0.7815) (1.6298,0.7810) (1.6372,0.7804) (1.6446,0.7799) (1.6519,0.7793) (1.6593,0.7788) (1.6667,0.7782) (1.6741,0.7777) (1.6814,0.7771) (1.6888,0.7766) (1.6962,0.7760) (1.7036,0.7755) (1.7110,0.7749) (1.7183,0.7744) (1.7257,0.7738) (1.7331,0.7733) (1.7405,0.7727) (1.7478,0.7722) (1.7552,0.7716) (1.7626,0.7711) (1.7700,0.7705) (1.7773,0.7700) (1.7847,0.7694) (1.7921,0.7689) (1.7995,0.7683) (1.8069,0.7678) (1.8142,0.7672) (1.8216,0.7667) (1.8290,0.7661) (1.8364,0.7656) (1.8437,0.7650) (1.8511,0.7645) (1.8585,0.7639) (1.8659,0.7634) (1.8733,0.7628) (1.8806,0.7623) (1.8880,0.7617) (1.8954,0.7612) (1.9028,0.7606) (1.9101,0.7601) (1.9175,0.7595) (1.9249,0.7590) (1.9323,0.7584) (1.9397,0.7579) (1.9470,0.7574) (1.9544,0.7568) (1.9618,0.7563) (1.9692,0.7557) (1.9766,0.7552) (1.9839,0.7547) (1.9913,0.7541) (1.9987,0.7536) (2.0061,0.7530) (2.0135,0.7525) (2.0209,0.7520) (2.0282,0.7515) (2.0356,0.7509) (2.0430,0.7504) (2.0504,0.7499) (2.0578,0.7493) (2.0651,0.7488) (2.0725,0.7483) (2.0799,0.7478) (2.0873,0.7473) (2.0947,0.7467) (2.1021,0.7462) (2.1094,0.7457) (2.1168,0.7452) (2.1242,0.7447) (2.1316,0.7442) (2.1390,0.7437) (2.1464,0.7432) (2.1538,0.7427) (2.1611,0.7422) (2.1685,0.7418) (2.1759,0.7413) (2.1833,0.7408) (2.1907,0.7403) (2.1981,0.7399) (2.2055,0.7394) (2.2129,0.7390) (2.2202,0.7385) (2.2276,0.7381) (2.2350,0.7377) (2.2424,0.7372) (2.2498,0.7368) (2.2572,0.7364) (2.2646,0.7360) (2.2720,0.7356) (2.2794,0.7352) (2.2867,0.7348) (2.2941,0.7345) (2.3015,0.7341) (2.3089,0.7338) (2.3163,0.7334) (2.3237,0.7331) (2.3311,0.7328) (2.3385,0.7325) (2.3459,0.7322) (2.3533,0.7320) (2.3607,0.7317) (2.3681,0.7315) (2.3755,0.7313) (2.3829,0.7311) (2.3903,0.7309) (2.3977,0.7307) (2.4051,0.7306) (2.4125,0.7305) (2.4199,0.7304) (2.4272,0.7303) (2.4346,0.7303) (2.4420,0.7303) (2.4494,0.7303) (2.4568,0.7303) (2.4642,0.7304) (2.4716,0.7305) (2.4790,0.7307) (2.4864,0.7309) (2.4938,0.7311) (2.5012,0.7314) (2.5086,0.7317) (2.5160,0.7321) (2.5234,0.7325) (2.5308,0.7330) (2.5382,0.7335) (2.5456,0.7341) (2.5530,0.7347) (2.5603,0.7354) (2.5677,0.7362) (2.5751,0.7371) (2.5825,0.7380) (2.5899,0.7390) (2.5972,0.7401) (2.6046,0.7413) (2.6120,0.7426) (2.6193,0.7440) (2.6267,0.7455) (2.6341,0.7472) (2.6414,0.7489) (2.6487,0.7508) (2.6561,0.7528) (2.6634,0.7550) (2.6707,0.7574) (2.6780,0.7599) (2.6853,0.7625) (2.6926,0.7654) (2.6999,0.7685) (2.7072,0.7718) (2.7144,0.7753) (2.7217,0.7790) (2.7289,0.7831) (2.7361,0.7873) (2.7432,0.7919) (2.7504,0.7968) (2.7575,0.8021) (2.7646,0.8077) (2.7717,0.8137) (2.7787,0.8201) (2.7857,0.8269) (2.7927,0.8342) (2.7996,0.8419) (2.8065,0.8502) (2.8133,0.8590) (2.8200,0.8685) (2.8267,0.8786) (2.8333,0.8894) (2.8399,0.9009) (2.8463,0.9132) (2.8527,0.9263) (2.8590,0.9403) (2.8651,0.9551) (2.8712,0.9710) (2.8771,0.9879) (2.8828,1.0058) (2.8884,1.0248) (2.8939,1.0450) (2.8992,1.0664) (2.9042,1.0890) (2.9091,1.1130) (2.9137,1.1383) (2.9181,1.1649) (2.9222,1.1929) (2.9260,1.2222) (2.9296,1.2528) (2.9328,1.2847) (2.9357,1.3178) (2.9382,1.3520) (2.9404,1.3872) (2.9422,1.4233) (2.9436,1.4601) (2.9446,1.4974) (2.9452,1.5351) (2.9453,1.5686) };
\draw[smooth,torustrajectory]plot coordinates {  
(1.5708,0.7854) (2.1050,0.7957) (2.1844,0.7971) (2.2362,0.7979) (2.2756,0.7985) (2.3078,0.7989) (2.3352,0.7992) (2.3591,0.7994) (2.3805,0.7996) (2.3998,0.7997) (2.4174,0.7998) (2.4337,0.7998) (2.4489,0.7998) (2.4630,0.7998) (2.4764,0.7997) (2.4890,0.7997) (2.5009,0.7996) (2.5122,0.7994) (2.5231,0.7993) (2.5334,0.7991) (2.5433,0.7989) (2.5529,0.7987) (2.5621,0.7985) (2.5709,0.7982) (2.5795,0.7980) (2.5877,0.7977) (2.5957,0.7974) (2.6035,0.7971) (2.6110,0.7967) (2.6184,0.7964) (2.6255,0.7960) (2.6324,0.7956) (2.6392,0.7952) (2.6458,0.7948) (2.6522,0.7943) (2.6585,0.7939) (2.6646,0.7934) (2.6706,0.7929) (2.6765,0.7924) (2.6823,0.7919) (2.6879,0.7913) (2.6934,0.7907) (2.6989,0.7902) (2.7042,0.7896) (2.7094,0.7889) (2.7145,0.7883) (2.7195,0.7876) (2.7245,0.7870) (2.7294,0.7863) (2.7341,0.7856) (2.7388,0.7848) (2.7435,0.7841) (2.7480,0.7833) (2.7525,0.7825) (2.7569,0.7817) (2.7613,0.7808) (2.7656,0.7799) (2.7698,0.7791) (2.7739,0.7781) (2.7781,0.7772) (2.7821,0.7763) (2.7861,0.7753) (2.7900,0.7743) (2.7939,0.7732) (2.7978,0.7722) (2.8016,0.7711) (2.8053,0.7700) (2.8090,0.7688) (2.8127,0.7677) (2.8163,0.7665) (2.8199,0.7653) (2.8234,0.7640) (2.8269,0.7627) (2.8303,0.7614) (2.8337,0.7601) (2.8371,0.7587) (2.8404,0.7573) (2.8437,0.7559) (2.8470,0.7544) (2.8502,0.7530) (2.8534,0.7514) (2.8566,0.7499) (2.8597,0.7483) (2.8628,0.7467) (2.8659,0.7450) (2.8689,0.7434) (2.8719,0.7416) (2.8749,0.7399) (2.8779,0.7381) (2.8808,0.7362) (2.8837,0.7344) (2.8865,0.7325) (2.8894,0.7305) (2.8922,0.7285) (2.8950,0.7265) (2.8977,0.7244) (2.9005,0.7223) (2.9032,0.7201) (2.9059,0.7179) (2.9085,0.7156) (2.9112,0.7133) (2.9138,0.7109) (2.9164,0.7085) (2.9190,0.7061) (2.9215,0.7035) (2.9241,0.7010) (2.9266,0.6984) (2.9291,0.6957) (2.9315,0.6930) (2.9340,0.6902) (2.9364,0.6873) (2.9388,0.6845) (2.9412,0.6815) (2.9436,0.6785) (2.9459,0.6754) (2.9482,0.6723) (2.9506,0.6691) (2.9529,0.6659) (2.9551,0.6626) (2.9574,0.6592) (2.9596,0.6558) (2.9619,0.6523) (2.9641,0.6488) (2.9663,0.6451) (2.9684,0.6414) (2.9706,0.6377) (2.9727,0.6339) (2.9749,0.6300) (2.9770,0.6260) (2.9791,0.6220) (2.9812,0.6179) (2.9832,0.6137) (2.9853,0.6095) (2.9873,0.6052) (2.9893,0.6008) (2.9913,0.5963) (2.9933,0.5918) (2.9953,0.5872) (2.9973,0.5826) (2.9992,0.5779) (3.0012,0.5731) (3.0031,0.5683) (3.0050,0.5634) (3.0069,0.5584) (3.0088,0.5534) (3.0107,0.5484) (3.0126,0.5433) (3.0145,0.5382) (3.0163,0.5330) (3.0182,0.5278) (3.0200,0.5225) (3.0218,0.5172) (3.0236,0.5118) (3.0255,0.5065) (3.0273,0.5011) (3.0291,0.4958) (3.0308,0.4905) (3.0326,0.4852) (3.0344,0.4800) (3.0362,0.4749) (3.0380,0.4699) (3.0397,0.4650) (3.0415,0.4603) (3.0433,0.4557) (3.0450,0.4513) (3.0468,0.4472) (3.0485,0.4433) (3.0503,0.4397) (3.0520,0.4365) (3.0538,0.4338) (3.0555,0.4315) (3.0573,0.4298) (3.0590,0.4286) (3.0608,0.4283) (3.0625,0.4287) (3.0642,0.4299) (3.0659,0.4322) (3.0676,0.4356) (3.0693,0.4401) (3.0709,0.4460) (3.0725,0.4533) (3.0741,0.4621) (3.0756,0.4725) (3.0770,0.4846) (3.0784,0.4985) (3.0798,0.5142) (3.0810,0.5318) (3.0821,0.5511) (3.0832,0.5722) (3.0841,0.5949) (3.0849,0.6191) (3.0856,0.6447) (3.0862,0.6714) (3.0866,0.6991) (3.0870,0.7274) (3.0871,0.7561) (3.0872,0.7850) 
}; 
\draw[smooth,torustrajectory,shift={(pi,0)},xscale=-1,dotted]plot coordinates {  
(1.5708,0.7854) (2.1050,0.7957) (2.1844,0.7971) (2.2362,0.7979) (2.2756,0.7985) (2.3078,0.7989) (2.3352,0.7992) (2.3591,0.7994) (2.3805,0.7996) (2.3998,0.7997) (2.4174,0.7998) (2.4337,0.7998) (2.4489,0.7998) (2.4630,0.7998) (2.4764,0.7997) (2.4890,0.7997) (2.5009,0.7996) (2.5122,0.7994) (2.5231,0.7993) (2.5334,0.7991) (2.5433,0.7989) (2.5529,0.7987) (2.5621,0.7985) (2.5709,0.7982) (2.5795,0.7980) (2.5877,0.7977) (2.5957,0.7974) (2.6035,0.7971) (2.6110,0.7967) (2.6184,0.7964) (2.6255,0.7960) (2.6324,0.7956) (2.6392,0.7952) (2.6458,0.7948) (2.6522,0.7943) (2.6585,0.7939) (2.6646,0.7934) (2.6706,0.7929) (2.6765,0.7924) (2.6823,0.7919) (2.6879,0.7913) (2.6934,0.7907) (2.6989,0.7902) (2.7042,0.7896) (2.7094,0.7889) (2.7145,0.7883) (2.7195,0.7876) (2.7245,0.7870) (2.7294,0.7863) (2.7341,0.7856) (2.7388,0.7848) (2.7435,0.7841) (2.7480,0.7833) (2.7525,0.7825) (2.7569,0.7817) (2.7613,0.7808) (2.7656,0.7799) (2.7698,0.7791) (2.7739,0.7781) (2.7781,0.7772) (2.7821,0.7763) (2.7861,0.7753) (2.7900,0.7743) (2.7939,0.7732) (2.7978,0.7722) (2.8016,0.7711) (2.8053,0.7700) (2.8090,0.7688) (2.8127,0.7677) (2.8163,0.7665) (2.8199,0.7653) (2.8234,0.7640) (2.8269,0.7627) (2.8303,0.7614) (2.8337,0.7601) (2.8371,0.7587) (2.8404,0.7573) (2.8437,0.7559) (2.8470,0.7544) (2.8502,0.7530) (2.8534,0.7514) (2.8566,0.7499) (2.8597,0.7483) (2.8628,0.7467) (2.8659,0.7450) (2.8689,0.7434) (2.8719,0.7416) (2.8749,0.7399) (2.8779,0.7381) (2.8808,0.7362) (2.8837,0.7344) (2.8865,0.7325) (2.8894,0.7305) (2.8922,0.7285) (2.8950,0.7265) (2.8977,0.7244) (2.9005,0.7223) (2.9032,0.7201) (2.9059,0.7179) (2.9085,0.7156) (2.9112,0.7133) (2.9138,0.7109) (2.9164,0.7085) (2.9190,0.7061) (2.9215,0.7035) (2.9241,0.7010) (2.9266,0.6984) (2.9291,0.6957) (2.9315,0.6930) (2.9340,0.6902) (2.9364,0.6873) (2.9388,0.6845) (2.9412,0.6815) (2.9436,0.6785) (2.9459,0.6754) (2.9482,0.6723) (2.9506,0.6691) (2.9529,0.6659) (2.9551,0.6626) (2.9574,0.6592) (2.9596,0.6558) (2.9619,0.6523) (2.9641,0.6488) (2.9663,0.6451) (2.9684,0.6414) (2.9706,0.6377) (2.9727,0.6339) (2.9749,0.6300) (2.9770,0.6260) (2.9791,0.6220) (2.9812,0.6179) (2.9832,0.6137) (2.9853,0.6095) (2.9873,0.6052) (2.9893,0.6008) (2.9913,0.5963) (2.9933,0.5918) (2.9953,0.5872) (2.9973,0.5826) (2.9992,0.5779) (3.0012,0.5731) (3.0031,0.5683) (3.0050,0.5634) (3.0069,0.5584) (3.0088,0.5534) (3.0107,0.5484) (3.0126,0.5433) (3.0145,0.5382) (3.0163,0.5330) (3.0182,0.5278) (3.0200,0.5225) (3.0218,0.5172) (3.0236,0.5118) (3.0255,0.5065) (3.0273,0.5011) (3.0291,0.4958) (3.0308,0.4905) (3.0326,0.4852) (3.0344,0.4800) (3.0362,0.4749) (3.0380,0.4699) (3.0397,0.4650) (3.0415,0.4603) (3.0433,0.4557) (3.0450,0.4513) (3.0468,0.4472) (3.0485,0.4433) (3.0503,0.4397) (3.0520,0.4365) (3.0538,0.4338) (3.0555,0.4315) (3.0573,0.4298) (3.0590,0.4286) (3.0608,0.4283) (3.0625,0.4287) (3.0642,0.4299) (3.0659,0.4322) (3.0676,0.4356) (3.0693,0.4401) (3.0709,0.4460) (3.0725,0.4533) (3.0741,0.4621) (3.0756,0.4725) (3.0770,0.4846) (3.0784,0.4985) (3.0798,0.5142) (3.0810,0.5318) (3.0821,0.5511) (3.0832,0.5722) (3.0841,0.5949) (3.0849,0.6191) (3.0856,0.6447) (3.0862,0.6714) (3.0866,0.6991) (3.0870,0.7274) (3.0871,0.7561) (3.0872,0.7850) 
}; 
\draw[smooth,torustrajectory,shift={(0,pi/2)},yscale=-1,dotted]plot coordinates {  
(1.5708,0.7854) (2.1050,0.7957) (2.1844,0.7971) (2.2362,0.7979) (2.2756,0.7985) (2.3078,0.7989) (2.3352,0.7992) (2.3591,0.7994) (2.3805,0.7996) (2.3998,0.7997) (2.4174,0.7998) (2.4337,0.7998) (2.4489,0.7998) (2.4630,0.7998) (2.4764,0.7997) (2.4890,0.7997) (2.5009,0.7996) (2.5122,0.7994) (2.5231,0.7993) (2.5334,0.7991) (2.5433,0.7989) (2.5529,0.7987) (2.5621,0.7985) (2.5709,0.7982) (2.5795,0.7980) (2.5877,0.7977) (2.5957,0.7974) (2.6035,0.7971) (2.6110,0.7967) (2.6184,0.7964) (2.6255,0.7960) (2.6324,0.7956) (2.6392,0.7952) (2.6458,0.7948) (2.6522,0.7943) (2.6585,0.7939) (2.6646,0.7934) (2.6706,0.7929) (2.6765,0.7924) (2.6823,0.7919) (2.6879,0.7913) (2.6934,0.7907) (2.6989,0.7902) (2.7042,0.7896) (2.7094,0.7889) (2.7145,0.7883) (2.7195,0.7876) (2.7245,0.7870) (2.7294,0.7863) (2.7341,0.7856) (2.7388,0.7848) (2.7435,0.7841) (2.7480,0.7833) (2.7525,0.7825) (2.7569,0.7817) (2.7613,0.7808) (2.7656,0.7799) (2.7698,0.7791) (2.7739,0.7781) (2.7781,0.7772) (2.7821,0.7763) (2.7861,0.7753) (2.7900,0.7743) (2.7939,0.7732) (2.7978,0.7722) (2.8016,0.7711) (2.8053,0.7700) (2.8090,0.7688) (2.8127,0.7677) (2.8163,0.7665) (2.8199,0.7653) (2.8234,0.7640) (2.8269,0.7627) (2.8303,0.7614) (2.8337,0.7601) (2.8371,0.7587) (2.8404,0.7573) (2.8437,0.7559) (2.8470,0.7544) (2.8502,0.7530) (2.8534,0.7514) (2.8566,0.7499) (2.8597,0.7483) (2.8628,0.7467) (2.8659,0.7450) (2.8689,0.7434) (2.8719,0.7416) (2.8749,0.7399) (2.8779,0.7381) (2.8808,0.7362) (2.8837,0.7344) (2.8865,0.7325) (2.8894,0.7305) (2.8922,0.7285) (2.8950,0.7265) (2.8977,0.7244) (2.9005,0.7223) (2.9032,0.7201) (2.9059,0.7179) (2.9085,0.7156) (2.9112,0.7133) (2.9138,0.7109) (2.9164,0.7085) (2.9190,0.7061) (2.9215,0.7035) (2.9241,0.7010) (2.9266,0.6984) (2.9291,0.6957) (2.9315,0.6930) (2.9340,0.6902) (2.9364,0.6873) (2.9388,0.6845) (2.9412,0.6815) (2.9436,0.6785) (2.9459,0.6754) (2.9482,0.6723) (2.9506,0.6691) (2.9529,0.6659) (2.9551,0.6626) (2.9574,0.6592) (2.9596,0.6558) (2.9619,0.6523) (2.9641,0.6488) (2.9663,0.6451) (2.9684,0.6414) (2.9706,0.6377) (2.9727,0.6339) (2.9749,0.6300) (2.9770,0.6260) (2.9791,0.6220) (2.9812,0.6179) (2.9832,0.6137) (2.9853,0.6095) (2.9873,0.6052) (2.9893,0.6008) (2.9913,0.5963) (2.9933,0.5918) (2.9953,0.5872) (2.9973,0.5826) (2.9992,0.5779) (3.0012,0.5731) (3.0031,0.5683) (3.0050,0.5634) (3.0069,0.5584) (3.0088,0.5534) (3.0107,0.5484) (3.0126,0.5433) (3.0145,0.5382) (3.0163,0.5330) (3.0182,0.5278) (3.0200,0.5225) (3.0218,0.5172) (3.0236,0.5118) (3.0255,0.5065) (3.0273,0.5011) (3.0291,0.4958) (3.0308,0.4905) (3.0326,0.4852) (3.0344,0.4800) (3.0362,0.4749) (3.0380,0.4699) (3.0397,0.4650) (3.0415,0.4603) (3.0433,0.4557) (3.0450,0.4513) (3.0468,0.4472) (3.0485,0.4433) (3.0503,0.4397) (3.0520,0.4365) (3.0538,0.4338) (3.0555,0.4315) (3.0573,0.4298) (3.0590,0.4286) (3.0608,0.4283) (3.0625,0.4287) (3.0642,0.4299) (3.0659,0.4322) (3.0676,0.4356) (3.0693,0.4401) (3.0709,0.4460) (3.0725,0.4533) (3.0741,0.4621) (3.0756,0.4725) (3.0770,0.4846) (3.0784,0.4985) (3.0798,0.5142) (3.0810,0.5318) (3.0821,0.5511) (3.0832,0.5722) (3.0841,0.5949) (3.0849,0.6191) (3.0856,0.6447) (3.0862,0.6714) (3.0866,0.6991) (3.0870,0.7274) (3.0871,0.7561) (3.0872,0.7850) 
}; 
\draw[smooth,torustrajectory,shift={(pi,pi/2)},scale=-1,dotted]plot coordinates {  
(1.5708,0.7854) (2.1050,0.7957) (2.1844,0.7971) (2.2362,0.7979) (2.2756,0.7985) (2.3078,0.7989) (2.3352,0.7992) (2.3591,0.7994) (2.3805,0.7996) (2.3998,0.7997) (2.4174,0.7998) (2.4337,0.7998) (2.4489,0.7998) (2.4630,0.7998) (2.4764,0.7997) (2.4890,0.7997) (2.5009,0.7996) (2.5122,0.7994) (2.5231,0.7993) (2.5334,0.7991) (2.5433,0.7989) (2.5529,0.7987) (2.5621,0.7985) (2.5709,0.7982) (2.5795,0.7980) (2.5877,0.7977) (2.5957,0.7974) (2.6035,0.7971) (2.6110,0.7967) (2.6184,0.7964) (2.6255,0.7960) (2.6324,0.7956) (2.6392,0.7952) (2.6458,0.7948) (2.6522,0.7943) (2.6585,0.7939) (2.6646,0.7934) (2.6706,0.7929) (2.6765,0.7924) (2.6823,0.7919) (2.6879,0.7913) (2.6934,0.7907) (2.6989,0.7902) (2.7042,0.7896) (2.7094,0.7889) (2.7145,0.7883) (2.7195,0.7876) (2.7245,0.7870) (2.7294,0.7863) (2.7341,0.7856) (2.7388,0.7848) (2.7435,0.7841) (2.7480,0.7833) (2.7525,0.7825) (2.7569,0.7817) (2.7613,0.7808) (2.7656,0.7799) (2.7698,0.7791) (2.7739,0.7781) (2.7781,0.7772) (2.7821,0.7763) (2.7861,0.7753) (2.7900,0.7743) (2.7939,0.7732) (2.7978,0.7722) (2.8016,0.7711) (2.8053,0.7700) (2.8090,0.7688) (2.8127,0.7677) (2.8163,0.7665) (2.8199,0.7653) (2.8234,0.7640) (2.8269,0.7627) (2.8303,0.7614) (2.8337,0.7601) (2.8371,0.7587) (2.8404,0.7573) (2.8437,0.7559) (2.8470,0.7544) (2.8502,0.7530) (2.8534,0.7514) (2.8566,0.7499) (2.8597,0.7483) (2.8628,0.7467) (2.8659,0.7450) (2.8689,0.7434) (2.8719,0.7416) (2.8749,0.7399) (2.8779,0.7381) (2.8808,0.7362) (2.8837,0.7344) (2.8865,0.7325) (2.8894,0.7305) (2.8922,0.7285) (2.8950,0.7265) (2.8977,0.7244) (2.9005,0.7223) (2.9032,0.7201) (2.9059,0.7179) (2.9085,0.7156) (2.9112,0.7133) (2.9138,0.7109) (2.9164,0.7085) (2.9190,0.7061) (2.9215,0.7035) (2.9241,0.7010) (2.9266,0.6984) (2.9291,0.6957) (2.9315,0.6930) (2.9340,0.6902) (2.9364,0.6873) (2.9388,0.6845) (2.9412,0.6815) (2.9436,0.6785) (2.9459,0.6754) (2.9482,0.6723) (2.9506,0.6691) (2.9529,0.6659) (2.9551,0.6626) (2.9574,0.6592) (2.9596,0.6558) (2.9619,0.6523) (2.9641,0.6488) (2.9663,0.6451) (2.9684,0.6414) (2.9706,0.6377) (2.9727,0.6339) (2.9749,0.6300) (2.9770,0.6260) (2.9791,0.6220) (2.9812,0.6179) (2.9832,0.6137) (2.9853,0.6095) (2.9873,0.6052) (2.9893,0.6008) (2.9913,0.5963) (2.9933,0.5918) (2.9953,0.5872) (2.9973,0.5826) (2.9992,0.5779) (3.0012,0.5731) (3.0031,0.5683) (3.0050,0.5634) (3.0069,0.5584) (3.0088,0.5534) (3.0107,0.5484) (3.0126,0.5433) (3.0145,0.5382) (3.0163,0.5330) (3.0182,0.5278) (3.0200,0.5225) (3.0218,0.5172) (3.0236,0.5118) (3.0255,0.5065) (3.0273,0.5011) (3.0291,0.4958) (3.0308,0.4905) (3.0326,0.4852) (3.0344,0.4800) (3.0362,0.4749) (3.0380,0.4699) (3.0397,0.4650) (3.0415,0.4603) (3.0433,0.4557) (3.0450,0.4513) (3.0468,0.4472) (3.0485,0.4433) (3.0503,0.4397) (3.0520,0.4365) (3.0538,0.4338) (3.0555,0.4315) (3.0573,0.4298) (3.0590,0.4286) (3.0608,0.4283) (3.0625,0.4287) (3.0642,0.4299) (3.0659,0.4322) (3.0676,0.4356) (3.0693,0.4401) (3.0709,0.4460) (3.0725,0.4533) (3.0741,0.4621) (3.0756,0.4725) (3.0770,0.4846) (3.0784,0.4985) (3.0798,0.5142) (3.0810,0.5318) (3.0821,0.5511) (3.0832,0.5722) (3.0841,0.5949) (3.0849,0.6191) (3.0856,0.6447) (3.0862,0.6714) (3.0866,0.6991) (3.0870,0.7274) (3.0871,0.7561) (3.0872,0.7850) 
}; 
\end{tikzpicture}
\caption{The trajectories of $\bigl(\hat{r}^{(1)},\hat{\vartheta}^{(1)}\bigr)$ (orange curve) and   $\bigl(\check{r}^{(2)},\check{\vartheta}^{(2)}\bigr)$ (blue curve) and their respective reflections (dotted curves), as described within the proof of Theorem~\ref{thm:mainImmersed} and Theorem~\ref{thm:mainHyperspheres}, in the case $n=2$. }%
\label{fig:immersed_torus2}%
\end{figure}
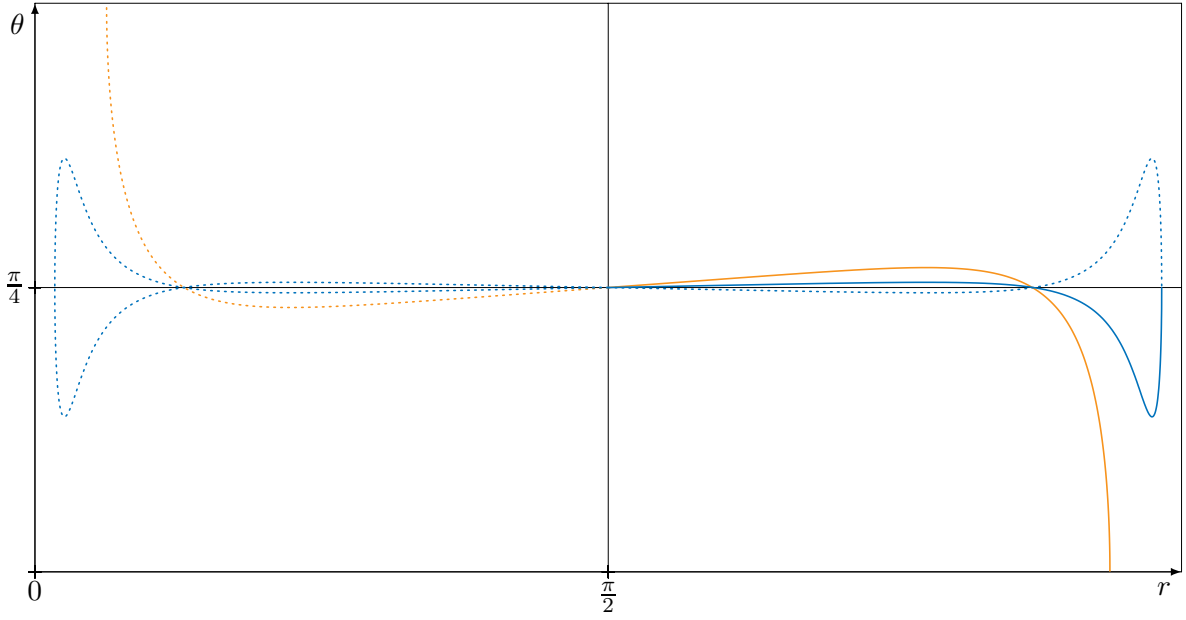 
 
As an example, we visualise the trajectories of $\bigl(\hat{r}^{(1)},\hat{\vartheta}^{(1)}\bigr)$ and  $\bigl(\check{r}^{(2)},\check{\vartheta}^{(2)}\bigr)$ for $n=2$ in Figure~\ref{fig:immersed_torus2}. 

\begin{proof}
\emph{Construction of $\check{\alpha}_0^{(1)}$.}
In Lemma \ref{lem:20210829-blue} we obtained the ``$\infty$-figure''  $(r,\vartheta,\alpha)\colon\Interval{0,s_\infty}\to D$, a solution of system \eqref{eqn:dr/ds2},\eqref{eqn:dt/ds2},\eqref{eqn:da/ds2} with initial data 
$(r,\vartheta,\alpha)(0)=(\pi/2,0,\alpha_\infty)$ for some $\alpha_\infty\in\interval{0,\pi/2}$. 
The trajectory of this solution exits the domain $D=\Interval{\tfrac{\pi}{2},\pi}\times\interval{-\tfrac{\pi}{4},\tfrac{\pi}{4}}\times\interval{-\tfrac{\pi}{2},\tfrac{\pi}{2}}$ via 
\[
\lim_{s\to s_\infty}(r,\vartheta,\alpha)(s)=(r_\infty,0,-\tfrac{\pi}{2})   
\]
for some $r_\infty\in\interval{\pi/2,\pi}$. 
Actually, we did not show that for any $\alpha_0\in\interval{0,\alpha_\infty}$ the corresponding trajectory will also intersect $\{\vartheta=0\}$ for $s>0$, but Lemma~\ref{lem:small_alpha} states that such intersections actually occur for any sufficiently small $\alpha_0$. 
Therefore, we may define $\check\alpha_0\in\intervaL{0,\alpha_\infty}$ which is largest with the property that 
for all $\alpha_0\in\interval{0,\check{\alpha}_0}$ the corresponding 
trajectory intersects $\{\vartheta=0\}$ for some $s=\sigma_1>0$. 
(Based on our numerical simulations we expect $\check{\alpha}_0=\alpha_\infty$ because otherwise we would obtain two distinct $\infty$-figures, which we did not observe.)

Any such trajectory has a unique $s_2\in\interval{0,\sigma_1}$ where $\vartheta$ attains a local maximum (so that $\alpha(s_2)=0$), as was shown in the discussion before \eqref{eqn:20210915-1}. If we consider a suitably small open neighbourhood of the segment of initial data $\left\{(\pi/2, 0,\alpha_0) \st \alpha_0\in\Interval{\check{\alpha}_0/2,\check{\alpha}_0}\right\}$, by looking at the right-hand side of the ODE in question all the first derivatives of the motion are bounded uniformly in the closure of the domain and so it takes a definite amount of time to leave it.
Hence, $r$ has to increase a bit before $\alpha$ reaches zero: 
there exists some $\delta>0$ such that $r(s_2)>\pi/2+\delta$ for any choice of initial $\alpha_0\in\Interval{\check{\alpha}_0/2,\check{\alpha}_0}$.  
This allows us to show (like we did in the proof of Lemma \ref{lem:20210829-green}) that $\vartheta(s_2)$ is uniformly bounded away from $\pi/4$. Let 
\begin{align}\label{eqn:k=1check}
(\check{r},\check{\vartheta},\check{\alpha})\colon\Interval{0,\check{s}_*}\to D
\end{align} 
be the solution to system \eqref{eqn:dr/ds2},\eqref{eqn:dt/ds2},\eqref{eqn:da/ds2} with initial data $(\check{r},\check{\vartheta},\check{\alpha})(0)=(\pi/2,0,\check{\alpha}_0)$ as defined above. 
With the same argument as in the proof of Lemma \ref{lem:20210829-green}, we can then show $\check\vartheta(s)>0$ for all $0<s<\check{s}_*$ and 
\begin{align}
\lim_{s\to \check{s}_*}(\check\vartheta,\check\alpha)(s)&=(0,-\tfrac{\pi}{2}).
\end{align}
In particular, the closure of the trajectory of \eqref{eqn:k=1check} intersects $\{\vartheta=0\}$ exactly twice and we may define $\check{\alpha}_0^{(1)}\vcentcolon=\check{\alpha}_0$.

\emph{Construction of $\hat{\alpha}_0^{(1)}$.}
Let $(r,\vartheta,\alpha)\colon\Interval{0,s_*}\to D$ be the solution with $(r,\vartheta,\alpha)(0)=(\pi/2,0,\alpha_0)$ for some $\alpha_0\in\interval{0,\check{\alpha}_0}$ to be chosen. 
By definition of $\check{\alpha}_0$, the set  $\vartheta^{-1}(\{0\})\setminus\{0\}$ is nonempty and 
Lemma~\ref{lem:20210612-1} implies that $s=0$ is an isolated zero of $\vartheta(s)$. 
Therefore, since $\vartheta$ is continuous
\[\sigma_1\vcentcolon=\min\bigl(\vartheta^{-1}(\{0\})\setminus\{0\}\bigr)\]
is well-defined. 
Then, $\alpha(\sigma_1)\leq0$ since $\vartheta(s)\geq0$ in $[0,\sigma_1]$. 
In fact, $\alpha(\sigma_1)<0$ because otherwise, $\alpha$ and $\vartheta$ would both vanish identically by ODE uniqueness. 
Hence, we may define 
$s_\alpha\vcentcolon=\sup\{b\in\interval{\sigma_1,s_*}\st\forall s\in\Interval{\sigma_1,b}~-\pi/2<\alpha(s)<0\}$.
For $\alpha_0=\check{\alpha}_0$ we just set $s_\alpha=s_*$ and recall that
in this case we have 
\begin{align}\label{eqn:20210901-1}
\lim_{s\to s_\alpha}\alpha(s)=-\frac{\pi}{2}.
\end{align}
For any sufficiently small $\alpha_0>0$ however, 
Lemma \ref{lem:small_alpha-iteration} implies $s_\alpha<s_*$ and $\alpha(s_\alpha)=0$. 
Therefore, there exists $\hat\alpha_0\in\intervaL{0,\check{\alpha}_0}$ which is smallest  
with the property that 
for all $\alpha_0\in\intervaL{\hat\alpha_0,\check\alpha_0}$ 
the corresponding solution has property \eqref{eqn:20210901-1}. 
Let 
\begin{align}\label{eqn:k=1hat}
(\hat{r},\hat{\vartheta},\hat{\alpha})\colon\Interval{0,\hat{s}_*}&\to D
\end{align}
be the solution with initial data $(\hat{r},\hat{\vartheta},\hat{\alpha})(0)=(\pi/2,0,\hat\alpha_0)$. 
If $s_{\hat{\alpha}}<\hat{s}_*$ then $\hat\alpha(s_{\hat{\alpha}})=0$ and equation~\eqref{eqn:da/ds2} implies $d\hat\alpha/ds(s_{\hat{\alpha}})\neq0$, that is, 
$s=s_{\hat{\alpha}}$ is a nondegenerate zero of $\hat\alpha$. 
Moreover, $\hat\alpha$ is bounded away from $-\pi/2$ in $\interval{\hat\sigma_1,s_{\hat\alpha}}$. 
Therefore, there exists $\alpha_0>\hat\alpha_0$ such that the corresponding trajectory still satisfies $\alpha(s_\alpha)=0$ in contradiction to our choice of $\hat\alpha_0$. 
Hence, we must have $s_{\hat{\alpha}}=\hat{s}_*$. 
In particular, $\hat\alpha(s)<0$ for all $s\in\interval{\sigma_1,\hat{s}_*}$, 
which also implies $\hat{s}_*<\infty$.
Then, by equation \eqref{eqn:dt/ds2}, $\hat\vartheta(s)$ is strictly decreasing and negative in $\interval{\sigma_1,\hat{s}_*}$. 
This implies on the one hand that $\hat\vartheta(s)$ converges to a limit $\hat\vartheta_*<0$  as $s\to \hat{s}_*$ and on the other hand that 
the trajectory of \eqref{eqn:k=1hat} intersects $\{\vartheta=0\}$ exactly twice: at $s=0$ and at $s=\sigma_1$. 
By Lemma \ref{lem:well-posed_centre}, 
$\hat{r}(s)\leq\hat{r}_*<\pi$ for all $s\in\Interval{0,\hat{s}_*}$ and we have 
\begin{align}
\label{eqn:20210902-t}
\hat\vartheta_*=\lim_{s\to\hat{s}_*}\hat\vartheta(s)&=-\frac{\pi}{4}  
\shortintertext{or}
\label{eqn:20210902-a}
\limsup_{s\to\hat{s}_*}\bigl(-\hat\alpha(s)\bigr)&=\frac{\pi}{2}.
\end{align}
We claim that in fact both things are true. 
If on the contrary $0>\hat\vartheta_*>-\pi/4$ then by \eqref{eqn:20210902-a} there is a sequence $s_k\to\hat{s}_*$ such that $\hat\alpha(s_k)\to-\pi/2$ as $k\to\infty$. 
For sufficiently large $k\in\N$, equation \eqref{eqn:da/ds2} and the monotonicity of $\hat{r}(s)$ imply
\begin{align}\label{eqn:20210907-a}
\frac{d\hat\alpha}{ds}(s_k)&\leq-\frac{(2n-2)\tan(2\hat\vartheta_*)}{\sin(\hat{r}_*)}\cos\bigl(\hat\alpha(s_k)\bigr)-(2n-1)\cot\bigl(\hat{r}(s_k)\bigr)\sin\bigl(\hat\alpha(s_k)\bigr)
\\   \label{eqn:20210907-a-2}
&\leq -\cot\bigl(\hat{r}(s_k)\bigr)\sin\bigl(\hat\alpha(s_k)\bigr)<0
\end{align}
because as soon as $\cos\bigl(\hat\alpha(s_k)\bigr)$ is sufficiently small, the second term in \eqref{eqn:20210907-a} dominates the first. 
In particular, for any $k$ large enough, there is $\tilde{s}_k>s_k$ such that $\hat\alpha(s)$ is strictly decreasing in $\Interval{s_k,\tilde{s}_k}$. 
Yet, combining this fact with the monotonicity of $\hat{r}(s)$ we obtain that actually   
\begin{align} 
\frac{d\hat\alpha}{ds}(s)&\leq -\cot\bigl(\hat{r}(s_k)\bigr)\sin\bigl(\hat\alpha(s_k)\bigr)<0
\end{align}
for all $s\in\Interval{s_k,\tilde{s}_k}$ and, therefore, we may choose $\tilde{s}_k=\hat{s}_*$. 
So, a posteriori, $\hat\alpha(s)$ is strictly decreasing in a left neighbourhood of $\hat{s}_*$ and \eqref{eqn:20210902-a} is in fact a limit. 
Then, since we assume $\hat\vartheta_*>-\pi/4$, we can extend the solution to the interval $[0,\hat{s}_*+\epsilon]$ for some small $\epsilon>0$ (allowing values $\hat\alpha<-\pi/2$) and the trajectory will ``turn clockwise'' in the sense that 
\begin{align*}
\frac{d\hat\alpha}{ds}(\hat{s}_{*})&= (2n-1)\cot(\hat{r}_{*})<0.
\end{align*} 
This implies that the function $[0,\hat{s}_*+\epsilon]\ni s\mapsto\hat\alpha(s)+\pi/2$ has a nondegenerate zero at $s=\hat{s}_*$. 
Hence, by continuous dependence on the initial data there exists $\hat\alpha'_0<\hat\alpha_0$ such that the (extended) solutions corresponding to any $\alpha_0 \in\intervaL{\hat\alpha'_0,\hat\alpha_0}$ still intersect $\{\alpha=-\pi/2\}$ in contradiction to our choice of $\hat\alpha_0$. 

Instead, if \eqref{eqn:20210902-t} was true but \eqref{eqn:20210902-a} was not, then equation \eqref{eqn:da/ds2} would imply that $d\hat\alpha/ds$ is unbounded (with divergent integral) as $s\to\hat{s}_*$ which again yields a contradiction. 
Therefore, \eqref{eqn:20210902-t} and \eqref{eqn:20210902-a} both hold. 
In fact, we can show that \eqref{eqn:20210902-a} is actually a limit. 
If, on the contrary, there was a sequence $s_k\nearrow s_*$ as $k\to\infty$ with $0>\hat\alpha(s_k)\geq\hat\alpha_*>-\pi/2$ for all $k\in\N$, then since 
$-\tan(2\hat\vartheta(s_k))\to+\infty$ by \eqref{eqn:20210902-t}, equation \eqref{eqn:da/ds2} implies 
\begin{align}\label{eqn:20210907-da>0}
\frac{d\hat\alpha}{ds}(s_k)&\geq-\frac{(2n-2)\tan\bigl(2\hat\vartheta(s_k)\bigr)}{\sin\bigl(\hat{r}(s_k)\bigr)}\cos(\hat\alpha_*)-(2n-1)\cot(\hat r_*)\sin\bigl(\hat\alpha(s_k)\bigr)>0
\end{align}
if $k$ is sufficiently large because then the first term in \eqref{eqn:20210907-da>0} eventually dominates the second. 
In particular, there exists $\tilde{s}_k>s_k$ such that $\hat{\alpha}(s)$ is strictly increasing in $\Interval{s_k,\tilde{s}_k}$.  
But then, since $\cos(\hat\alpha(s))$ and $\hat{r}(s)$ are strictly increasing, we obtain that actually $d\hat\alpha/ds(s)>0$ for all $s\in\Interval{s_k,\tilde{s}_k}$ with a lower bound only depending on $k$ (and not on $\tilde{s}_k$). 
Hence, we may choose $\tilde{s}_k=\hat{s}_*$ and conclude $\hat\alpha(s)\geq\hat\alpha(s_k)$ for all $s\in\Interval{s_k,\hat{s}_*}$ in contradiction to \eqref{eqn:20210902-a}.
 
With $\check{\alpha}_0^{(1)}\vcentcolon=\check{\alpha}_0$ and $\hat{\alpha}_0^{(1)}\vcentcolon=\hat{\alpha}_0$ the statement follows in the case $k=1$ since we showed that the solutions \eqref{eqn:k=1hat} and \eqref{eqn:k=1check} satisfy the desired properties.   
We now proceed by induction and suppose that the statement holds for some $1\leq k\in\N$. 
By the way we set up the construction, we have that for all $\alpha_0\in\interval{0,\check{\alpha}_0^{(k)}}$ the closure of the image of the corresponding solution intersects $\{\vartheta=0\}$ at least $k+1$ times for some $0=\sigma_0<\sigma_1<\ldots<\sigma_k< s_*$. 

\emph{Construction of $\check{\alpha}_0^{(k+1)}$.} 
On the one hand, the trajectory corresponding to $\alpha_0=\hat\alpha_0^{(k)}<\check{\alpha}_0^{(k)}$ intersects $\{\vartheta=0\}\cap D$ exactly $k+1$ times ($s=0$ included). 
On the other hand, Lemma \ref{lem:small_alpha-iteration} implies that if $\alpha_0\in\interval{0,\hat\alpha_0^{(k)}}$ is sufficiently small, then the corresponding trajectory intersects $\{\vartheta=0\}\cap D$ at least $k+2$ times. 
Therefore, there exists $\check{\alpha}_0^{(k+1)}\in\interval{0,\hat\alpha_0^{(k)}}$ which is largest with the property that for all $\alpha_0\in\interval{0,\check\alpha_0^{(k+1)}}$ the corresponding trajectory intersects $\{\vartheta=0\}\cap D$ at least $k+2$ times. 
Let 
\begin{align*}
\bigl(\check{r}^{(k+1)},\check{\vartheta}^{(k+1)},\check{\alpha}^{(k+1)}\bigr)\colon\Interval{0,\check{s}_*^{(k+1)}}\to D
\end{align*}
be the solution to system \eqref{eqn:dr/ds2},\eqref{eqn:dt/ds2},\eqref{eqn:da/ds2} with initial data $\bigl(\pi/2,0,\check{\alpha}_0^{(k+1)}\bigr)$ as defined above. 
With the same argument as above we can show that the trajectory of this solution intersects $\{\vartheta=0\}\cap D$ exactly $k+1$ times for some $0=\sigma_0<\sigma_1<\ldots<\sigma_k< s_*$ and, furthermore,   
\[
\bigl(\check{\vartheta}^{(k+1)},\check{\alpha}^{(k+1)}\bigr)(s)\to(-1)^{k+1}\bigl(0,\tfrac{\pi}{2}\bigr)
\] 
as $s\to \check{s}_*^{(k+1)}$. 
In particular, the closure of the trajectory intersects $\{\vartheta=0\}$ exactly $k+2$ times. 

\emph{Construction of $\hat{\alpha}_0^{(k+1)}$.} 
The claim follows by exactly the same arguments as in the construction of  $\hat{\alpha}_0^{(1)}$ replacing $\check\alpha^{(1)}_0$ by $\check{\alpha}_0^{(k+1)}$ and $\sigma_1$ by $\sigma_{k+1}\vcentcolon=\min\bigl(\vartheta^{-1}(\{0\})\setminus\{0,\sigma_1,\ldots,\sigma_k\}\bigr)$, plus accounting for the obvious sign changes depending on the parity of $k$.
\end{proof}

\begin{remark}\label{rem:AlphaGoesToZero}
In the notation of Lemma \ref{lem:iteration} above, we have $\hat{\alpha}_0^{(k)}\to0$ and $\check{\alpha}_0^{(k)}\to0$ as $k\to\infty$. 
Indeed, if either of these sequences was bounded from below by a positive constant, then
by Lemma~\ref{lem:zeros} the same lower bound would hold for $\abs{\alpha}$ evaluated at any of the zeros $0<\sigma_1<\ldots<\sigma_k< s_*$ of $\vartheta$, and the local extremal values of $\vartheta$ in between (alternating between local maxima and minima) are also uniformly bounded from below in absolute value. 
Since $s\mapsto r(s)$ is bounded and strictly increasing, there exists some sequence of integers $1\leq j_k\leq k/2$ such that $r(\sigma_{j_k})-r(\sigma_{j_k-1})\to0$ as $k\to\infty$. 
Recalling the content of Remark \ref{rem:MonotonTheta}, we thus obtain 
for every $\ell=j_k,j_{k+1},\ldots,k$
\[
\abs[\bigg]{\Bigl(\frac{dr}{ds}\Bigr)^{-1}\frac{d\vartheta}{ds}}(\sigma_{\ell})\to\infty 
\]
as $k\to\infty$, 
which, however, is only possible if either $r(\sigma_{\ell})\to\pi$ or $\abs{\alpha(\sigma_{\ell})}\to\pi/2$. 
By possibly employing the first conclusion of Lemma $\ref{lem:zeros}$ in the first case, we 
obtain (in either case) that, for $k$ large enough, $\alpha(\sigma_{k-1})$ would, depending on the parity of $k$, eventually either be above the threshold $a$ provided by Lemma \ref{lem:20210828} or below $-a$, hence forcing blow-up (before reaching $\sigma_k$) and thus a contradiction.  
\end{remark}

\begin{figure}
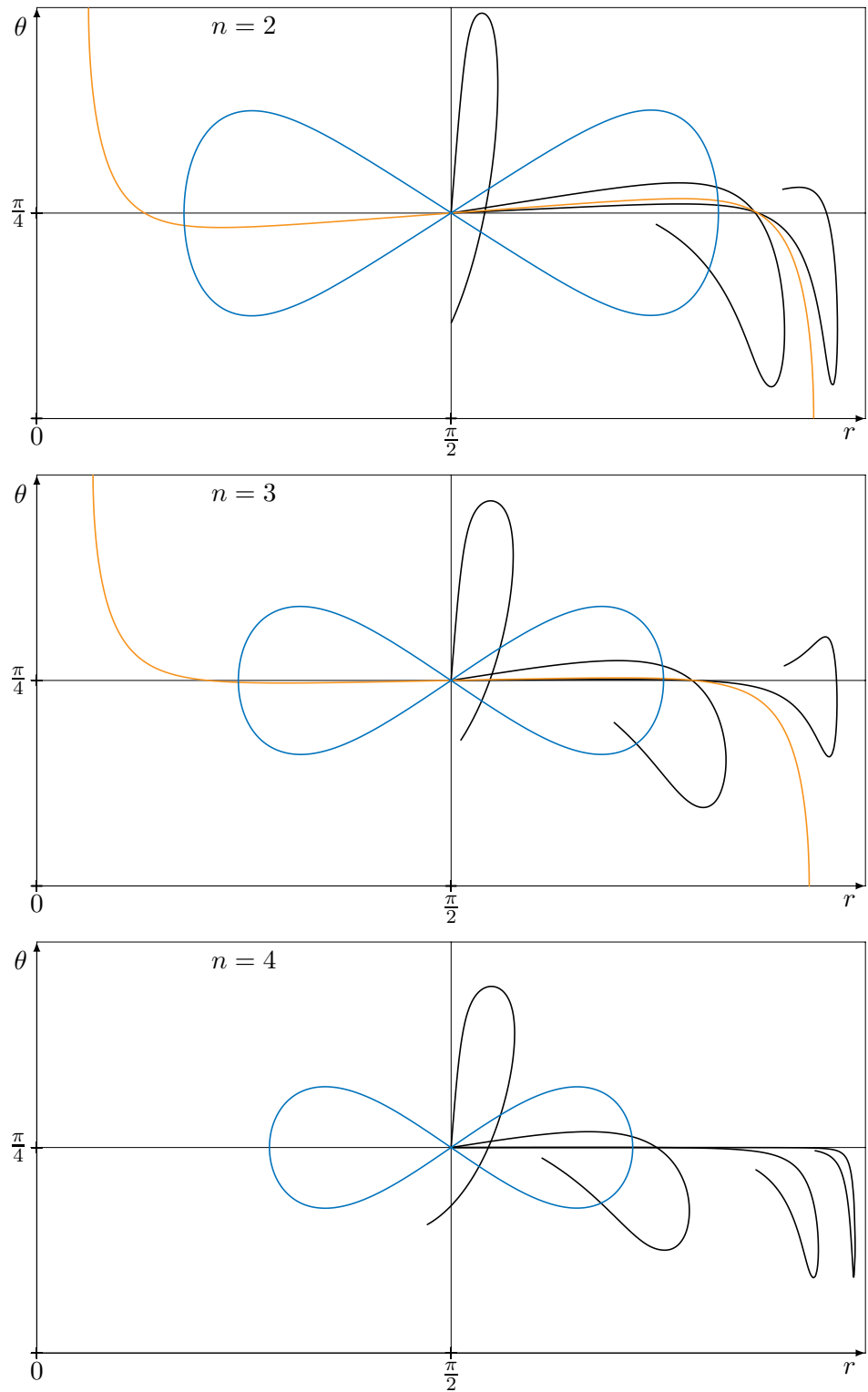
\centering
\pgfmathsetmacro{\scale}{\textheight/6cm}

\caption{Shooting from the centre with varying $\alpha_0\in\interval{0,\pi/2}$ for the cases $n\in\{2,3,4\}$. 
}%
\label{fig:n=2,3,4}%
\end{figure}

\begin{proof}[Proof of Theorems \ref{thm:mainImmersed} and \ref{thm:mainHyperspheres}]
At this stage, both theorems follow at once by simply reflecting the trajectories constructed in Lemma \ref{lem:iteration}. Specifically, the projection to the coordinates $(r,\vartheta)$ of the solutions of the first type, i.\,e.
\[
\bigl(\hat{r}^{(k)},\hat{\vartheta}^{(k)}\bigr)\colon\Interval{0,\hat{s}_*^{(k)}}\to \Interval{\tfrac{\pi}{2},\pi}\times\interval{-\tfrac{\pi}{4},\tfrac{\pi}{4}}
\]
when reflected centrally through the origin give rise to curves that lift to minimally embedded hyperspheres in $\Sp^{2n}$, while the solutions of the second type, i.\,e.
\[
\bigl(\check{r}^{(k)},\check{\vartheta}^{(k)}\bigr)\colon\Interval{0,\check{s}_*^{(k)}}\to \Interval{\tfrac{\pi}{2},\pi}\times\interval{-\tfrac{\pi}{4},\tfrac{\pi}{4}}
\]
when reflected with respect to both axes give rise to closed curves that lift to minimal immersions of $S^{n-1}\times S^{n-1}\times S^1$ in $\Sp^{2n}$. Also, it follows from our construction that any pair of elements of either family are mutually distinct, but we do need to discuss why we do actually get two infinite families of \emph{pairwise non-isometric} hypersurfaces. As far as Theorem \ref{thm:mainImmersed} is concerned, it suffices to note that the intersections of the trajectories we construct
with the $\vartheta$-bisector 
$Z\vcentcolon=\{\vartheta=0\}$ (which corresponds to $\{\theta=\pi/4\}$ in the notation of Sections \ref{sec:setup} and \ref{sec:embedded})
lift to the connected components of the 
self-intersections of the lifted hypersurface (and of course the number of such connected components is invariant under ambient isometries). For Theorem \ref{thm:mainHyperspheres} we need a somewhat less direct argument, instead. Clearly, if either the volume or the Morse index of the hyperspheres we construct were unbounded then we would be done, so let us assume, without loss of generality, that these two quantities are both uniformly bounded. By Sharp's compactness theorem, see \cite{Sha17}, there would be a subsequence of minimal hyperspheres converging smoothly and graphically, with finite multiplicity $m\geq 1$, and possibly away from finitely many points, to a smooth (embedded) minimal hypersurface in $\Sp^{2n}$. Such a hypersurface inherits the $G$-equivariance, and thus it is the lift of a solution curve in the $(r,\vartheta)$ domain. 
However, since each curve $(\hat{r}^{(k)},\hat{\vartheta}^{(k)})$ is actually a graph, and we know that $\hat{\alpha}^{(k)}_0\to 0$ as $k\to\infty$, we conclude that the convergence above actually happens with unit multiplicity (i.\,e. $m=1$), hence smoothly at all points, and the curve in question must be tangent to $Z$ at the origin. 
Hence, by the uniqueness result in the Cauchy--Lipschitz theorem for solutions to ODE, we conclude that the hypersurface in question must be the lift of $Z$ itself, which contradicts the smoothness of the limit hypersurface we had obtained.
\end{proof}

In Figure \ref{fig:n=2,3,4} we display multiple trajectories of solutions to system \eqref{eqn:dr/ds},\eqref{eqn:dt/ds},\eqref{eqn:da/ds} with initial data $(\pi/2,\pi/4,\alpha_0)$. 
Note that here we are working with respect to the original variable $\theta\in\interval{0,\pi/2}$ and do not restrict the trajectories to any specific subdomain. 
In each of the cases $n\in\{2,3,4\}$ we consider different choices of $\alpha_0\in\interval{0,\pi/2}$. 
When $n\in\{2,3\}$, we observe that depending on the choice of $\alpha_0$ the trajectory either turns clockwise ($d\alpha/ds\leq0$) or anticlockwise ($d\alpha/ds\geq0$) at the first interior local minimum of $\theta$. 
In fact, we either have $\alpha=-\pi$ or $\alpha=0$ at these points. 
In between there is a trajectory with $\theta\to0$ as $s\to s_{\ast}$. 
In the case $n=4$, however, we observe that trajectories always turn clockwise at the first interior local minimum of $\theta$ even if $\alpha_0>0$ is chosen extremely small. 
This suggests that no trajectory through the centre may exit at $\theta=0$. 
This phenomenon has been observed for all values of $n\geq4$ we ran the numerical simulations for, and leads to formulate what follows:

\begin{conj}\label{conj:n>3}
For $n\geq4$ any $G$-equivariant, minimal embedding of $S^{2n-1}$ in $\Sp^{2n}$ is equatorial.  
\end{conj}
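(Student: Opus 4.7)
The plan is to argue by contradiction. Suppose $\Sigma$ is a $G$-equivariant, minimally embedded copy of $S^{2n-1}$ in $\Sp^{2n}$ for some $n\geq 4$ that is not the standard equator $\{x_{2n+1}=0\}$. Via the equivariant reduction of Section \ref{sec:setup}, $\Sigma$ descends to a simple curve $\gamma$ in the planar orbit space which solves the ODE system \eqref{eqn:dr/ds2}--\eqref{eqn:da/ds2}, connects a point of $\{\vartheta=-\pi/4\}$ to a point of $\{\vartheta=\pi/4\}$, and meets both side boundaries orthogonally (so that the two $S^{n-1}$ factors collapse smoothly). The equator corresponds to the vertical segment $\{r=\pi/2\}$, so any non-equatorial $\gamma$ necessarily has nontrivial variation in its $r$-component and must cross the Clifford line $\{\vartheta=0\}$ transversally (otherwise $\gamma\subset\{\vartheta=0\}$ and one gets the Clifford torus, not a sphere).

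Using the symmetries $(r,\vartheta,\alpha)\mapsto(\pi-r,\vartheta,\pi-\alpha)$ and $(r,\vartheta,\alpha)\mapsto(r,-\vartheta,-\alpha)$ of the system, one may normalize so that $\gamma$ passes through $(\pi/2,0)$ with tangent angle $\alpha_0\in\intervaL{0,\tfrac{\pi}{2}}$. The case $\alpha_0=0$ gives the Clifford torus by ODE uniqueness (excluded), while $\alpha_0=\pi/2$ yields $\gamma=\{r=\pi/2\}$ (the equator, excluded). The critical input is the sign change of the AM-GM bracket identified in \eqref{eqn:20210629-negative}: the quantity $-4\sqrt{(n-1)\alpha\cot\alpha}+(2n-1)$ is \emph{positive} and bounded below by a constant $c_n>0$ once $n\geq 4$. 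I would leverage this in the direction opposite to Lemma \ref{lem:20210705}: setting $\beta=\alpha/\vartheta$ on the initial arc where $\alpha>0$, I would establish a matching \emph{lower} bound for $d\beta/ds$ refining \eqref{eqn:20210629-tdv/ds}, using \eqref{eqn:20210612-1} to replace the AM-GM step by an explicit inequality carrying the exact dependence on $n$. This should yield a quantitative estimate of the form $\alpha(s)\geq c_n\vartheta(s)$ along the entire arc until $\alpha$ first vanishes, showing that the first local maximum of $\vartheta$ is attained strictly inside the open quotient with $\vartheta<\pi/4$.

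Finally, I would iterate: combining the above with Lemma \ref{lem:zeros} and Remark \ref{rem:MonotonTheta} applied on each half-period between successive zeros of $\vartheta$, one should show that the successive extremal angles $|\alpha|$ at crossings of $\{\vartheta=0\}$ remain uniformly bounded away from $\pi/2$, and the corresponding $r$-values remain bounded away from $\pi$. This rules out the orthogonal exit at $\{\vartheta=\pm\pi/4\}$ required for $\gamma$ to lift to an embedded sphere, forcing $\gamma$ to be confined to the interior forever --- a contradiction with the $S^{2n-1}$ topology. The main obstacle is the rigidity of the lower bound: inequality \eqref{eqn:20210629-tdv/ds} is only one-sided, so obtaining a genuine two-sided control may require working in the variables $(\tan\alpha,\tan 2\vartheta)$ and comparing the trajectory against a model first-order ODE that is explicitly integrable, in the spirit of the Bombieri--De Giorgi--Giusti blow-up limit used by Hsiang; handling the singular locus $\{r=\pi\}$ and excluding exotic embedded configurations which exit through it constitutes a secondary topological issue to be dealt with separately.
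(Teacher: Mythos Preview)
The statement you are attempting to prove is labelled \textbf{Conjecture} in the paper, not Theorem: the authors explicitly leave it open and motivate it only by numerical evidence (Figure~\ref{fig:n=2,3,4} and the surrounding discussion). There is therefore no proof in the paper to compare your proposal against; what you have written is an attempted resolution of an open problem.

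As for the substance of your plan, there are two genuine gaps. First, the normalization step is unjustified: the symmetries you invoke tell you that the reflection of a solution is again a solution, but they do \emph{not} imply that an arbitrary $G$-equivariant embedded hypersphere descends to a curve that is itself symmetric, nor that it passes through the specific point $(\pi/2,0)$. A non-equatorial solution curve must cross $\{\vartheta=0\}$ somewhere, but the crossing may occur at any $r_0\in\interval{0,\pi}$, and for $r_0<\pi/2$ the sign of $\cot(r)$ in \eqref{eqn:da/ds2} reverses, which undermines the monotonicity you later need. Second, and more seriously, you yourself identify the obstruction: the estimate \eqref{eqn:20210629-tdv/ds} is one-sided because both the bound $\tan(2\vartheta)\geq 2\vartheta$ and the AM--GM step go the wrong way for a lower bound on $d\beta/ds$. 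Your proposed remedy---comparing against an integrable model ODE in the spirit of the Bombieri--De~Giorgi--Giusti blow-up---is precisely the hard analytical step that would constitute a proof, and it is not carried out. Without it, the subsequent iteration via Lemma~\ref{lem:zeros} and Remark~\ref{rem:MonotonTheta} cannot get started: those results control the \emph{growth} of $|\alpha|$ at successive zeros of $\vartheta$, which is the opposite of the uniform-boundedness-away-from-$\pi/2$ that you need.
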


\begin{remark} 
Conjecture \ref{conj:n>3} (where $G=\Ogroup(n)\times\Ogroup(n)$ consistently with the notation we have employed throughout the paper) should be compared with Hsiang's \cite{Hsiang1983} Conjecture 1 stating that for each $m\geq7$ there exist infinitely many distinct  minimal embeddings of $S^{m-1}$ into $\Sp^{m}$, and later settled in \cite{HsiSte86} (see Theorem 4 therein) in the category of \emph{immersed} minimal hypersurfaces.
\end{remark}

The existence of the $\infty$-figure however does not seem to be obstructed for any values of $n\geq4$ we ran numerical simulations for. 
One could attempt a proof using (slightly or entirely) different arguments, keeping in mind that our construction of the $\infty$-figure for $n\in\{2,3\}$ relies on Lemma \ref{lem:20210705} which breaks down for $n\geq4$ (for indeed the crucial inequality \eqref{eqn:20210629-negative} ceases to hold true in that case, no matter how small the shooting angle $\alpha_0$). 
A careful analysis of this phenomenon falls somewhat beyond the scope of the present article since -- given the geometric problem we started with -- we are interested in the case $n=2$, where the $\infty$-figure lifts to a minimal immersion of a hypertorus $T^3$ in $\Sp^4$.

\appendix 

\section{A minimal embedding of \texorpdfstring{$T^4$}{a hypertorus} in \texorpdfstring{$\Sp^5$}{the five-dimensional sphere}}\label{sec:HsiangClaim}

In this appendix, we describe how the analysis presented in Section \ref{sec:embedded} allows, with simple modifications, to also prove the following statement:

\begin{theorem}\label{thm:HsiangClaim}
There exists a minimal embedding of $S^{1}\times S^{1}\times S^1\times S^1$ in the round sphere $\Sp^{5}$.  
\end{theorem}

To that aim, one considers the action of $H=\Ogroup(n)\times\Ogroup(n)\times\Ogroup(n)$ on $\Sp^{3n-1}\subset \R^{3n}=\R^n\times \R^n\times \R^n$ given by the outer direct sum of the standard representations of the orthogonal group. The quotient $\Sp^{3n-1}/H$, endowed with the orbital metric, is (isometric to) the first octant of the unit round sphere, i.\,e. $\bigl\{(X,Y,Z)\in\R^3\st 0\leq X,Y,Z\leq 1,~ X^2+Y^2+Z^2=1\bigr\}$ and we note, following \cite{HsiHsi80}, that the geodesic arcs emanating from the vertices to the midpoints of the opposite sides divide such a domain into six subdomains, so that the problem amounts to constructing, in any one of those, an arc solving the appropriate geodesic curvature prescribed equation and meeting the boundary orthogonally (see Figure \ref{fig:shooting3}, right image). 
That being said, diverting from that source, we prefer to study the problem in polar coordinates and construct such a free boundary arc relying on arguments similar to those in Section \ref{sec:embedded} above. 

Thus, we work in coordinates $(r,\theta)\in [0,\pi/2]\times [0,\pi/2]$ and with respect to the spherical metric $g=dr^2+\sin^2(r)d\theta^2$; one needs to prove the existence of certain geodesic arcs in the conformal metric $W^2g$ where $W(r,\theta)=\omega^3_{n-1}\abs{\sin(r)\cos(\theta)}^{n-1}\abs{\sin(r)\sin(\theta)}^{n-1}\abs{\cos(r)}^{n-1}$ (if we take, from now onwards, $n=2$ then simply $W(r,\theta)=4\pi^3\sin^2(r)\cos(r)\sin(2\theta)$; this suffices for our purposes). 
Thus, proceeding exactly as in Section \ref{sec:setup}, we need to find a closed smooth orbit for the $3\times 3$ system given by \eqref{eqn:dr/ds}, \eqref{eqn:dt/ds} and 
\begin{align}\label{eqn:da/ds3}
\frac{d\alpha}{ds}&=\frac{2\cot(2\theta)}{\sin(r)}\cos(\alpha)
-\frac{2+4\cos(2r)}{\sin(2r)}\sin(\alpha).
\end{align}
By virtue of the aforementioned symmetry arguments we work in the `fundamental domain' (see Figure \ref{fig:shooting3} left image) defined in these coordinates by letting 
\begin{align}\label{eqn:domainB'}
B'\vcentcolon=\left\{(r,\theta)\in\intervaL{0,\pi/2}\times\intervaL{0,\pi/4}\st \tan(r)\cos(\theta)\leq 1\right\}
\end{align}
and will further consider $B\vcentcolon=B'\times \IntervaL{-\pi/2,0}$. We will consider solutions of the system above with initial data of the form $(r_0,\pi/4,-\pi/2)$, exactly as in Section \ref{sec:embedded}. For the sake of convenience, in the setting above let us define $u(r,\theta)=\tan(r)\cos(\theta)$ and, furthermore
\[
v(r,\theta)=-\arctan\bigl(\cos(r)\tan(\theta)\bigr),
\]
that is precisely the value of $\alpha$ corresponding to an outward-pointing vector normal, in metric $g$, to the regular curve $\left\{u(r,\theta)=1\right\}$ at the point $(r,\theta)\in\partial B'$.

We note that, set $\varrho\vcentcolon=\arctan(\sqrt{2})<\pi/3$, 
as long as $(r,\theta,\alpha)\in\intervaL{0,\varrho}\times\intervaL{0,\pi/4}\times\IntervaL{-\pi/2,0}$ equations \eqref{eqn:dr/ds}, \eqref{eqn:dt/ds} and \eqref{eqn:da/ds3} imply that $r$ and $\alpha$ are weakly increasing and $\theta$ is weakly decreasing, and there are no stationary points for the motion. One can then easily prove a well-posedness lemma analogous to Lemma \ref{lem:well-posed}, where (in the final clause) one shall read that \emph{either} $\tan(r(s_*))\cos(\theta(s_*))=1$ \emph{or} $\alpha(s_*)=0$; in either case $\theta(s_*)>0$. 
Indeed, equations \eqref{eqn:da/ds} and \eqref{eqn:da/ds3} for $\alpha$ have the same first summand up to a constant factor and their second summand (restricted to the respective domain) is nonnegative in both cases. Therefore, we may argue exactly as in the proof of Lemma~\ref{lem:well-posed}, where we simply drop the second summand.  
The following estimates are analogous to Lemmata \ref{lem:dipping_down} and \ref{lem:roof}. 

\begin{lemma}\label{lem:dipping_down-appendix}
Given $r_0\in\interval{0,\pi/8}$ let $(r,\theta,\alpha)\colon[0,s_*]\to B$ be the solution to system \eqref{eqn:dr/ds}, \eqref{eqn:dt/ds}, \eqref{eqn:da/ds3} and let $s_1\in\intervaL{0,s_*}$ be arbitrary. 
If $r(s_1)\geq2r_0$ then $\theta(s_1)<\pi/4-1/12$. 
\end{lemma}

\begin{proof}
Let $\delta=1/12$ be fixed and assume there exists $s_1\in\intervaL{0,s_*}$ with $r(s_1)\geq 2r_0$. 
Towards a contradiction, suppose $\theta(s_1)\geq\pi/4-\delta$. 
By monotonicity of $\theta$, we then have $\theta(s)\geq\pi/4-\delta$ for all $s\in[0,s_1]$. Hence, $0\leq\cot(2\theta)\leq\tan(2\delta)=\vcentcolon b$  
for all $s\in[0,s_1]$. 
Equations \eqref{eqn:dt/ds} and \eqref{eqn:da/ds} then imply  
\begin{align}\label{eqn:da/dt3}
\frac{d\alpha}{ds}
&=\Bigl(2\cot(2\theta)\cot(\alpha)
-\frac{2+4\cos(2r)}{\sin(2r)}\sin(r)
\Bigr)\frac{d\theta}{ds}
\leq3\Bigl(b\cot(\alpha)-1\Bigr)\frac{d\theta}{ds}.
\end{align}
Estimate \eqref{eqn:da/dt3} is identical to \eqref{eqn:da/dt} for $n=2$ in the proof of Lemma \ref{lem:dipping_down}. 
Analogously to \eqref{eqn:alpha(s1)}, 
\begin{align}\label{eqn:20210920}
\alpha(s_1)&\leq-b\log(b)+(b^2+1)3\delta-\frac{\pi}{2}<-\frac{\pi}{4}.
\end{align}
We then obtain the contradiction $\theta(s_1)-\pi/4<-\delta$ with the same argument as for \eqref{eqn:dt/dr}--\eqref{eqn:20210918-ts1}. 
\end{proof}

\begin{lemma}[see Figure \ref{fig:shooting3}\;(1)]\label{lem:roof-appendix}
There exists a constant $c>0$ such that if the initial data $r_0\in\interval{0,\pi/8}$ is sufficiently small, then the solution to system \eqref{eqn:dr/ds},\eqref{eqn:dt/ds},\eqref{eqn:da/ds3}  satisfies $\alpha(s_*)=0$ and $r(s_*)\leq c r_0$. 
\end{lemma}

\begin{proof}
We may argue exactly as in the proof of Lemma \ref{lem:roof} for $n=2$. 
By Lemma \ref{lem:dipping_down-appendix}, we obtain $d\alpha/ds\geq(2/r)\tan(1/6)dr/ds$ for all $s\in[s_1,s_*]$ as in \eqref{eqn:da/ds-estimate}. 
The claim then follows by integration. 
\end{proof}

\begin{figure}
\centering
\begin{tikzpicture}[line cap=round,line join=round,baseline={(0,pi/4*\textwidth/3.5)},scale=\textwidth/3.5cm] 
\draw[-latex](0,0)--(pi/2,0)node[right]{$r$};
\draw plot[plus]({rad(atan(sqrt(2)))},0)node[below]{$\varrho\mathrlap{{}\vcentcolon=\arctan\sqrt{2}}$};
\pgfresetboundingbox
\draw[thin](0,0)grid [xstep=pi/2,ystep=pi/4](pi/2,pi/2);
\draw[-latex](0,0)--(0,pi/2)node[below left]{$\theta$};
\draw plot[plus](0,0)node[below]{$0$};
\draw plot[plus](pi/4,0)node[below]{$\frac{\pi}{4}$};
\draw plot[plus](0,pi/4)node[left]{$\frac{\pi}{4}$};
\draw[very thin,dashed]({rad(atan(sqrt(2)))},0)--++(0,pi/2);
\draw[domain=0:pi/2,variable=\t,thin]plot({rad(atan(1/cos(deg(\t))))},{pi/2-\t}); 
\draw[domain=0:pi/2,variable=\t,thin]plot({rad(atan(1/cos(deg(\t))))},{\t});
\draw[domain=0:pi/4,variable=\t,color={cmyk,1:cyan,1;yellow,0.5},thick,fill=black!5]plot({rad(atan(1/cos(deg(\t))))},{\t})-|(0,0)--cycle;
%
\draw[smooth,torustrajectory,dotted]plot coordinates {(0.8138,0.3513) (0.8244,0.3478) (0.8351,0.3449) (0.8459,0.3425) (0.8568,0.3407) (0.8677,0.3395) (0.8786,0.3388) (0.8896,0.3386) (0.9006,0.3389) (0.9116,0.3397) (0.9225,0.3410) (0.9334,0.3426) (0.9442,0.3447) (0.9549,0.3472) (0.9656,0.3500) (0.9761,0.3532) (0.9866,0.3567) (0.9971,0.3606) (1.0074,0.3647) (1.0176,0.3691) (1.0277,0.3738) (1.0378,0.3788) (1.0477,0.3840) (1.0575,0.3895) (1.0673,0.3952) (1.0769,0.4011) (1.0863,0.4073) (1.0957,0.4136) (1.1049,0.4201) (1.1141,0.4268) (1.1230,0.4337) (1.1319,0.4408) (1.1406,0.4480) (1.1492,0.4554) (1.1576,0.4630) (1.1659,0.4707) (1.1741,0.4786) (1.1821,0.4866) (1.1899,0.4948) (1.1976,0.5031) (1.2051,0.5116) (1.2124,0.5202) (1.2196,0.5289) (1.2266,0.5379) (1.2334,0.5469) (1.2400,0.5561) (1.2464,0.5654) (1.2525,0.5749) (1.2585,0.5845) (1.2642,0.5943) (1.2697,0.6041) (1.2750,0.6141) (1.2800,0.6243) (1.2847,0.6345) (1.2892,0.6449) (1.2933,0.6554) (1.2972,0.6660) (1.3008,0.6767) (1.3040,0.6875) (1.3070,0.6984) (1.3096,0.7093) (1.3118,0.7204) (1.3137,0.7315) (1.3153,0.7426) (1.3165,0.7538) (1.3173,0.7650) (1.3178,0.7763) (1.3179,0.7875) (1.3176,0.7988) (1.3170,0.8101) (1.3160,0.8213) (1.3146,0.8325) (1.3129,0.8436) (1.3108,0.8547) (1.3084,0.8657) (1.3057,0.8766) (1.3026,0.8874) (1.2992,0.8982) (1.2955,0.9088) (1.2916,0.9193) (1.2873,0.9298) (1.2827,0.9401) (1.2778,0.9503) (1.2727,0.9604) (1.2673,0.9704) (1.2617,0.9802) (1.2559,0.9899) (1.2498,0.9995) (1.2434,1.0090) (1.2369,1.0183) (1.2302,1.0274) (1.2233,1.0364) (1.2162,1.0453) (1.2090,1.0540) (1.2015,1.0625) (1.1940,1.0709) (1.1862,1.0792) (1.1783,1.0873) (1.1702,1.0953) (1.1620,1.1031) (1.1537,1.1107) (1.1452,1.1182) (1.1365,1.1255) (1.1278,1.1327) (1.1189,1.1397) (1.1098,1.1465) (1.1006,1.1531) (1.0914,1.1595) (1.0820,1.1658) (1.0724,1.1718) (1.0628,1.1776) (1.0530,1.1832) (1.0432,1.1886) (1.0332,1.1937) (1.0231,1.1986) (1.0129,1.2032) (1.0027,1.2075) (0.9923,1.2115) (0.9818,1.2153) (0.9713,1.2186) (0.9607,1.2217) (0.9500,1.2244) (0.9392,1.2267) (0.9284,1.2286) (0.9176,1.2300) (0.9067,1.2311) (0.8958,1.2317) (0.8849,1.2318) (0.8741,1.2314) (0.8632,1.2305) (0.8523,1.2291) (0.8415,1.2271) (0.8308,1.2246) (0.8201,1.2215) (0.8096,1.2178) (0.7991,1.2135) (0.7888,1.2085) (0.7786,1.2030) (0.7686,1.1968) (0.7588,1.1899) (0.7492,1.1824) (0.7398,1.1742) (0.7306,1.1654) (0.7217,1.1559) (0.7130,1.1458) (0.7046,1.1350) (0.6965,1.1236) (0.6887,1.1116) (0.6812,1.0990) (0.6741,1.0858) (0.6672,1.0720) (0.6608,1.0577) (0.6547,1.0429) (0.6489,1.0276) (0.6436,1.0118) (0.6386,0.9956) (0.6340,0.9789) (0.6298,0.9619) (0.6260,0.9445) (0.6226,0.9268) (0.6196,0.9087) (0.6171,0.8905) (0.6149,0.8720) (0.6132,0.8533) (0.6119,0.8344) (0.6110,0.8155) (0.6106,0.7965) 
};
\draw[smooth,torustrajectory,thick]plot coordinates {(0.6107,0.7854) (0.6109,0.7664) (0.6116,0.7474) (0.6126,0.7285) (0.6141,0.7098) (0.6160,0.6912) (0.6184,0.6728) (0.6211,0.6546) (0.6243,0.6367) (0.6278,0.6191) (0.6318,0.6018) (0.6361,0.5849) (0.6408,0.5685) (0.6460,0.5524) (0.6515,0.5368) (0.6574,0.5217) (0.6636,0.5072) (0.6702,0.4931) (0.6772,0.4796) (0.6844,0.4667) (0.6921,0.4544) (0.7000,0.4426) (0.7082,0.4315) (0.7167,0.4210) (0.7255,0.4112) (0.7345,0.4020) (0.7437,0.3934) (0.7532,0.3855) (0.7629,0.3782) (0.7727,0.3715) (0.7828,0.3656) (0.7930,0.3602) (0.8033,0.3555) (0.8138,0.3513)  
};
%
\draw[smooth,dotted]plot coordinates {(0.2389,0.4289) (0.2470,0.4310) (0.2551,0.4346) (0.2632,0.4394) (0.2712,0.4452) (0.2792,0.4518) (0.2871,0.4589) (0.2950,0.4664) (0.3028,0.4742) (0.3106,0.4823) (0.3183,0.4905) (0.3261,0.4987) (0.3338,0.5070) (0.3415,0.5152) (0.3492,0.5234) (0.3568,0.5315) (0.3645,0.5395) (0.3721,0.5474) (0.3798,0.5551) (0.3874,0.5627) (0.3951,0.5702) (0.4027,0.5775) (0.4104,0.5846) (0.4181,0.5916) (0.4257,0.5985) (0.4334,0.6051) (0.4411,0.6116) (0.4488,0.6179) (0.4565,0.6241) (0.4642,0.6301) (0.4719,0.6360) (0.4797,0.6417) (0.4874,0.6473) (0.4952,0.6527) (0.5029,0.6580) (0.5107,0.6632) 
};
\draw[smooth,trajectory]plot coordinates {(0.1571,0.7854) (0.1577,0.7335) (0.1596,0.6832) (0.1626,0.6359) (0.1667,0.5928) (0.1717,0.5547) (0.1776,0.5221) (0.1841,0.4950) (0.1912,0.4732) (0.1987,0.4565) (0.2065,0.4442) (0.2145,0.4359) (0.2225,0.4309) (0.2307,0.4287) (0.2389,0.4289)};
%
\draw[smooth,dotted]plot coordinates { (0.9089,0.6819) (0.9110,0.6561) (0.9135,0.6305) (0.9164,0.6050) (0.9198,0.5797) (0.9237,0.5546) (0.9281,0.5296) (0.9330,0.5049) (0.9383,0.4804) (0.9442,0.4563) (0.9506,0.4324) (0.9575,0.4089) (0.9651,0.3857) (0.9732,0.3630) (0.9819,0.3408) (0.9913,0.3191) (1.0014,0.2979) (1.0123,0.2775) (1.0239,0.2578) (1.0365,0.2390) (1.0499,0.2213) (1.0644,0.2048) (1.0799,0.1897) (1.0966,0.1764) (1.1143,0.1651) (1.1331,0.1563) (1.1527,0.1502) (1.1729,0.1471) (1.1933,0.1472) (1.2135,0.1504) (1.2330,0.1567) (1.2515,0.1658) (1.2688,0.1772) (1.2846,0.1906) (1.2989,0.2058) (1.3116,0.2224) (1.3225,0.2402) (1.3318,0.2589) (1.3393,0.2784) (1.3451,0.2985) (1.3492,0.3190) (1.3515,0.3397) (1.3522,0.3606) (1.3513,0.3815) (1.3489,0.4023) (1.3451,0.4228) 
};
\draw[smooth,trajectory]plot coordinates {(0.9053,0.7854) (0.9055,0.7595) (0.9062,0.7335) (0.9073,0.7077) (0.9089,0.6819)  
};
\draw(0.2389,0.4289)node[below]{(1)};
\draw(0.9089,0.6819)node[left]{(2)};
\draw[torustrajectory](0.73,0.4289)node[below left]{(3)};
\draw(0.66*pi/4,0)node[above=2ex]{$B'$};
\end{tikzpicture}
\hfill
\tdplotsetmaincoords{60}{45}
\begin{tikzpicture}[baseline={(0,0)},scale=\textwidth/4.33cm,line cap=round,tdplot_main_coords] 
\path[tdplot_screen_coords,ball color=white] (0,0) circle (1);
\foreach\y in {0,9,...,90}{
\tdplotsetrotatedcoords{0}{-\y}{0}
\tdplotdrawarc[tdplot_rotated_coords,very thin,black!57]{(0,0,0)}{1}{-90}{0}{}{}
}
\tdplotsetthetaplanecoords{0}
\foreach\x in {1,...,9}{ 
\tdplotdrawarc[tdplot_rotated_coords ,very thin,black!57]
{(0,0,{sin(-9*\x)})} {{cos(-9*\x)}}{0}{90}{}{}
}
\tdplotdrawarc[tdplot_rotated_coords,-stealth,semithick]{(0,0,0)}{1}{90}{0}{right}{$\theta$}
\tdplotsetrotatedcoords{0}{-0}{0}
\tdplotdrawarc[tdplot_rotated_coords,very thin,dashed]{(0,0,0)}{1}{0}{360}{}{}
\tdplotsetrotatedcoords{0}{-90}{0}
\tdplotdrawarc[tdplot_rotated_coords]{(0,0,0)}{1}{-90}{0}{}{}
\tdplotsetrotatedcoords{0}{-0}{0}
\tdplotdrawarc[tdplot_rotated_coords,-stealth,semithick]{(0,0,0)}{1}{-90}{0}{below}{$r$}
%
\begin{scope}[ ]
\tdplotsetrotatedcoords{0}{-45}{0}
\tdplotdrawarc[tdplot_rotated_coords]{(0,0,0)}{1}{-90}{0}{}{}
\tdplotsetthetaplanecoords{-45}
\tdplotdrawarc[tdplot_rotated_coords]{(0,0,0)}{1}{90}{0}{}{}
\tdplotsetrotatedcoords{0}{-45}{0}
\tdplotdrawarc[tdplot_rotated_coords]{(0,0,0)}{1}{-90}{0}{}{}
\tdplotsetrotatedcoords{90}{45}{0}
\tdplotdrawarc[tdplot_rotated_coords]{(0,0,0)}{1}{180}{270}{}{}
\end{scope}
%
\begin{scope}[color={cmyk,1:cyan,1;yellow,0.5},thick]
\tdplotsetrotatedcoords{0}{-0}{0}
\tdplotdrawarc[tdplot_rotated_coords]
{(0,0,0)}{1}{-90}{-45}{}{}
\tdplotsetthetaplanecoords{-45}
\tdplotdrawarc[tdplot_rotated_coords]{(0,0,0)}{1}{90}{55}{}{}
\tdplotsetrotatedcoords{0}{-45}{0}
\tdplotdrawarc[tdplot_rotated_coords]{(0,0,0)}{1}{-90}{-35.33}{}{}
\end{scope}
%
\begin{scope}[dotted]
\draw(0.1106,-0.9877,0.1106)--(0.1183,-0.9875,0.1039)--(0.1267,-0.9871,0.0981)--(0.1357,-0.9863,0.0935)--(0.1452,-0.9853,0.0900)--(0.1551,-0.9840,0.0877)--(0.1652,-0.9825,0.0867)--(0.1752,-0.9807,0.0868)--(0.1851,-0.9788,0.0881)--(0.1949,-0.9767,0.0904)--(0.2043,-0.9744,0.0935)--(0.2134,-0.9721,0.0975)--(0.2222,-0.9696,0.1021)--(0.2306,-0.9671,0.1073)--(0.2387,-0.9645,0.1130)--(0.2464,-0.9618,0.1190)--(0.2539,-0.9591,0.1254)--(0.2611,-0.9562,0.1321)--(0.2681,-0.9533,0.1389)--(0.2748,-0.9504,0.1460)--(0.2813,-0.9473,0.1532)--(0.2877,-0.9442,0.1605)--(0.2939,-0.9410,0.1680)--(0.2999,-0.9377,0.1755)--(0.3059,-0.9343,0.1831)--(0.3117,-0.9308,0.1908)--(0.3173,-0.9273,0.1985)--(0.3229,-0.9237,0.2062)--(0.3284,-0.9200,0.2140)--(0.3338,-0.9162,0.2218)--(0.3391,-0.9123,0.2295)--(0.3444,-0.9083,0.2373)--(0.3496,-0.9043,0.2451)--(0.3547,-0.9001,0.2529)--(0.3597,-0.8959,0.2607)--(0.3648,-0.8916,0.2685)--(0.3697,-0.8871,0.2762)--(0.3746,-0.8826,0.2840)--(0.3794,-0.8780,0.2917)--(0.3842,-0.8733,0.2994)--(0.3890,-0.8686,0.3070)--(0.3937,-0.8637,0.3147)--(0.3984,-0.8587,0.3223)--(0.4030,-0.8537,0.3298)--(0.4076,-0.8486,0.3373) ;
\draw(0.5562,-0.6174,0.5562)--(0.5706,-0.6173,0.5417)--(0.5847,-0.6167,0.5270)--(0.5987,-0.6158,0.5122)--(0.6125,-0.6146,0.4972)--(0.6260,-0.6130,0.4820)--(0.6394,-0.6110,0.4667)--(0.6526,-0.6087,0.4513)--(0.6655,-0.6060,0.4358)--(0.6783,-0.6029,0.4201)--(0.6908,-0.5994,0.4044)--(0.7031,-0.5955,0.3886)--(0.7153,-0.5911,0.3728)--(0.7272,-0.5864,0.3569)--(0.7389,-0.5812,0.3410)--(0.7504,-0.5755,0.3251)--(0.7617,-0.5694,0.3093)--(0.7728,-0.5627,0.2936)--(0.7837,-0.5554,0.2779)--(0.7945,-0.5476,0.2625)--(0.8051,-0.5391,0.2472)--(0.8156,-0.5299,0.2323)--(0.8259,-0.5200,0.2178)--(0.8361,-0.5093,0.2038)--(0.8462,-0.4976,0.1904)--(0.8562,-0.4850,0.1778)--(0.8661,-0.4714,0.1663)--(0.8758,-0.4566,0.1561)--(0.8854,-0.4408,0.1476)--(0.8947,-0.4239,0.1410)--(0.9036,-0.4060,0.1367)--(0.9119,-0.3875,0.1351)--(0.9195,-0.3686,0.1363)--(0.9262,-0.3498,0.1404)--(0.9319,-0.3314,0.1473)--(0.9364,-0.3139,0.1567)--(0.9398,-0.2974,0.1683)--(0.9420,-0.2823,0.1818)--(0.9429,-0.2686,0.1968)--(0.9428,-0.2563,0.2132)--(0.9415,-0.2457,0.2306)--(0.9392,-0.2367,0.2487)--(0.9359,-0.2294,0.2675)--(0.9316,-0.2238,0.2866)--(0.9263,-0.2198,0.3059)--(0.9203,-0.2175,0.3253)--(0.9134,-0.2168,0.3445)--(0.9058,-0.2177,0.3634)--(0.8976,-0.2200,0.3819)--(0.8888,-0.2237,0.3999);
\end{scope}
%
\begin{scope} 
\draw(0.1106,-0.9877,0.1106)--(0.1183,-0.9875,0.1039)--(0.1267,-0.9871,0.0981)--(0.1357,-0.9863,0.0935)--(0.1452,-0.9853,0.0900)--(0.1551,-0.9840,0.0877)--(0.1652,-0.9825,0.0867)--(0.1752,-0.9807,0.0868)--(0.1851,-0.9788,0.0881)--(0.1949,-0.9767,0.0904)--(0.2043,-0.9744,0.0935)--(0.2134,-0.9721,0.0975);
\draw(0.5562,-0.6174,0.5562)--(0.5706,-0.6173,0.5417)--(0.5847,-0.6167,0.5270)--(0.5987,-0.6158,0.5122)--(0.6125,-0.6146,0.4972) ;
\end{scope}
%
\begin{scope}[torustrajectory,thick]
\draw(0.4055,-0.8192,0.4055)--(0.4282,-0.8182,0.3837)--(0.4516,-0.8151,0.3628)--(0.4756,-0.8100,0.3430)--(0.5001,-0.8028,0.3246)--(0.5250,-0.7935,0.3076)--(0.5502,-0.7822,0.2924)--(0.5754,-0.7688,0.2791)--(0.6005,-0.7533,0.2681)--(0.6254,-0.7359,0.2595)--(0.6497,-0.7167,0.2536)--(0.6731,-0.6958,0.2506)--(0.6955,-0.6734,0.2505)--(0.7164,-0.6500,0.2535)--(0.7357,-0.6258,0.2592)--(0.7531,-0.6010,0.2677)--(0.7686,-0.5759,0.2786)--(0.7821,-0.5505,0.2918)--(0.7936,-0.5253,0.3071)--(0.8029,-0.5002,0.3241)--(0.8102,-0.4757,0.3426)--(0.8153,-0.4516,0.3623)--(0.8184,-0.4282,0.3832)--(0.8195,-0.4055,0.4050)--(0.8185,-0.3836,0.4277)--(0.8154,-0.3628,0.4511)--(0.8103,-0.3430,0.4751)--(0.8032,-0.3245,0.4996)--(0.7939,-0.3075,0.5245)--(0.7826,-0.2923,0.5497)--(0.7691,-0.2791,0.5749)--(0.7537,-0.2680,0.6001)--(0.7363,-0.2595,0.6249)--(0.7171,-0.2536,0.6492)--(0.6963,-0.2506,0.6726)--(0.6740,-0.2505,0.6949)--(0.6507,-0.2534,0.7158)--(0.6264,-0.2592,0.7352)--(0.6015,-0.2677,0.7527)--(0.5763,-0.2788,0.7682)--(0.5510,-0.2920,0.7817)--(0.5259,-0.3072,0.7931)--(0.5010,-0.3241,0.8025)--(0.4765,-0.3425,0.8097)--(0.4524,-0.3622,0.8149)--(0.4290,-0.3831,0.8180)--(0.4062,-0.4050,0.8191)--(0.3844,-0.4276,0.8182)--(0.3635,-0.4510,0.8152)--(0.3437,-0.4750,0.8101)--(0.3252,-0.4995,0.8029)--(0.3082,-0.5244,0.7937)--(0.2929,-0.5496,0.7824)--(0.2796,-0.5748,0.7691)--(0.2685,-0.5999,0.7537)--(0.2598,-0.6248,0.7363)--(0.2538,-0.6491,0.7171)--(0.2507,-0.6725,0.6963)--(0.2506,-0.6948,0.6741)--(0.2534,-0.7158,0.6507)--(0.2591,-0.7351,0.6265)--(0.2674,-0.7526,0.6017)--(0.2782,-0.7682,0.5766)--(0.2913,-0.7818,0.5513)--(0.3064,-0.7933,0.5261)--(0.3234,-0.8027,0.5011)--(0.3418,-0.8100,0.4764)--(0.3616,-0.8153,0.4523)--(0.3824,-0.8184,0.4289)--(0.4042,-0.8195,0.4062)--cycle;
\end{scope}
\pgfresetboundingbox\useasboundingbox[tdplot_screen_coords] (0,0) circle (1);
\end{tikzpicture}
\caption{Left image: Implementing the shooting method. 
Trajectory (1) exists $B$ through the ``roof'' $\{\alpha=0\}$. 
Trajectory (2) exits $B$ through the ``side'' $\{u(r,\theta)=1\}$.  
Trajectory (3) is the desired curve. 
Right image: Visualisation of the reflection symmetries in the quotient $\Sp^{3n-1}/H$. }
\label{fig:shooting3}
\end{figure}
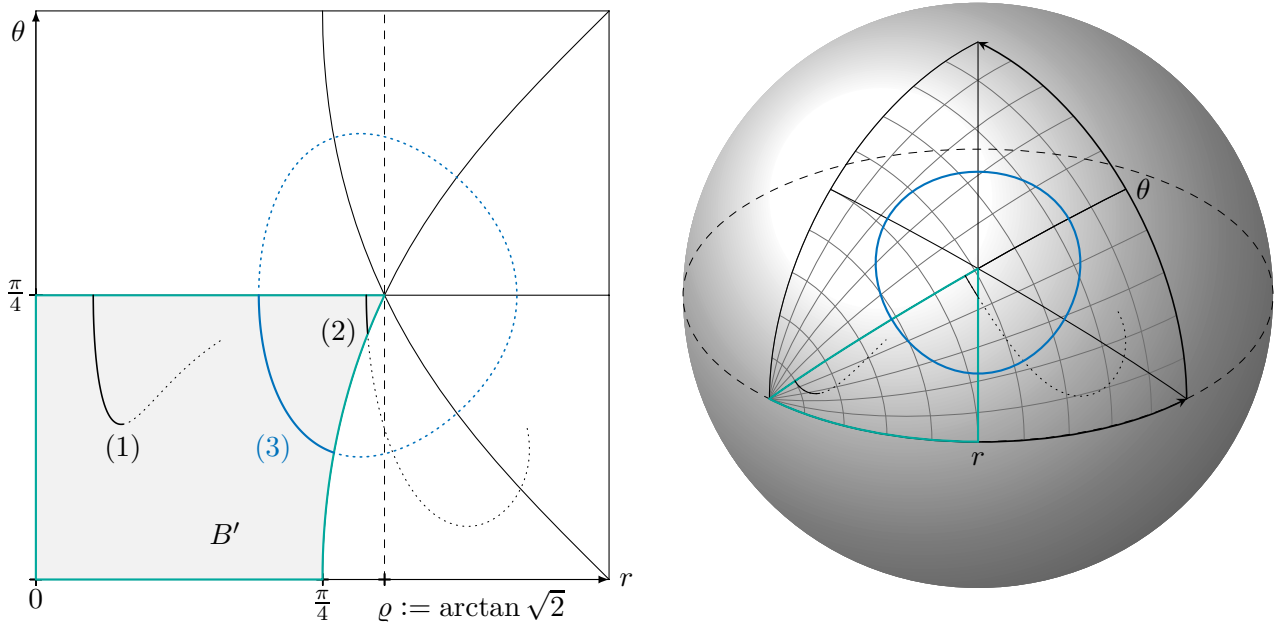

\begin{lemma}[see Figure \ref{fig:shooting3}\;(2)]\label{lem:side-appendix}
For every $\varepsilon>0$ there exists $c>0$ such that if $r_0\in\interval{\varrho-c,\varrho}$ then $\alpha(s)<\varepsilon-\pi/2$ for all $s\in[0,s_*]$ and $\tan(r(s_*))\cos(\theta(s_*))=1$. 
\end{lemma}
 
\begin{proof}
The monotonicity of $r$ and $\theta$ and the definition \eqref{eqn:domainB'} 
imply that for every $\delta>0$ there exists $c>0$ such that if $r_0\geq\varrho-c$ then, $\theta\geq\pi/4-\delta$ for all $s\in\IntervaL{0,s_*}$. 
In fact, solving the equation $\tan(r)\cos(\theta)=1$ for $r=\arctan\bigl(1/\cos(\theta)\bigr)$ we obtain a smooth, strictly increasing function of $\theta$  which attains the value $r=\varrho$ at $\theta=\pi/4$. 
Hence, $0\leq\cot(2\theta)\leq\tan(2\delta)$ 
as in the proof of Lemma~\ref{lem:dipping_down-appendix} but now the estimate holds for all $s\in[0,s_*]$. 
Thus, we may argue as we did for \eqref{eqn:20210920} to conclude that $\alpha(s)$ is arbitrarily close to $-\pi/2$ for all $s\in[0,s_*]$ provided that $\delta>0$ is chosen sufficiently small. 
In particular, this implies that the trajectory must exit the domain $B$ via $\tan(r(s_*))\cos(\theta(s_*))=1$. 
\end{proof}

Now, given these ancillary lemmata, the proof of Theorem \ref{thm:HsiangClaim} is easily obtained. Indeed, we have already shown the existence of trajectories emanating from initial data of the form $(r_0, \pi/4, -\pi/2)$ that exit from the domain $B$ at points in the interior of the sets
\[
(B\cap \left\{\alpha=0 \right\})\cup (B \cap \left\{u(r,\theta)=1, \ \alpha\geq v(r,\theta)\right\}), \ \text{and} \ (B \cap \left\{u(r,\theta)=1, \ \alpha\leq v(r,\theta)\right\})
\]
respectively for $r_0$ sufficiently small, and sufficiently close to the threshold $\varrho$. At this stage, arguing exactly as we did for the proof of Theorem \ref{thm:main} we obtain a trajectory that exits the domain $B$ at the intersection of the two sets in question, which implies the desired conclusion.

\begin{remark}\label{rem:Completeness}
We explicitly note that, as can be readily checked, apart from the Clifford torus in $\Sp^3$, the minimal embeddings of $T^3$ in $\Sp^4$ and of $T^4$ in $\Sp^5$ are the only ones that can be obtained via equivariant methods relying on the outer direct sum of orthogonal groups determining a cohomogeneity two action.
\end{remark}


\begin{bibdiv}
\begin{biblist}

\bib{Alm66}{article}{
      author={Almgren, F.~J., Jr.},
       title={Some interior regularity theorems for minimal surfaces and an
  extension of {B}ernstein's theorem},
        date={1966},
     journal={Ann. of Math. (2)},
      volume={84},
       pages={277\ndash 292},
}

\bib{Angenent1992}{incollection}{
      author={Angenent, Sigurd~B.},
       title={Shrinking doughnuts},
        date={1992},
   booktitle={Nonlinear diffusion equations and their equilibrium states, 3
  ({G}regynog, 1989)},
      series={Progr. Nonlinear Differential Equations Appl.},
      volume={7},
   publisher={Birkh\"{a}user Boston, Boston, MA},
       pages={21\ndash 38},
}

\bib{BdGG69}{article}{
      author={Bombieri, E.},
      author={De~Giorgi, E.},
      author={Giusti, E.},
       title={Minimal cones and the {B}ernstein problem},
        date={1969},
     journal={Invent. Math.},
      volume={7},
       pages={243\ndash 268},
}

\bib{Brendle2013}{article}{
      author={Brendle, Simon},
       title={Embedded minimal tori in {$S^3$} and the {L}awson conjecture},
        date={2013},
     journal={Acta Math.},
      volume={211},
      number={2},
       pages={177\ndash 190},
}

\bib{Bryant1982}{article}{
      author={Bryant, Robert~L.},
       title={Conformal and minimal immersions of compact surfaces into the
  {$4$}-sphere},
        date={1982},
     journal={J. Differential Geometry},
      volume={17},
      number={3},
       pages={455\ndash 473},
}

\bib{Calabi1967}{article}{
      author={Calabi, Eugenio},
       title={Minimal immersions of surfaces in {E}uclidean spheres},
        date={1967},
     journal={J. Differential Geom.},
      volume={1},
       pages={111\ndash 125},
}

\bib{ChoeFra18}{article}{
      author={Choe, Jaigyoung},
      author={Fraser, Ailana},
       title={Mean curvature in manifolds with {R}icci curvature bounded from
  below},
        date={2018},
     journal={Comment. Math. Helv.},
      volume={93},
      number={1},
       pages={55\ndash 69},
}

\bib{Drugan2018}{article}{
      author={Drugan, Gregory},
      author={Lee, Hojoo},
      author={Nguyen, Xuan~Hien},
       title={A survey of closed self-shrinkers with symmetry},
        date={2018},
        ISSN={1422-6383},
     journal={Results Math.},
      volume={73},
      number={1},
       pages={Paper No. 32, 32},
}

\bib{HsiHsi80}{article}{
      author={Hsiang, Wu-Teh},
      author={Hsiang, Wu-Yi},
       title={Examples of codimension-one closed minimal submanifolds in some
  symmetric spaces. {I}},
        date={1980},
     journal={J. Differential Geometry},
      volume={15},
      number={4},
       pages={543\ndash 551},
}

\bib{HsiHsi82}{article}{
      author={Hsiang, Wu-Teh},
      author={Hsiang, Wu-Yi},
       title={On the existence of codimension-one minimal spheres in compact
  symmetric spaces of rank {$2$}. {II}},
        date={1982},
     journal={J. Differential Geometry},
      volume={17},
      number={4},
       pages={583\ndash 594 (1983)},
}

\bib{Hsiang1983}{article}{
      author={Hsiang, Wu-Yi},
       title={Minimal cones and the spherical {B}ernstein problem. {I}},
        date={1983},
     journal={Ann. of Math. (2)},
      volume={118},
      number={1},
       pages={61\ndash 73},
}

\bib{Hsiang1987}{article}{
      author={Hsiang, Wu-Yi},
       title={On the construction of infinitely many congruence classes of
  imbedded closed minimal hypersurfaces in {$S^n(1)$} for all {$n\geq 3$}},
        date={1987},
     journal={Duke Math. J.},
      volume={55},
      number={2},
       pages={361\ndash 367},
}

\bib{HsiLaw71}{article}{
      author={Hsiang, Wu-Yi},
      author={Lawson, H.~Blaine, Jr.},
       title={Minimal submanifolds of low cohomogeneity},
        date={1971},
     journal={J. Differential Geometry},
      volume={5},
       pages={1\ndash 38},
}

\bib{HsiSte86}{article}{
      author={Hsiang, Wu-Yi},
      author={Sterling, Ivan},
       title={Minimal cones and the spherical {B}ernstein problem. {III}},
        date={1986},
     journal={Invent. Math.},
      volume={85},
      number={2},
       pages={223\ndash 247},
}

\bib{IriMarNev18}{article}{
      author={Irie, Kei},
      author={Marques, Fernando~C.},
      author={Neves, Andr\'{e}},
       title={Density of minimal hypersurfaces for generic metrics},
        date={2018},
     journal={Ann. of Math. (2)},
      volume={187},
      number={3},
       pages={963\ndash 972},
}

\bib{KiNak87}{article}{
      author={Ki, U-Hang},
      author={Nakagawa, Hisao},
       title={A characterization of the {C}artan hypersurface in a sphere},
        date={1987},
     journal={Tohoku Math. J. (2)},
      volume={39},
      number={1},
       pages={27\ndash 40},
}

\bib{Lawson1970}{article}{
      author={Lawson, H.~Blaine},
       title={Complete minimal surfaces in {$S^3$}},
        date={1970},
     journal={Ann. of Math. (2)},
      volume={92},
       pages={335\ndash 374},
}

\bib{LioMarNev18}{article}{
      author={Liokumovich, Yevgeny},
      author={Marques, Fernando~C.},
      author={Neves, Andr\'{e}},
       title={Weyl law for the volume spectrum},
        date={2018},
     journal={Ann. of Math. (2)},
      volume={187},
      number={3},
       pages={933\ndash 961},
}

\bib{MarNev17}{article}{
      author={Marques, Fernando~C.},
      author={Neves, Andr\'{e}},
       title={Existence of infinitely many minimal hypersurfaces in positive
  {R}icci curvature},
        date={2017},
     journal={Invent. Math.},
      volume={209},
      number={2},
       pages={577\ndash 616},
}

\bib{MarNevSon19}{article}{
      author={Marques, Fernando~C.},
      author={Neves, Andr\'{e}},
      author={Song, Antoine},
       title={Equidistribution of minimal hypersurfaces for generic metrics},
        date={2019},
     journal={Invent. Math.},
      volume={216},
      number={2},
       pages={421\ndash 443},
}

\bib{Sha17}{article}{
      author={Sharp, Ben},
       title={Compactness of minimal hypersurfaces with bounded index},
        date={2017},
     journal={J. Differential Geom.},
      volume={106},
      number={2},
       pages={317\ndash 339},
}

\bib{Sol90}{article}{
      author={Solomon, Bruce},
       title={The harmonic analysis of cubic isoparametric minimal
  hypersurfaces. {I}. {D}imensions {$3$} and {$6$}},
        date={1990},
     journal={Amer. J. Math.},
      volume={112},
      number={2},
       pages={157\ndash 203},
}

\bib{Son18}{article}{
      author={Song, Antoine},
       title={Existence of infinitely many minimal hypersurfaces in closed
  manifolds},
        date={2023},
     journal={Ann. of Math. (2)},
      volume={197},
      number={3},
       pages={859\ndash 895},
}

\bib{Zhou20}{article}{
      author={Zhou, Xin},
       title={On the multiplicity one conjecture in min-max theory},
        date={2020},
     journal={Ann. of Math. (2)},
      volume={192},
      number={3},
       pages={767\ndash 820},
}

\end{biblist}
\end{bibdiv}

\end{document}